\DeclareMathAlphabet{\mathpzc}{OT1}{pzc}{m}{it}
\newtheorem{deff}{Definition}[chapter]
\newtheorem{theo}[deff]{Theorem}
\newtheorem{prop}[deff]{Proposition}
\newtheorem{lem}[deff]{Lemma}
\newtheorem{cor}[deff]{Corollary}
\newtheorem{exam}[deff]{Example}
\newtheorem{ques}[deff]{Question}
\newtheorem{rem}[deff]{Remark}
\def\L{\mathscr{L}}
\begin{document}

\author{C.A. Morales}
\title{Recent Topics on Linear Dynamics}

%\subtitle{-- Monograph --}
\date{2025}
\maketitle

\frontmatter%%%%%%%%%%%%%%%%%%%%%%%%%%%%%%%%%%%%%%%%%%%%%%%%%%%%%%

\tableofcontents

\begin{preface}

\noindent
Recent advancements in this field have paved the way for an exploration of intricate phenomena that characterize the behavior of dynamical systems. This book, delves into key concepts and emerging trends in the study of linear dynamical systems, offering a comprehensive overview of some of the most significant developments in this dynamic and evolving field.

In Chapter 1, we explore some basic facts from the Banach contracting
principle to the equivalence between hyperbolicity, shadowing and expansivity for invertible operators on Banach spaces.

In Chapter 2, we embark on the fundamental principles that underlie the understanding of hyperbolicity, expansivity, and shadowing phenomena within linear dynamical systems. This chapter serves as a foundational exploration of crucial concepts, providing readers with the necessary groundwork to delve deeper into subsequent chapters.

Chapter 3, extends the discussion beyond the traditional realms of hyperbolicity, exploring the nuances and implications of generalized hyperbolic obehavior in linear dynamical systems \cite{cgp}. This chapter illuminates the broader spectrum of dynamics that arise when considering generalized hyperbolic structures, opening new avenues for research and discovery.

In Chapter 4, the focus shifts to a critical aspect of dynamical systems—their stability under perturbations. Structural stability is a key theme, and the chapter provides insights into the conditions that ensure the persistence of qualitative properties in the face of small perturbations, a crucial consideration in the study of linear dynamics. We present a proof of the Bernardes-Messaoudi Stability theorem about the structural stability of generalized hyperbolic operators \cite{bm1}.

The exploration continues in Chapter 5, where the attention turns to the intricate dynamics surrounding homoclinic points. This chapter delves into the rich behavior exhibited by systems near homoclinic points, shedding light on their significance and implications in the broader context of linear dynamics.

Chapter 6, presents a captivating departure from conventional topics. The exploration of a peculiar operator and its applications widens the scope of linear dynamics, revealing unexpected connections and potential applications in diverse scientific and engineering fields.

Throughout this book, the aim is to provide a cohesive and accessible resource for researchers, academics, and students interested in the cutting-edge developments within linear dynamics. Each chapter is designed to contribute to a deeper understanding of the intricate dynamics exhibited by linear systems and to inspire further exploration into the fascinating realm of linear dynamics. As we embark on this journey, it is our hope that this monograph serves as both a guide and a catalyst for continued advancements in this dynamic and vibrant field of study.

The author penned these notes while visiting Chungnam National University (CNU) in Daejeon, South Korea, in 2022 and the Vietnam Institute for Advanced Study in Mathematics (VIASM) from December 2023 to January 2024.
They serve as supplementary material for lectures on recent topics in linear dynamics by the author at the University of Da Nang in Da Nang, Vietnam, in January 2024. The author extends gratitude to CNU and VIASM for their warm hospitality and generous financial support. Additionally, appreciation is expressed to the University of Da Nang for providing the opportunity to deliver the lectures from which this textbook arose.

\vspace{100pt}

\noindent
VIASM, CNU \& The University of Da Nang, January 2024 \hfill  C.A.M.

\end{preface}

\mainmatter%%%%%%%%%%%%%%%%%%%%%%%%%%%%%%%%%%%%%%%%%%%%%%%%%%%%%%%

\chapter{Basics}

\noindent

\noindent
Let $X$ be a metric space. We say that $X$ is separable if it has a dense sequence.
A sequence $(x_n)_{n\in\mathbb{N}}$ is {\em Cauchy} if
$$
\lim_{m,n\to\infty}d(x_n,x_m)=0.
$$
More precisely, for every $\epsilon>0$ there is $N\in\mathbb{N}$ such that
$d(x_n,x_m)<\epsilon$ for all $m,n\geq N$.
We say that $X$ is complete if every Cauchy sequence of $X$ is convergent i.e. there is $p\in X$ such that
$$
\lim_{n\infty}x_n=p.
$$
\begin{definition}
A map $f:X\to X$ is a {\em contraction} if there is $0<\lambda<1$ such that
\begin{equation}
\label{votar}
d(f(x),f(y))\leq \lambda d(x,y),\quad\quad\forall x,y\in X.
\end{equation}
\end{definition}
A {\em fixed point} of $f:X\to X$ is a point $x\in X$ such that $f(x)=x$.
The set of fixed points of $f$ is denoted by $Fix(f)$.

\begin{theo}[Banach Contracting Principle]
Every contracting map of a complete metric space has a fixed point.
\end{theo}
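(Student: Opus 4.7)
The plan is to produce a fixed point by iterating $f$ from an arbitrary starting seed, showing that the resulting orbit is Cauchy, and then passing to the limit. Fix any $x_0 \in X$ and define $x_n = f^n(x_0)$ inductively by $x_{n+1} = f(x_n)$. The first step is to obtain the key geometric decay estimate $d(x_{n+1}, x_n) \leq \lambda^n d(x_1, x_0)$, which follows from iterating the contraction inequality (\ref{votar}) $n$ times.

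Next, I would use the triangle inequality together with this bound to control $d(x_n, x_m)$ for $m > n$: summing gives $d(x_n, x_m) \leq (\lambda^n + \lambda^{n+1} + \cdots + \lambda^{m-1}) d(x_1, x_0) \leq \frac{\lambda^n}{1-\lambda} d(x_1, x_0)$. Since $0 < \lambda < 1$, the right-hand side tends to $0$ as $n \to \infty$, so $(x_n)$ is Cauchy. By completeness of $X$, there exists $p \in X$ with $x_n \to p$.

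To conclude that $p$ is a fixed point, I would note that the contraction condition (\ref{votar}) makes $f$ Lipschitz, hence continuous. Therefore $f(p) = f(\lim_n x_n) = \lim_n f(x_n) = \lim_n x_{n+1} = p$. Although the statement only claims existence, it is natural to also observe uniqueness: if $p, q$ were two fixed points, then $d(p,q) = d(f(p),f(q)) \leq \lambda d(p,q)$, forcing $d(p,q) = 0$ since $\lambda < 1$.

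None of the steps presents a real obstacle, but the one requiring the most care is the passage from the per-step estimate $d(x_{n+1},x_n) \leq \lambda^n d(x_1,x_0)$ to the Cauchy property, where one must correctly sum the geometric tail uniformly in $m$; this is what makes completeness applicable and drives the whole argument.
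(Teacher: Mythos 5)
Your proof is correct and follows essentially the same route as the paper's: iterate from an arbitrary point, derive the geometric estimate $d(x_{n+1},x_n)\leq\lambda^n d(x_1,x_0)$, sum the geometric tail to get the Cauchy property, invoke completeness, and pass to the limit using continuity of $f$. The added uniqueness observation is a correct bonus not required by the statement.
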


\begin{proof}
Let $0<\lambda<1$ be satisfying \eqref{votar}.
Fix $x\in X$.

For every $n\geq0$ we define $f^0=id_X$ (the identity of $X$) and
$f^n=f\circ\overset{(n)}{\cdots}\circ f$ for $n\in\mathbb{N}$.
It follows that
$$
d(f^n(x),f^n(y))\leq \lambda d(f^{n-1}(x),f^{n-1}(x))\leq\overset{(n)}{\cdots} \leq \lambda^nd(x,y),
\quad\forall x,y\in X,\, n\in\mathbb{N}.
$$
Now, fix $x\in X$.
Since for all $m>n$ one has
\begin{eqnarray*}
d(f^m(x),f^n(x))&\leq& d(f^m(x),f^{n+1}(x))+d(f^{n+1}(x),f^n(x))\\
&\leq& d(f^m(x),f^{n+2}(x))+d(f^{n+2}(x),f^{n+1}(x))+\\
& & d(f^{n+1}(x),f^n(x))\\
&\vdots& \\
&\leq& \sum_{k=0}^{m-n-1}d(f^{n+k+1}(x),f^{n+k}(x))\\
&\leq&\left( \sum_{k=0}^{m-n-1}\lambda^{n+k}\right)d(f(x),x)\\
&<&\lambda^n\left( \sum_{k=0}^\infty\lambda^{k}\right)d(f(x),x)\\
&=& \frac{\lambda^n}{1-\lambda}d(f(x),x)\to0\quad\quad\mbox{ as }\quad\quad n\to\infty
\end{eqnarray*}
thus
$$
\lim_{n,m\to\infty}d(f^m(x),f^n(x))=0
$$
hence $(f^n(x))_{n\geq0}$ is Cauchy. Since $X$ is complete, there is $p\in X$ such that
$$
\lim_{n\to\infty}f^n(x)=p.
$$
Since $f$ is contracting, $f$ is continuous.
So,
$$
f(p)=f(\lim_{n\to\infty}f^n(x))=\lim_{n\to\infty}f^{n+1}(x)=p
$$
thus $p\in Fix(f)$ completing the proof.
\qed
\end{proof}

Note that if $f$ is a contraction of a complete metric space, then the above proof says that there is $p\in Fix(f)$ such that
$$
\lim_{n\to \infty}f^n(x)=p,\quad\quad\forall x\in X.
$$
In other words, the positive orbit $(f^n(x))_{n\geq0}$ of every point $x\in X$
converges to $p$.
Therefore, the point $p$ is an {\em attractor} in the sense that it
{\em attracts} the {\em positive orbit} of every point.

Now, we introduce some dynamical formalism.
Let $f:X\to X$ be a map of a metric space.
We say that $x\in X$ is
\begin{itemize}
\item
Fixed if $f(x)=x$;
\item
Periodic if $f^k(x)=x$ for some $k\in\mathbb{N}$;
\item
{\em Nonwandering} if for every neighborhood $U$ of $x$ there is $n\in\mathbb{N}$ such that $f^n(U)\cap U\neq\emptyset$;
\item
{\em Chain recurrent} if for every $\delta>0$ there is a sequence
$(x_i)_{i=0}^k$ such that $x_0=x=x_k$ and $d(f(x_i),x_{i+1})<\delta$ for all $0\leq i\leq k-1$.
\end{itemize}

The corresponding sets are denoted by $Fix(f)$, $Per(f)$, $\Omega(f)$ and $CR(f)$.
The following relationships between these sets holds:
$$
Fix(f)\subset Per(f)=\bigcup_{n\in\mathbb{N}}Fix(f^n)\subset \Omega(f)\subset CR(f).
$$
None of the reversed of the above inclusions holds.
We also have for any $x\in X$,

\begin{itemize}
\item
Positive orbit: $(f^n(x))_{n\geq0}$;
\item
Negative orbit (when $f$ invertible): $(f^{-n}(x))_{n\geq0}$ where $f^{-n}=(f^{-1})^n$;
\item
Orbit (when $f$ invertible): $(f^n(x))_{n\in\mathbb{Z}}$.
\end{itemize}

Of fundamental importance is the knowledge of the limit points of the orbits of $x$. In this vein, we define
$\omega(x)$ as the limit points of the positive orbit of $x$ namely
$$
\omega(x)=\{y\in X:y=\lim_{i\to\infty}f^{n_i}(x)\mbox{ for some sequence }n_i\to\infty\}.
$$
In the invertible case we define $\alpha(x)$ as the $\omega$-limit set of $f$ under the negative orbit.

The Banach principle can be reformulated as follows:

If $fX\to X$ is a contraction of a complete metric space, then there is $p\in X$ such that
$Fix(f)=\Omega(f)=\{p\}$ and $\omega(x)=\{p\}$ for all $x\in X$.

In other words, we know what will happen with any orbit:
They will be attracted to a fixed point.
This kind of result is the prototype of what we search in dynamics.
More precisely, to understand the asymptotic behavior of the orbits or, more precisely, to determinate the structure of the omega-limit sets, or, the nonwandering or chain-recurrent points.

One more structure in dynamics is as follows.

\begin{itemize}
\item
We say that $x\in X$ is a homoclinic point of a map $f:X\to X$, associated to $p\in X$, if
$$
\lim_{n\to\pm\infty}f^n(x)=p.
$$
\end{itemize}
The set of homoclinic points associated to $p$ is denoted by $H_f(p)$.
Clearly $p\in Fix(f)$ $\iff$ $H_f(p)\neq\emptyset$.

\begin{definition}
We say that $f:X\to X$ is:
\begin{itemize}
\item
{\em Expansive}: There is $\epsilon>0$ (called expansivity constant) such that if
$d(f^n(x)-f^n(y)\leq\epsilon$ for all $n\in\mathbb{Z}$ $\Longrightarrow$ $x=y$.
\item
We say that $f:X\to X$ is {\em shadowing} (or that has the {\em shadowing property}) if
for every $\epsilon>0$ there is $\delta>0$ such that for every sequence
$(x_n)_{n\in\mathbb{Z}}$ with $d(f(x_n),x_{n+1})\leq\delta$ for all $n\in\mathbb{Z}$ (called {\em $\delta$-pseudo orbit}) there is
$x\in X$ such that $d(f^n(x),x_n)\leq\epsilon$ for all $n\in\mathbb{Z}$.
\end{itemize}
\end{definition}

Next, we make some comments about the origin and motivations of the Linear Dynamics Theory.

We say that $X$ is a {\em Banach space} over $K=\mathbb{R}$ or $\mathbb{C}$ if it is a vector space over $K$ equipped with a norm $\|\cdot\|$ whose induced metric is complete.

\begin{example}
$X=\mathbb{C}^n$ with the norm $\|z\|=\sqrt{|z_1|^2+\cdots+|z_n|^2}$ or, for a given $p\in [0,\infty)$
$X=l^p(\mathbb{C})=\{z=(z_i)_{i\geq0}:z_i\in\mathbb{C}$ and $\sum_{i\geq0}|z_i|^p<\infty\}$ with the norm $\|z\|=(\sum_{i\geq0}|z_i|^p)^{\frac{1}p}$.
Similarly $X=\mathbb{R}^n$ and $l^p(\mathbb{R})$.
\end{example}

\begin{example}
Let $(X,\mu)$ be a measure space. Given $p\in [1,\infty)$ we define
$$
L^p=L^p(X,\mu)=\{f:X\to \mathbb{R}:f\mbox{ is measurable and }\int|f|^pd\mu<\infty\}.
$$
This is a Banach space (with the usual sum and scalar product of functions) and the $L^p$-norm
$$
\|f\|_p=\left(\int|f|^pd\mu\right)^{\frac{1}p}.
$$
\end{example}

A Banach space is {\em Hilbert} if there is an inner product $\langle\cdot,\cdot\rangle$ such that
$$
\|x\|^2=\langle x,x\rangle,\quad\quad\forall x\in X.
$$
Of the above examples, only $\mathbb{C}^n$, $l^2$ and $L^2$ are Hilbert.

\begin{definition}
A map of a Banach space $L:X\to X$ is linear if $L(x+y)=L(x)+L(y)$ and $L(\lambda x)=\lambda L(x)$ for all $x,y\in X$ and $\lambda\in\mathbb{C}$.
\end{definition}

\begin{definition}
A linear operator of a Banach space is a linear map which is also continuous.
\end{definition}

Following properties are equivalent for every linear map of a Banach space
$L:X\to X$,

\begin{itemize}
\item
$L$ is a linear operator (i.e. continuous);
\item
$L$ is continuous at $x=0$.
\item
$L$ is {\em bounded} i.e.
there is $K\in (0,\infty)$ such that
$$
\|L(x)\|\leq K\|x\|,\quad\quad\forall x\in X.
$$
\item
$\sup_{x\neq0}\frac{\|L(x)\|}{\|x\|}<\infty$
\end{itemize}

We can then define the {\em norm} of a bounded linear operator $L:X\to X$ by
$$
\|L\|=\sup_{x\neq0}\frac{\|L(x)\|}{\|x\|}
$$
and one has
$$
\|L\|=\sup_{\|x\|=1}\|L(x)\|.
$$
The space $Op(X)$ of linear operators of a Banach space $X$ is a Banach space if endowed with the norm $\|L\|$ for $L\in Op(X)$.
We also consider those $L\in Op(X)$ such that $L^{-1}$ exists and is continuous. This is the set of {\em invertible linear operators} denoted by
$GL(X)$.

The {\em spectrum} of $L\in Op(X)$ is the set $\sigma(L)\subset \mathbb{C}$ defined by $\lambda\in \sigma(L)$ $\iff$ $\lambda-L\notin GL(X)$.
As a result, $\sigma(L)$ is precisely the set of $\lambda\in \mathbb{C}$ such that the linear operator $\lambda-L$ is not bijective i.e. not injective or not surjective. In the first case $\lambda$ is called {\em eigenvalue} of $L$.
The set of eigenvalues is denoted by $\sigma_p(L)$ (this is the so-called the point spectrum of $L$).

The origin of the Linear Dynamics can be traced back to the {\em Invariant subspace problem} (ISP):

\begin{prop}
Does every linear operator of a complex Banach space $L:X\to X$
exhibit a closed subspace $M$ different from $X$ and $\{0\}$ such that
$L(M)\subset M$?
\end{prop}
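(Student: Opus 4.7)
First I would dispose of the trivial cases. If $\dim X < \infty$, then the characteristic polynomial of $L$ splits over $\mathbb{C}$ and furnishes an eigenvector, whose one-dimensional span is a closed $L$-invariant subspace, proper whenever $\dim X \geq 2$. If $X$ is not separable, pick any $x\neq 0$ and form $M_x = \overline{\mathrm{span}}\{L^n x : n\geq 0\}$. Then $M_x$ is closed and $L$-invariant, nonzero because $x\in M_x$, and proper because $M_x$ is separable while $X$ is not. Hence the question reduces to the case where $X$ is an infinite-dimensional separable complex Banach space, and the task becomes: find some $x\neq 0$ such that $M_x \neq X$, i.e.\ a \emph{non-cyclic} vector.

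\textbf{Strategy in the reduced case.} The natural attack is to exploit extra structure of $L$. For compact operators one uses Riesz spectral theory: either $L$ has a nonzero eigenvalue, yielding a finite-dimensional eigenspace, or the resolvent decomposition produces spectral subspaces. For normal operators on Hilbert space, the spectral theorem yields spectral projections whose ranges are invariant. More generally, Lomonosov's theorem says that any operator commuting with a nonzero compact operator has a nontrivial closed hyperinvariant subspace, by means of a Schauder fixed-point argument in the commutant algebra. A plausible roadmap is therefore to try to extend Lomonosov-type commutant arguments or functional-calculus/spectral methods to wider classes, and separately to use the orbit construction above together with some geometric property of $X$ (reflexivity, cotype, etc.) to force $M_x$ to be proper for a well-chosen~$x$.

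\textbf{The main obstacle.} The main obstacle is that the statement above is the classical \emph{Invariant Subspace Problem}, not a theorem. For general complex Banach spaces the answer is negative: Enflo and Read constructed bounded linear operators on separable Banach spaces with no nontrivial closed invariant subspace. So any would-be general proof must use Banach-space structure in a way that the counterexamples defeat. For Hilbert spaces the question is still open after more than a century of work, which suggests the difficulty is conceptual rather than technical: there is no known mechanism, beyond the ad hoc ones mentioned, that produces an invariant subspace from an arbitrary bounded operator. A responsible proof proposal here is therefore a catalogue of partial positive results coupled with the honest admission that settling the full statement would be a major breakthrough; in the sequel I would at best treat the known solvable cases and then move on to the dynamical notions that motivate the rest of the book.
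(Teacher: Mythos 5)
Your treatment is correct and matches the paper's own: the statement is the Invariant Subspace Problem posed as a question, and the paper likewise handles only the non-separable case via $M=\overline{\mathrm{Span}\{L^n(x):n\geq0\}}$, then records Enflo's negative answer for separable Banach spaces and the open status on separable Hilbert spaces. Your additional remarks (finite-dimensional case, Lomonosov) are accurate supplements but do not change the substance.
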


The answer is immediately positive if $X$ is not separable.

In fact, by taking any $x\in X\setminus \{0\}$ the closure $M$ of subspace generated by the positive orbit of $x$ under $L$ namely
\begin{equation}
\label{midway}
M=\overline{Span\{L^n(x):n\geq0\}}
\end{equation}
satisfies the requirements of the ISP.

The question therefore makes sense in the separable case only.

In such a case a negative answer was given by Enflo.
However, the ISP is still open on separable Hilbert spaces.
The identity \eqref{midway} put in evidence the relationship between the dynamics of a linear operator (represented by the positive orbit) and the ISP.
Many authors have been studying the dynamics of linear operators on Banach spaces.

This is what the subject Linear Dynamics is about.
We first analyze conditions for chain recurrence, homoclinic points and topological transitivity for linear operators.

\begin{theo}
If $L\in Op(X)$ for a Banach space $X$, then $CR(L)$ is a closed subspace
and $L(CR(L))\subset CR(L)$.
\end{theo}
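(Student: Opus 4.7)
The theorem packages several assertions about $CR(L)$: it contains $0$, is closed under addition and scalar multiplication, is topologically closed, and is forward invariant under $L$. All of them follow by manipulating $\delta$-pseudo-orbits and exploiting linearity together with the Lipschitz bound $\|L(u)-L(v)\|\le\|L\|\,\|u-v\|$.

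I would begin with the easy items. Since $L(0)=0$, the sequence $(0,0)$ is a $\delta$-chain from $0$ to $0$ for every $\delta>0$, so $0\in CR(L)$. For scalar invariance, given $\lambda\ne 0$ and a $(\delta/|\lambda|)$-chain $x_0=x,\ldots,x_k=x$ for $x\in CR(L)$, the scaled sequence $\lambda x_0,\ldots,\lambda x_k$ is a $\delta$-chain from $\lambda x$ to itself because $\|L(\lambda x_i)-\lambda x_{i+1}\|=|\lambda|\,\|L(x_i)-x_{i+1}\|<\delta$. For $L$-invariance, pick an $\epsilon$-chain at $x$ with $\epsilon=\delta/\max(1,\|L\|)$ and apply $L$ termwise: the image runs from $L(x)$ to $L(x)$ with every gap bounded by $\|L\|\,\epsilon\le\delta$.

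Closure under addition is the delicate step, because independently chosen chains for $x$ and $y$ can have different lengths. Given $x,y\in CR(L)$ and $\delta>0$, fix $(\delta/2)$-chains $a_0=x,\ldots,a_k=x$ and $b_0=y,\ldots,b_m=y$. I would synchronise them by cyclic repetition: because $a_k=a_0$, adjoining a second copy of the $a$-chain remains a valid $(\delta/2)$-chain (the new wrap-around gap $\|L(a_{k-1})-a_0\|$ equals $\|L(a_{k-1})-a_k\|$, which was already controlled). Concatenating the $a$-chain $m$ times and the $b$-chain $k$ times yields chains of common length $N=km$ starting and ending at $x$ and $y$ respectively, and the termwise sum is a $\delta$-chain from $x+y$ to $x+y$ by the triangle inequality combined with linearity.

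Topological closedness is handled by a splicing argument. Given $x$ in the closure of $CR(L)$ and $\delta>0$, pick $x_n\in CR(L)$ with $\|x-x_n\|<\delta/(2+2\|L\|)$ and a $(\delta/2)$-chain $w_0=x_n,w_1,\ldots,w_l=x_n$ of length $l\ge 2$ (extend by one concatenation if $l=1$). Replacing the endpoints produces $x,w_1,\ldots,w_{l-1},x$; the interior gaps are untouched, while the two endpoint gaps pick up additional error of at most $\|L\|\,\|x-x_n\|$ and $\|x-x_n\|$, both strictly less than $\delta/2$ by the choice of $x_n$. Thus the spliced sequence is a $\delta$-chain from $x$ to $x$. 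The main obstacle throughout is the length synchronisation in the additive step; once past it, everything reduces to direct pseudo-orbit bookkeeping.
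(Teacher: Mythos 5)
Your proof is correct, and it is actually more complete than the one in the text. The paper's proof establishes only the subspace property: the scalar step is identical to yours, but for addition it pads the shorter chain with zeros, writing $x_0+y_0,\dots,x_k+y_k,0+y_{k+1},\dots,0+y_l$. As written that construction is problematic, since the transition gap $\|L(x_k+y_k)-(0+y_{k+1})\|$ picks up a term of size $\|L(x)\|$ and the final entry is $y$ rather than $x+y$; your synchronisation by cyclic repetition (exploiting that a chain returning to its starting point can be concatenated with itself at no cost, so both chains can be brought to the common length $km$) avoids this entirely and is the cleaner argument. Moreover, the paper's proof stops after the addition step and never addresses the remaining two assertions of the theorem; your termwise application of $L$ with $\epsilon=\delta/\max(1,\|L\|)$ handles the invariance $L(CR(L))\subset CR(L)$, and your endpoint-splicing argument with the tolerance $\delta/(2+2\|L\|)$ correctly establishes topological closedness, including the edge case $l=1$. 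All estimates check out, so nothing is missing.
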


\begin{proof}
Suppose that $x\in CR(L)$. If $\lambda\neq0$ is an scalar and $\epsilon>0$, there is a sequence $(x_i)_{i=0}^k$ such that $x_0=x_k=x$ and $\|L(x_i)-x_{i+1}\|\leq \frac{\epsilon}{|\lambda|}$ for all $0\leq i\leq k-1$. It follows that $(\lambda x_i)_{i=0}^k$ is a sequence
with $\lambda x_0=\lambda x_k=\lambda x$ and $\| L(\lambda x_i)-\lambda x_{i+1}\|\leq\epsilon$ for all $0\leq i\leq k$. Hence $\lambda x\in CR(L)$ for $\lambda\neq0$.
Since $0=0 x$ always belong to $CR(L)$ we obtain $Spam(x)\subset CR(L)$.

If $x,y\in CR(L)$ there are sequence, $(x_i)_{i=0}^k$ and $(y_i)_{i=0}^l$ with
$x_0=x=x_k$, $y_0=y=y_l$, $\|L(x_i)-x_{i+1}\|\leq\frac{\epsilon}2$ for $0\leq i\leq k-1$
and $\|L(y_j)-y_{j+1}\|\leq\frac{\epsilon}2$ for $0\leq j\leq l-1$. Assuming that $k\leq l$
the sequence
$$
(z_i)_{i=0}^l=\{x_0+y_0,x_1+y_1,\cdots, x_k+y_k,0+y_{k+1},\cdots, 0+y_l\}
$$
start and finish in $x+y$ and $\|L(z_i)-z_{i+1}\|\leq \epsilon$ for all $0\leq i\leq l-1$.
Therefore, $x+y\in CR(L)$.
\end{proof}

\begin{theo}
\label{guadal}
If $L\in GL(X)$ for a finite dimensional Banach space,
then $L$ has no nonzero homoclinic points associated to $0$.
\end{theo}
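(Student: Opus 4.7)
The plan is to exploit the generalized-eigenspace decomposition of $L$, which is available because $X$ is finite dimensional. Since $L \in GL(X)$, $0 \notin \sigma(L)$, so the spectrum splits disjointly as $\sigma(L) = \sigma_s \cup \sigma_c \cup \sigma_u$ according to whether $|\lambda| < 1$, $|\lambda| = 1$, or $|\lambda| > 1$. Gathering the corresponding generalized eigenspaces produces an $L$-invariant direct sum $X = X^s \oplus X^c \oplus X^u$. In finite dimension the associated projections are automatically continuous, so convergence of the iterates $L^n(x)$ is equivalent to componentwise convergence.

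The proof then reduces to three asymptotic statements. First, for every $y \in X^s$ one has $L^n(y) \to 0$ as $n \to +\infty$. Second, for every nonzero $y \in X^u$ one has $\|L^n(y)\| \to \infty$ as $n \to +\infty$. Third, for every nonzero $y \in X^c$ the iterates $L^n(y)$ do not tend to $0$, neither as $n \to +\infty$ nor as $n \to -\infty$. Granting these, decompose a homoclinic point as $x = x^s + x^c + x^u$. The hypothesis $L^n(x) \to 0$ as $n \to +\infty$ together with the three statements forces $x^c = x^u = 0$, so $x \in X^s$. The dual hypothesis $L^n(x) \to 0$ as $n \to -\infty$ is the same forward-convergence condition for $L^{-1}$, whose stable subspace is $X^u$ and whose center subspace is still $X^c$; the argument applied to $L^{-1}$ yields $x \in X^u$. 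Hence $x \in X^s \cap X^u = \{0\}$.

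The main obstacle will be verifying the center case. Restricted to a generalized eigenspace inside $X^c$ one can write $L = \lambda I + N$ with $|\lambda| = 1$ and $N$ nilpotent of index $d$, and the binomial identity
$$
L^n \;=\; \lambda^n \sum_{k=0}^{d-1} \binom{n}{k}\lambda^{-k} N^k
$$
shows that $L^n(y)$ equals a vector-valued polynomial in $n$ of degree equal to the largest $k$ with $N^k y \neq 0$, multiplied by the unit-modulus factor $\lambda^n$. Consequently $\|L^n(y)\|$ is either a positive constant (when $y \in \ker(L - \lambda I)$) or grows polynomially in $|n|$, and in either direction cannot tend to $0$. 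The stable and unstable assertions are routine consequences of the spectral radius formula $r(T) = \lim \|T^n\|^{1/n}$; the center assertion is what genuinely exploits both the invertibility of $L$ and the finite-dimensional Jordan structure.
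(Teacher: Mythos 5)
Your argument is correct, but it follows a genuinely different route from the paper's. You build the full generalized-eigenspace splitting $X=X^s\oplus X^c\oplus X^u$ and do the real work in the center part, where the Jordan identity $L^n=\lambda^n\sum_{k}\binom{n}{k}\lambda^{-k}N^k$ shows that $\|L^n(y)\|$ is a positive constant or grows polynomially in $|n|$, hence never tends to $0$ in either direction; combined with the continuity of the spectral projections this forces a homoclinic point into $X^s\cap X^u=\{0\}$. The paper instead observes that $H_L(0)$ is an $L$-invariant subspace, automatically closed in finite dimension, restricts $L$ to it, and notes that if it were nonzero it would contain an eigenvector $x_0$ with $L(x_0)=\lambda x_0$; then $\|L^n(x_0)\|=|\lambda|^n\|x_0\|$ already contradicts $L^n(x_0)\to0$ as $n\to\pm\infty$ whichever of $|\lambda|<1$, $|\lambda|=1$, $|\lambda|>1$ holds. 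That reduction-to-the-invariant-subspace trick lets the paper get by with a single genuine eigenvector and no Jordan form at all, so it is shorter; your version costs more bookkeeping but yields the sharper structural fact that forward convergence to $0$ characterizes $X^s$ and backward convergence characterizes $X^u$, i.e.\ $H_L(0)=X^s\cap X^u$. Both proofs implicitly require complex scalars (or a complexification) so that eigenvalues exist, a point neither you nor the paper makes explicit.
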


\begin{proof}
We have denoted by $H_L(0)$ the set of homoclinic points associated to $0$.
We can prove easily that $H_L(0)$ is a subspace hence closed (for $dim(X)<\infty$).
It is also invariant namelt $L(H_L(0))=H_L(0)$ thus we can assume $H_L(0)=X$.
Let $\lambda\in \sigma(L)$ hence $\lambda\in \sigma_p(L)$ for $dim(X)<\infty$.
Take $x_0\in X$ with $\|x_0\|=1$ such that $L(x_0)=\lambda x_0$.
If $|\lambda|>1$ (resp. $|\lambda|<1$) then
$\|L^n(x_0)\|=|\lambda|^n\to\infty$ as $n\to\infty$ (resp. $n\to-\infty$) which contradicts
$x_0\in H_L(0)=X$.
If $|\lambda|=1$, $\|L^n(x_0)\|=|\lambda|^n=1$ for all $n\in\mathbb{Z}$ contradicting one
more that $x_0\in H_L(0)=X$.
These contradictions complete the proof.
\qed
\end{proof}

\begin{definition}
A map of a metric space $f:X\to X$ is {\em topologically transitive}
if for all open sets $U,V\subset X$ there is $n\in\mathbb{N}$ such that
$f^n(U)\cap V\neq\emptyset$.
Topologically transitive linear operators are often called {\em hypercyclic operators}.
\end{definition}

Since Banach spaces are complete, to be topologically transitive is equivalent to the existence of dense orbits.
The natural question is if there are hypercyclic operators on finite dimensional spaces. However,

\begin{theo}
\label{paya}
There are no hypercyclic operators on finite dimensional Banach spaces.
\end{theo}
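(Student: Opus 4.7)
The plan is to derive a contradiction from an eigenfunctional of the adjoint (transpose) operator $L^*:X^*\to X^*$, exploiting the fact that every linear map on a finite dimensional complex space admits an eigenvalue.

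I will suppose for contradiction that $L\in Op(X)$ is hypercyclic with $\dim X<\infty$, and fix a hypercyclic vector $x\in X$ whose orbit $\{L^n x:n\geq 0\}$ is dense in $X$. Since $X^*$ also has finite dimension, in the complex case the operator $L^*$ possesses an eigenvalue $\lambda\in\mathbb{C}$ with a nonzero eigenvector $\phi\in X^*$. Iterating $L^*\phi=\lambda\phi$ gives
\begin{equation*}
\phi(L^n x) \;=\; (L^{*n}\phi)(x) \;=\; \lambda^n\,\phi(x), \qquad n\geq 0.
\end{equation*}
Since $\phi$ is a nonzero continuous linear functional (hence a surjection onto $\mathbb{C}$), density of $\{L^n x\}$ in $X$ would force density of $\{\lambda^n\phi(x)\}$ in $\mathbb{C}$. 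In particular, $\phi(x)\neq 0$, for otherwise the entire orbit would lie in the proper closed subspace $\ker\phi$.

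The heart of the argument is then the elementary observation that no sequence of the form $\{\lambda^n c\}_{n\geq 0}$ with $c\neq 0$ can be dense in $\mathbb{C}$: its moduli $|\lambda|^n|c|$ form a monotone sequence, so the set is confined to a countable family of circles whose radii shrink to zero (if $|\lambda|<1$), remain constant equal to $|c|$ (if $|\lambda|=1$), or diverge to infinity (if $|\lambda|>1$). In every case the set avoids some open annulus in $\mathbb{C}$, yielding the desired contradiction.

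The main obstacle I anticipate is handling a real Banach space $X$, since $L^*$ may then carry no real eigenvalue. I would overcome this by complexifying to extract an eigenvalue $\lambda\in\mathbb{C}$ of $L^*_{\mathbb{C}}$ with eigenfunctional $\phi=\phi_1+i\phi_2$ for some $\phi_1,\phi_2\in X^*$, and considering the continuous real-linear map $\Psi=(\phi_1,\phi_2):X\to\mathbb{R}^2$. When $\lambda\notin\mathbb{R}$ one checks that $\phi_1,\phi_2$ are $\mathbb{R}$-linearly independent, so $\Psi$ is surjective, and the eigenvalue relation translates into $\Psi(L^n x)=|\lambda|^n R_{n\theta}\,\Psi(x)$, where $\theta=\arg\lambda$ and $R_{n\theta}$ is planar rotation. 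Density of the orbit in $X$ would then force density of its image in $\mathbb{R}^2$, which fails because the image lies on circles of monotone radii $|\lambda|^n\|\Psi(x)\|$, the same modulus obstruction as above.
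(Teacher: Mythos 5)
Your proof is correct and follows essentially the same route as the paper: the paper also reduces the theorem to the fact that the adjoint of a hypercyclic operator has no eigenvalues, then uses an eigenfunctional $\varphi$ with $L^*\varphi=\lambda\varphi$ to map the dense orbit onto $\{\lambda^n\varphi(x)\}$, which cannot be dense in $\mathbb{C}$. You merely supply two details the paper leaves implicit — the modulus argument showing $\{\lambda^n c\}$ misses an open annulus, and the complexification needed over $\mathbb{R}$ — both of which are correct.
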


Since every linear operator of a finite-dimensional Banach space has eigenvalues, this theorem is a direct consequence of the following
lemma which is interesting by its own. We learn it from the nice book by Bayart and Matherson \cite{bm}.
Remember that the dual of a linear operator of a Banach space $L:X\to X$ is the linear operator $L^*(\varphi)=\varphi\circ L$ defined on the dual space $X^*$ i.e. the space of continuous linear mappings $\varphi:X\to \mathbb{C}$.

\begin{lem}
If $L\in Op(X)$ is hypercyclic, then its dual operator $L^*$ has no eigenvalues.
\end{lem}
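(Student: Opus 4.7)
The plan is to argue by contradiction. Suppose $L^*$ has an eigenvalue $\lambda \in \mathbb{C}$, so there is a nonzero $\varphi \in X^*$ with $L^*(\varphi) = \lambda \varphi$, i.e.\ $\varphi \circ L = \lambda \varphi$. Iterating this identity yields
\[
\varphi(L^n(x)) = \lambda^n \varphi(x), \qquad \forall x \in X,\ n \geq 0.
\]
This is the only algebraic input; everything else will come from the topology of the orbit.

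Next I would bring in hypercyclicity. Since $L$ is topologically transitive on the complete space $X$, there is some $x_0 \in X$ whose positive orbit $(L^n(x_0))_{n\geq 0}$ is dense in $X$. Because $\varphi$ is continuous and nonzero, $\varphi(X)$ is a nontrivial linear subspace of $\mathbb{C}$, hence $\varphi(X) = \mathbb{C}$, and $\varphi$ sends dense sets to dense sets (in its image). Therefore the set
\[
\{\lambda^n \varphi(x_0) : n \geq 0\} = \{\varphi(L^n(x_0)) : n \geq 0\}
\]
is dense in $\mathbb{C}$.

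The final step is to observe that no such geometric progression can be dense in $\mathbb{C}$, and I would conclude by a short case analysis on $\lambda$ and $\varphi(x_0)$. If $\varphi(x_0)=0$ or $\lambda = 0$, the set is contained in $\{0, \varphi(x_0)\}$ and is clearly not dense. If $|\lambda| < 1$, the sequence $\lambda^n \varphi(x_0)$ is bounded by $|\varphi(x_0)|$ and converges to $0$, so its range is bounded and cannot be dense. If $|\lambda| > 1$, then $|\lambda^n \varphi(x_0)| \to \infty$, so only finitely many terms lie in any bounded region, preventing density near $0$. Finally, if $|\lambda| = 1$, every term has modulus $|\varphi(x_0)|$, so the set lies on a single circle and is not dense either. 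Each case contradicts the density obtained above, so $L^*$ cannot have any eigenvalue.

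I do not anticipate a real obstacle here; the only point that requires a small care is noting that $\varphi$ nonzero implies $\varphi(X) = \mathbb{C}$ so that density in $X$ transfers to density in $\mathbb{C}$, and otherwise the proof is a direct combination of the eigenvalue relation with the obvious fact that geometric progressions in $\mathbb{C}$ are never dense.
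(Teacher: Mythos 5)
Your proof is correct and follows essentially the same route as the paper: iterate $\varphi\circ L=\lambda\varphi$ to get $\varphi(L^n(x))=\lambda^n\varphi(x)$, push the dense orbit through the continuous surjection $\varphi$ onto $\mathbb{C}$, and derive a contradiction. The only difference is that you spell out the case analysis showing a geometric progression is never dense in $\mathbb{C}$, which the paper leaves as ``absurd''; this is a welcome addition but not a different argument.
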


\begin{proof}
Otherwise, there is $\lambda\in \mathbb{C}$ such that
$L^*(\varphi)=\lambda\varphi$ for some continuous non-zero linear mapping $\varphi:X\to \mathbb{C}$.
Since $L$ is hypercyclic, there is $x\in X$ such that
$(L^n(x)_{n\in\geq0}$ is dense in $X$.
Since $L^*(\varphi)=\lambda\varphi$, we have $\varphi\circ L=\lambda \varphi$ so $\varphi(L^n(x))=\lambda^n\varphi(x)$ for every $n\geq0$.
Since $\varphi$ is continuous and $(L^n(x))_{n\geq0}$ is dense in $X$, we have that $(\lambda^n\varphi(x))_{n\geq0}$ is dense in $\varphi(X)$.
But $\varphi$ is non-zero so $\varphi$ is onto i.e. $\mathbb{C}=\varphi(X)$ thus $(\lambda^n\varphi(x))_{n\geq0}$ is dense in $\mathcal{C}$ which is absurd. This completes the proof.
\end{proof}

To find examples in infinite dimension we use:

\begin{theo}[The Hypercyclicity Criterium]
Let $L\in Op(X)$ for a Banach space $X$.
Suppose that there are two dense subsets $D,D'\subset X$ as well as
sequences of positive integers $n_k\to\infty$ and maps $S_{n_k}:D'\to X$ such that
\begin{itemize}
\item
$L^{n_k}(x)\to 0$ for every $x\in D$;
\item
$S_{n_k}(y)\to 0$ for every $y\in D'$;
\item
$L^{n_k}S_{n_k}(y)\to y$ for every $y\in D'$.
\end{itemize}
Then, $L$ is hypercyclic.
\end{theo}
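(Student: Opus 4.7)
The plan is to verify the definition of topological transitivity directly: given non-empty open sets $U,V\subset X$, I will produce an $n$ (in fact an $n_k$ from the given sequence) with $L^n(U)\cap V\neq\emptyset$. The key trick, which is the whole content of the criterion, is to build a ``witness'' point by adding together a point of $U$ (handled by the first hypothesis) and an approximate preimage of a point of $V$ (handled by the second and third hypotheses).

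First, using that $D$ is dense, pick $x\in D\cap U$, and using that $D'$ is dense, pick $y\in D'\cap V$. For each $k$, set
\[
z_k = x + S_{n_k}(y).
\]
Because $L$ is linear,
\[
L^{n_k}(z_k) = L^{n_k}(x) + L^{n_k}S_{n_k}(y).
\]

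Second, I apply the three hypotheses simultaneously. Since $S_{n_k}(y)\to 0$, we have $z_k\to x\in U$, so $z_k\in U$ for all large $k$. Since $L^{n_k}(x)\to 0$ and $L^{n_k}S_{n_k}(y)\to y$, we have $L^{n_k}(z_k)\to y\in V$, so $L^{n_k}(z_k)\in V$ for all large $k$. Picking any $k$ large enough that both containments hold yields $L^{n_k}(z_k)\in L^{n_k}(U)\cap V$, which is therefore non-empty. Hence $L$ is topologically transitive, i.e.\ hypercyclic.

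There is essentially no serious obstacle here once one has the idea of the witness $z_k = x + S_{n_k}(y)$: linearity of $L$ immediately splits $L^{n_k}(z_k)$ into the two pieces controlled by the hypotheses. The only point that deserves attention is that $U$ and $V$ are only assumed open (not, say, balls), which is handled simply by using that convergence of $z_k$ to an interior point of $U$ eventually lands in $U$, and similarly for $V$; the openness of $U$ and $V$ is the only reason the approximations ``$z_k\approx x$'' and ``$L^{n_k}(z_k)\approx y$'' suffice. No completeness of $X$ or Baire category argument is needed for this direction, which is notable since the existence of an actual hypercyclic vector (dense orbit) is then obtained from topological transitivity via the remark preceding Theorem~\ref{paya}.
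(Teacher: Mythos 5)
Your proof is correct and follows essentially the same route as the paper's: both pick $x\in U\cap D$, $y\in V\cap D'$, form the witness $x+S_{n_k}(y)$, and use linearity together with the three hypotheses to place it in $U\cap L^{-n_k}(V)$ for large $k$. Your write-up is in fact cleaner than the paper's (which contains typos such as ``$x+S_{n_k}(y)\in V$'' where $U$ is meant).
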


\begin{proof}
Take open sets $U$ and $V$ in $X$.
There are $x\in U\cap D$ and $y\in V\cap D'$.
We have $x+S_{n_k}(y)\in V$ and $T^{n_k}(x+S_{n_k}(y))\to V$ for $k$ large.
Then, $n=n_k$ and $z=x+S_{n_k}(y)$ satisfy $z\in U\cap L^{-n}(V)$
thus $L^n(U)\cap V\neq\emptyset$ completing the proof.
\end{proof}

\begin{remark}
All known hypercyclic operators satisfy HC.
The question if there are hypercyclic operators not satisfying HC is still open.
\end{remark}

We apply this criterium to the following result.

\begin{theo}[Rolewicz Example]
There is a hypercyclic operator.
\end{theo}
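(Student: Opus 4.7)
The plan is to exhibit a concrete hypercyclic operator on an infinite-dimensional separable Banach space and verify hypercyclicity via the criterion just proved. A natural candidate is Rolewicz's operator $L = \lambda B$ acting on $X = \ell^p(\mathbb{C})$ (for any fixed $p\in[1,\infty)$), where $B$ is the backward shift $B(x_0,x_1,x_2,\ldots)=(x_1,x_2,\ldots)$ and $\lambda\in\mathbb{C}$ is chosen with $|\lambda|>1$. The first step is to note that $L\in Op(X)$, since $B$ is plainly linear with $\|B\|=1$, hence $\|L\|=|\lambda|<\infty$.

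Next, the strategy is to set up the three ingredients required by the Hypercyclicity Criterion. I take both dense sets $D$ and $D'$ equal to the space $c_{00}$ of finitely supported sequences in $\ell^p$, whose density is standard. For the sequence of iterates I simply take $n_k=k$. The key creative step is the choice of the maps $S_{n_k}:D'\to X$: I define $S_n = \lambda^{-n} F^n$, where $F(x_0,x_1,\ldots)=(0,x_0,x_1,\ldots)$ is the forward shift. This is the natural right inverse to $L^n$ on $D'$, since $B F = \mathrm{id}$ gives $L^n S_n = \mathrm{id}$ on all of $X$.

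With these choices, the three hypotheses of the criterion reduce to elementary verifications. For $x\in D$ finitely supported, $B^n(x)=0$ once $n$ exceeds the maximal index in the support of $x$, so $L^n(x)=\lambda^n B^n(x)=0$ eventually, giving the first condition. For $y\in D'$, the forward shift is an isometry, so $\|S_n(y)\|=|\lambda|^{-n}\|y\|\to 0$ because $|\lambda|>1$, giving the second. The third, $L^{n}S_{n}(y)=y$, is exact (not just a limit) from the relation above. Invoking the Hypercyclicity Criterion then yields that $L$ is topologically transitive on $\ell^p$.

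I do not expect a real obstacle in this argument: the content is concentrated in the correct choice of $S_n$ as the amplified forward shift, which is tailor-made so that $L^n S_n = \mathrm{id}$ while the amplification factor $\lambda^{-n}$ shrinks $S_n(y)$. The only point to be a little careful about is that one must work on an infinite-dimensional space, which is forced on us by Theorem~\ref{paya}; the shift provides exactly the kind of non-trivial asymptotic behavior (orbits of compactly supported sequences going to $0$ under $L^n$) that finite-dimensional operators cannot have, and this is precisely what makes the criterion applicable.
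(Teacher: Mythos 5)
Your proposal is correct and follows essentially the same route as the paper: the paper's operator is exactly $2B$ on $\ell^2$ with $S_k=2^{-k}F^k$, which is your construction specialized to $\lambda=2$, $p=2$ (the paper takes $D'=X$ rather than $c_{00}$, but this is immaterial since $L^nS_n=\mathrm{id}$ holds everywhere). All three hypotheses of the Hypercyclicity Criterion are verified exactly as you do, so there is nothing to add.
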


\begin{proof}
Let $X=l^2$ and $L\in Op(X)$ be defined by
$$
L(\xi_0,\xi_1,\xi_2,\cdots)=(2\xi_1,2\xi_2,\cdots),\quad\quad\forall \xi=(\xi_1,\xi_2,\cdots)\in l^2.
$$
Then, apply the HC to
$$
D=\{\xi\in l^2:\xi_i=0\mbox{ except by finitely many }i\geq0\},
$$
$$
D'=X,\quad n_k=k\quad\mbox{ and }\quad
S_{n_k}:D'\to X
$$
defined by
$$
S_{n_k}(\xi)=(0,\overset{(k)}{\cdots}, 0,\frac{1}{2^k}\xi_0,\frac{1}{2^k}\xi_1,\cdots).
$$
\qed
\end{proof}

Next we define hyperbolic operators and discuss some equivalences.

\begin{definition}
\label{lockdown'}
We say that $L\in GL(X)$ is {\em hyperbolic} if $\sigma(L)\cap S^1=\emptyset$.
\end{definition}

It follows from the Riesz spectral decomposition theorem that $L$ is hyperbolic $\iff$ there is a direct sum decomposition
$X=S\oplus U$ by closed subspaces $S$ and $U$ such that
$$
L(S)=S,\quad L(U)=U,\quad r(L|_S)<1\quad\mbox{and}\quad r(L^{-1}|_U)<1
$$
where $r(P)=\sup\{|\lambda|:\lambda\in \sigma(P)\}$ is the spectral radius of operator $P$.

By Gelfand's spectral radius formula this is equivalent
to the existence of a direct sum $X=S\oplus U$ by closed subspace $S,U$ and positive constant $\lambda,K$ such that
\begin{itemize}
\item
$L(S)=S$ and $L(U)=U$.
\item
$\|L^n(x)\|\leq Ke^{-\lambda n}\|x\|$ for all $x\in S$ and $n\geq0$.
\item
$\|L^n(x)\|\geq K^{-1}e^{\lambda n}\|x\|$ for all $x\in S$ and $n\geq0$.
\end{itemize}

\begin{theo}
\label{nature}
Every hyperbolic $L\in GL(X)$ is expansive and shadowing.
\end{theo}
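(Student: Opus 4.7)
The plan is to use the hyperbolic splitting $X=S\oplus U$ (stated just before the theorem) together with the exponential estimates
$\|L^n(x)\|\le Ke^{-\lambda n}\|x\|$ on $S$ for $n\ge 0$ and $\|L^{-n}(x)\|\le Ke^{-\lambda n}\|x\|$ on $U$ for $n\ge 0$. From $L(S)=S$ and the invertibility of $L$, the relation $\|L^n(x)\|\le Ke^{-\lambda n}\|x\|$ on $S$ can be inverted to yield $\|L^{-n}(x)\|\ge K^{-1}e^{\lambda n}\|x\|$ on $S$, and symmetrically $\|L^n(x)\|\ge K^{-1}e^{\lambda n}\|x\|$ on $U$. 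I will also use that the canonical projections $\pi_s:X\to S$ and $\pi_u:X\to U$ associated with the direct sum are bounded, with norms $C_s,C_u$, because $S$ and $U$ are complementary closed subspaces of a Banach space.

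For expansivity I take any $\epsilon>0$ as the candidate expansivity constant. Suppose $\|L^n(x)-L^n(y)\|\le\epsilon$ for all $n\in\mathbb{Z}$; setting $z=x-y$ and using linearity, $\|L^n(z)\|\le\epsilon$ for every $n$. Writing $z=z_s+z_u$ with $z_s\in S$, $z_u\in U$, and applying the projections gives
$\|L^n(z_u)\|=\|\pi_u L^n(z)\|\le C_u\epsilon$ for every $n\ge 0$ and $\|L^{-n}(z_s)\|=\|\pi_s L^{-n}(z)\|\le C_s\epsilon$ for every $n\ge 0$. Combined with the exponential lower bounds $\|L^n(z_u)\|\ge K^{-1}e^{\lambda n}\|z_u\|$ and $\|L^{-n}(z_s)\|\ge K^{-1}e^{\lambda n}\|z_s\|$, letting $n\to\infty$ forces $z_u=0$ and $z_s=0$, hence $x=y$.

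For shadowing, given $\epsilon>0$ I will find $\delta>0$ as follows. For a $\delta$-pseudo-orbit $(x_n)_{n\in\mathbb{Z}}$ set $e_n=x_{n+1}-L(x_n)$, so $\|e_n\|\le\delta$. A point $x\in X$ $\epsilon$-shadows the pseudo-orbit iff $y_n:=L^n(x)-x_n$ satisfies $\|y_n\|\le\epsilon$; one checks $y_{n+1}=L(y_n)-e_n$. I propose the explicit candidate
\begin{equation*}
y_n = -\sum_{k=-\infty}^{n-1} L^{n-1-k}(\pi_s e_k) \;+\; \sum_{k=n}^{\infty} L^{n-1-k}(\pi_u e_k),
\end{equation*}
and define $x=y_0+x_0$. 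Absolute convergence of both series is immediate from the exponential estimates, and a direct computation shows the recursion $y_{n+1}=L(y_n)-e_n$ holds. Summing geometric series gives a bound of the form $\|y_n\|\le M\delta$ where $M=K(C_s+C_u)/(1-e^{-\lambda})$; so choosing $\delta=\epsilon/M$ yields $\|L^n(x)-x_n\|\le\epsilon$ for all $n$.

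The main obstacle is the bookkeeping for the shadowing part: verifying convergence of the two-sided series, checking that the candidate $y_n$ really solves the recurrence, and extracting a clean constant $M$ that does not depend on $n$ or on the particular pseudo-orbit. Expansivity, by contrast, is the routine part once one notices that boundedness of the projected forward and backward orbits, together with the exponential lower bounds, immediately trivializes each component.
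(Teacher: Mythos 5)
Your proof is correct, but it follows a genuinely different route from the one the book takes for this theorem. The book's argument is modular in the style of Ombach: it first shows that any contraction of a complete metric space has the shadowing property via the Banach fixed-point theorem applied to an auxiliary operator $\Gamma$ on the sequence space $E\subset X^{\mathbb Z}$ (Lemma \ref{lll1}), then transfers shadowing to $L|_U$ through the inverse (Lemma \ref{lll2}), and finally assembles the two pieces through the direct-sum lemma (Lemma \ref{lll3}); expansivity is handled separately as in your argument. You instead write down the shadowing point explicitly via the two-sided ``Green's function'' series
$y_n=-\sum_{k\le n-1}L^{n-1-k}(\pi_s e_k)+\sum_{k\ge n}L^{n-1-k}(\pi_u e_k)$,
which does satisfy $y_{n+1}=L(y_n)-e_n$ (the telescoping check is right), converges absolutely by the exponential estimates, and yields the uniform bound $\|y_n\|\le K(C_s+C_u)(1-e^{-\lambda})^{-1}\delta$. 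This is precisely the construction the book itself uses later, in Theorems \ref{teo} and \ref{thA-2}, to handle generalized hyperbolic operators, so you have in effect proved the stronger Chapter~3 result directly. What your approach buys is an explicit shadowableness constant and no need for the three auxiliary lemmas; what the book's approach buys is reusable machinery (the contraction-on-sequence-space trick, invariance under inverses and direct sums) that is exploited elsewhere. Two small points to tighten: (i) after defining $y_n$ by the series and setting $x=y_0+x_0$, you should say explicitly that the first-order recursion $y_{n+1}=L(y_n)-e_n$ together with the initial value $y_0$ determines the whole two-sided sequence (forward and backward induction, using invertibility of $L$), so that $y_n=L^n(x)-x_n$ for all $n$; (ii) in the expansivity part, the identity $\pi_u L^n(z)=L^n(z_u)$ needs the invariance $L(S)=S$, $L(U)=U$, which you are implicitly using and should state.
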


We split the proof in some lemmas.

\begin{lem}
\label{lili}
Every hyperbolic $L\in GL(X)$ is expansive.
\end{lem}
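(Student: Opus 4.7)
The plan is to use the spectral characterization of hyperbolicity stated just before the lemma, together with the linearity of $L$, which reduces expansivity to showing that the only orbit that stays bounded for all $n\in\mathbb{Z}$ is the zero orbit.

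First I would invoke the decomposition $X=S\oplus U$ with $L(S)=S$, $L(U)=U$, and constants $K,\lambda>0$ such that $\|L^n(x)\|\leq Ke^{-\lambda n}\|x\|$ for $x\in S$, $n\geq 0$, and $\|L^n(x)\|\geq K^{-1}e^{\lambda n}\|x\|$ for $x\in U$, $n\geq 0$. Since $S$ and $U$ are closed and complementary, the associated projections $\pi_S,\pi_U:X\to X$ are bounded; fix a constant $C>0$ with $\|\pi_S(z)\|,\|\pi_U(z)\|\leq C\|z\|$ for every $z\in X$. I claim any $\varepsilon>0$ works as expansivity constant. Indeed, suppose $\|L^n(x)-L^n(y)\|\leq\varepsilon$ for all $n\in\mathbb{Z}$. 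Setting $z=x-y$ and using linearity, $\|L^n(z)\|\leq\varepsilon$ for all $n\in\mathbb{Z}$. Writing $z=z_s+z_u$ with $z_s=\pi_S(z)\in S$ and $z_u=\pi_U(z)\in U$, and using $L(S)=S$, $L(U)=U$, we get $L^n(z_s)\in S$ and $L^n(z_u)\in U$, so $\|L^n(z_s)\|\leq C\varepsilon$ and $\|L^n(z_u)\|\leq C\varepsilon$ for all $n\in\mathbb{Z}$.

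Now I would exploit the exponential estimates on the two components in opposite time directions. For the unstable part, directly from the second estimate with $x=z_u$ we get $K^{-1}e^{\lambda n}\|z_u\|\leq \|L^n(z_u)\|\leq C\varepsilon$ for every $n\geq 0$; letting $n\to\infty$ forces $z_u=0$. For the stable part, since $L(S)=S$, for each $n\geq 0$ the vector $w_n:=L^{-n}(z_s)$ lies in $S$ and satisfies $L^n(w_n)=z_s$; the stable estimate then yields $\|z_s\|\leq Ke^{-\lambda n}\|w_n\|$, i.e. $\|L^{-n}(z_s)\|\geq K^{-1}e^{\lambda n}\|z_s\|$. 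Combined with $\|L^{-n}(z_s)\|\leq C\varepsilon$ and $n\to\infty$, this forces $z_s=0$. Hence $z=0$, i.e. $x=y$, proving expansivity.

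The step I would flag as the only point requiring mild care is passing from the spectral-decomposition constants (which are stated for $x\in S$, $n\geq 0$ and $x\in U$, $n\geq 0$) to a two-sided estimate: this is done by inverting $L|_S$ and $L|_U$ on the invariant subspaces, as above. The bounded-projection fact is what allows us to control the two components separately from a bound on the whole vector; once that is in hand, the argument is essentially a direct exponential-growth contradiction, and the expansivity constant $\varepsilon$ is arbitrary.
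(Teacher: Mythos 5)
Your proof is correct and follows essentially the same route as the paper: decompose the difference along $X=S\oplus U$, kill the unstable component by forward exponential growth and the stable component by backward exponential growth (after inverting $L|_S$). The only cosmetic difference is that you control both components via the bounded projections, whereas the paper bounds the $U$-component by the triangle inequality $\|L^n(x^U)\|\leq\|L^n(x)\|+\|L^n(x^S)\|$ and the decay of $L^n(x^S)$; both are valid.
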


\begin{proof}
Indeed, suppose $L$ is not expansive.
Then, there is some $x\neq0$ such that
the sequence $(\|L^n(x)\|)_{n\in\mathbb{Z}}$ is bounded.
We write $x=x^S+x^U\in S\oplus U$.
Since
$\|L^n(x^U)\|\leq \|L^n(x)\|+\|L^n(x^S)\|$ and $L^n(x^S)\to0$ as $n\to\infty$ (for $x^S\in S$), the sequence $(\|L^n(x^U)\|)_{n\geq0}$ is bounded.
This implies $x^U=0$.
Then, $x=x^S\in S$ so $(L^n(x^S))_{n\leq0}$ is bounded thus $x^S=0$ hence $x=0$ a contradiction. Therefore, $L$ is expansive.
\qed
\end{proof}

To prove that a hyperbolic $L\in GL(X)$ is shadowing is subtle.
We proceed as in Ombach \cite{o}.

\begin{lem}
\label{lll1}
Every contraction of a complete metric space
$f:X\to X$ has the shadowing property.
\end{lem}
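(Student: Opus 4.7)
The whole point is that shadowing for a contraction should fall out of the same telescoping geometric-sum estimate already used in the proof of the Banach contraction principle; no delicate fixed-point construction is needed. My strategy is to show that any pseudo orbit is shadowed by its own starting point $x_0$, once $\delta$ is chosen small enough for the resulting geometric series to sum below $\epsilon$.

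Concretely, given $\epsilon>0$ I would set $\delta:=(1-\lambda)\epsilon$, where $0<\lambda<1$ is the contraction constant of $f$. For any pseudo orbit $(x_n)_{n\geq 0}$ with $d(f(x_n),x_{n+1})\leq\delta$, I claim that $x:=x_0$ itself $\epsilon$-shadows it. The key estimate is proved by induction on $n$: the base case $n=0$ is trivial, and the inductive step uses the triangle inequality followed by the contraction property,
\[
d(f^{n+1}(x_0),x_{n+1})\leq d(f^{n+1}(x_0),f(x_n))+d(f(x_n),x_{n+1})\leq \lambda\,d(f^n(x_0),x_n)+\delta,
\]
which unrolls to
\[
d(f^n(x_0),x_n)\leq \delta\sum_{k=0}^{n-1}\lambda^k\leq \frac{\delta}{1-\lambda}=\epsilon.
\]

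The only point that requires comment is that the paper's definition of shadowing is stated for two-sided sequences $(x_n)_{n\in\mathbb{Z}}$, whereas a contraction need not be invertible. I would therefore interpret the lemma in the natural one-sided form, which is exactly what the estimate above delivers, and note that this is the version that will be used in the next step (combining the contracting action of $L$ on the stable subspace $S$ with the contracting action of $L^{-1}$ on the unstable subspace $U$ to produce two-sided shadowing on $X=S\oplus U$).

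The \emph{main obstacle} is essentially non-existent: the lemma is a direct repackaging of the Banach telescoping estimate, and all the content lies in the choice $\delta=(1-\lambda)\epsilon$. The only conceptual care required is to remain consistent about one-sided versus two-sided pseudo orbits in view of the possible non-invertibility of $f$.
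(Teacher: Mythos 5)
Your forward estimate is correct as far as it goes, but it proves a weaker statement than the one the paper needs. The shadowing property in this text is the \emph{two-sided} one: the pseudo orbit is indexed by $n\in\mathbb{Z}$ and the shadowing point must satisfy $d(f^n(x),x_n)\leq\epsilon$ for all $n\in\mathbb{Z}$. Downgrading to the one-sided version is not a harmless reinterpretation, because Lemma~\ref{lll3} splits a \emph{two-sided} pseudo orbit of $L$ into two-sided pseudo orbits of $L|_S$ and $L|_U$ and needs each of these to be shadowed over all of $\mathbb{Z}$; one-sided shadowing of $L|_S$ together with one-sided shadowing of $(L|_U)^{-1}$ does not recombine into two-sided shadowing of $L$. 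Moreover, your candidate point $x_0$ is simply the wrong point for the two-sided problem. Take $f(x)=x/2$ on $\mathbb{R}$, $\lambda=1/2$, $\delta=(1-\lambda)\epsilon$, and the constant pseudo orbit $x_n=2\delta$ for all $n\in\mathbb{Z}$ (indeed $|f(x_n)-x_{n+1}|=\delta$). Then $f^n(x_0)=2\delta\cdot 2^{-n}\to\infty$ as $n\to-\infty$, so $x_0$ fails to shadow, while the correct shadowing point is $0$. In general the backward orbit of any point other than the fixed point escapes under $f^{-1}$, so the shadowing point is essentially forced and is \emph{not} $x_0$.

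The paper's proof supplies exactly this missing point: it runs the Banach contraction principle on the sequence space $E=\{\xi\in X^{\mathbb{Z}}: d(\xi_i,x_i)\leq\epsilon\ \forall i\}$ with the operator $\Gamma(\xi)_i=f(\xi_{i-1})$, whose fixed point is a genuine bi-infinite orbit lying $\epsilon$-close to the pseudo orbit; unwinding the iteration, the shadowing point is $x=\lim_{k\to\infty}f^k(x_{-k})$ (a Cauchy limit since $d(f^{k+1}(x_{-k-1}),f^k(x_{-k}))\leq\lambda^k\delta$). Your telescoping estimate is the right tool for verifying that this limit works for $n\geq 0$, but without the backward-limit (or fixed-point) construction the proof has a genuine hole. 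If you prefer to avoid the sequence-space argument, prove the lemma by taking $x=\lim_{k\to\infty}f^k(x_{-k})$ directly and checking $d(f^n(x),x_n)\leq\delta/(1-\lambda)$ for every $n\in\mathbb{Z}$ by your same geometric-series bound applied to the tail starting at $-k$.
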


\noindent
\begin{proof}
Let $\lambda$ be the contracting constant of $f$.
Given $\epsilon>0$ we choose $\delta=(1-\lambda)\epsilon$.
Let $(x_i)_{i\in\mathbb{Z}}$ be a $\delta$-pseudo orbit.

Define $E=\{\xi\in X^\mathbb{Z}:d(\xi_i,x_i)\leq\epsilon\,\forall i\in\mathbb{Z}\}$.
$E$ is a complete metric space if equipped with the supremum distance
$$
D(\xi,\eta)=\sup_{i\in\mathbb{Z}}d(\xi_i,\eta_i).
$$

Define $\Gamma:E\to E$ by
$$
\Gamma(\xi)_i=f(\xi_{i-1}),\quad\quad\forall i\in\mathbb{Z}.
$$
We must prove $\Gamma$ is well-defined.
Since
$$
d(f(\xi_{i-1}),x_i)\leq d(f(\xi_{i-1}),f(x_{i-1}))+d(f(x_{i-1}),x_i)\leq \lambda\epsilon+\delta=\epsilon,
$$
we have $\Gamma(E)\subset E$.

Since
$$
D(\Gamma(\xi),\Gamma(\eta))=\sup_{i\in\mathbb{Z}}d(f(\xi_i),f(\eta_i))\leq \lambda
D(xi,\eta),
$$
for all $\xi,\eta\in E$, one get that $\Gamma$ is a contraction.
Then, $\Gamma$ has a fixed point $\xi$ by the Banach principle.
This implies
$$
f(\xi_{i-1})=\xi_i\quad\mbox{ and }\quad
d(\xi_i,x_i)\leq\epsilon,\quad\forall i\in\mathbb{Z}.
$$
The first identity above implies $\xi_i=f^i(\xi_0)$ for all $i\in\mathbb{Z}$.
Replacing in the second we get
$$
d(f^i(x),x_i)\leq\epsilon,\quad\quad\forall i\in\mathbb{Z}
$$
with $x=\xi_0$ completing the proof.
\qed
\end{proof}

\begin{lem}
\label{lll2}
Let $f:X\to X$ be a uniformly homeomorphism of a metric space.
If $f$ is shadowing, then so does $f^{-1}$.
\end{lem}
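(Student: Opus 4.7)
The plan is to transfer a pseudo orbit of $f^{-1}$ into a pseudo orbit of $f$ by time reversal, apply the shadowing of $f$, and then reverse time once more to obtain the shadowing of $f^{-1}$. The only auxiliary ingredient beyond the hypothesis is the uniform continuity of $f$, which is built into the assumption that $f$ is a uniform homeomorphism; notice that uniform continuity of $f^{-1}$ is not actually needed for this direction.

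Concretely, given $\epsilon>0$, I would first use the shadowing of $f$ to extract $\delta>0$ such that every $\delta$-pseudo orbit of $f$ is $\epsilon$-shadowed by a genuine $f$-orbit. Then, using the uniform continuity of $f$, I would pick $\delta'>0$ such that $d(a,b)\leq\delta'$ implies $d(f(a),f(b))\leq\delta$ for all $a,b\in X$. This $\delta'$ will be the modulus of shadowing for $f^{-1}$ corresponding to $\epsilon$.

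Now take any $\delta'$-pseudo orbit $(y_n)_{n\in\mathbb{Z}}$ of $f^{-1}$, so $d(f^{-1}(y_n),y_{n+1})\leq\delta'$ for all $n\in\mathbb{Z}$. Applying $f$ and using the choice of $\delta'$ gives $d(y_n,f(y_{n+1}))\leq\delta$ for every $n\in\mathbb{Z}$. Setting $z_n=y_{-n}$ reindexes this estimate as $d(f(z_n),z_{n+1})\leq\delta$, so $(z_n)_{n\in\mathbb{Z}}$ is a $\delta$-pseudo orbit of $f$. The shadowing of $f$ then provides $x\in X$ with $d(f^n(x),z_n)\leq\epsilon$ for all $n\in\mathbb{Z}$, i.e.\ $d(f^n(x),y_{-n})\leq\epsilon$. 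Substituting $k=-n$ yields $d(f^{-k}(x),y_k)\leq\epsilon$ for all $k\in\mathbb{Z}$, which is precisely the $\epsilon$-shadowing of $(y_k)$ by the $f^{-1}$-orbit of $x$.

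The proof is essentially bookkeeping once the reversal idea is identified; the only substantive step is setting up the correspondence between $\delta'$-pseudo orbits of $f^{-1}$ and $\delta$-pseudo orbits of $f$, which is precisely where the uniform continuity of $f$ is used. I expect this translation step to be the main, and arguably only, obstacle, since without uniform continuity one cannot push the estimate $\delta'$ through $f$ to obtain a uniform bound $\delta$ along the whole bi-infinite sequence.
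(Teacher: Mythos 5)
Your proof is correct and follows essentially the same route as the paper's: apply $f$ to the defining inequality of an $f^{-1}$-pseudo orbit using uniform continuity, reverse the index to obtain a pseudo orbit of $f$, shadow it, and reverse the index again. The observation that only the uniform continuity of $f$ (not of $f^{-1}$) is needed is accurate and slightly sharpens the hypothesis, but the argument is otherwise identical up to the naming of $\delta$ and $\delta'$.
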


\begin{proof}
Fix $\epsilon>0$ and $\delta'$ from the shadowing of $f$.
Since $f$ is a uniform homeomorphisms, there is $\delta>0$ such that
$$
d(a,b)\leq\delta\quad\Longrightarrow\quad d(f(a),f(b))\leq\delta'.
$$
Let $(x_i)_{i\in\mathbb{Z}}$ be a $\delta$-pseudo orbit of $f^{-1}$
i.e.
$d(f^{-1}(x_i),x_{i+1})\leq\delta$ for all $i\in\mathbb{Z}$.
Then, $d(f(x_{i+1}),x_i)\leq\delta'$ for all $i\in\mathbb{Z}$.
Defining $y_i=x_{-i}$ for $i\in\mathbb{Z}$ we get
$d(f(y_i),y_{i+1})\leq\delta$ for all $i\in\mathbb{Z}$.
Then, there is $z\in X$ such that
$d(f^i(z),y_i)\leq\epsilon$ for all $i\in\mathbb{Z}$.
So, $d(f^i(z),x_{-i})\leq\epsilon$ for all $i\in\mathbb{Z}$.
Replacing $i$ by $-i$ we get
$d((f^{-1})^i(z),x_i)\leq\epsilon$ for all $i\in\mathbb{Z}$.
Thus, $f^{-1}$ has the shadowing property completing the proof.
\qed
\end{proof}

\begin{lem}
\label{lll3}
Let $L\in GL(X)$ for a Banach space $X$.
Suppose that there is a direct sum $X=E\oplus F$ by closed subspaces $E$ and $F$ such that
$L(E)=E$ and $L(F)=F$.
Then, restricted operators $L|_E$ and $L|_F$ are shadowing $\iff$ $L$ is shadowing.
\end{lem}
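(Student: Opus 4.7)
The splitting $X=E\oplus F$ by \emph{closed} subspaces makes it a topological direct sum, so by the closed graph theorem the coordinate projections $\pi_E:X\to E$ and $\pi_F:X\to F$ are continuous linear operators with $\pi_E+\pi_F=I$. Set $C=\max\{\|\pi_E\|,\|\pi_F\|\}$. The hypothesis $L(E)=E$ and $L(F)=F$ gives $L\pi_E=\pi_E L$ and $L\pi_F=\pi_F L$, a fact I will use throughout.

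For the implication $L|_E, L|_F$ shadowing $\Rightarrow$ $L$ shadowing, fix $\epsilon>0$. Let $\delta_E,\delta_F>0$ be associated to $\epsilon/2$ for $L|_E$ and $L|_F$ respectively, and set $\delta=\min\{\delta_E,\delta_F\}/C$. Given a $\delta$-pseudo orbit $(x_n)_{n\in\mathbb{Z}}$ of $L$, apply $\pi_E$ and $\pi_F$; using $\pi_E L=L\pi_E$ one obtains
\[
\|L(\pi_E(x_n))-\pi_E(x_{n+1})\|=\|\pi_E(L(x_n)-x_{n+1})\|\leq C\delta\leq\delta_E,
\]
and analogously for $F$. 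Thus $(\pi_E(x_n))$ and $(\pi_F(x_n))$ are pseudo orbits of $L|_E$ and $L|_F$, respectively, and each is $(\epsilon/2)$-shadowed by some $e\in E$, $f\in F$. Then $x=e+f$ satisfies $\|L^n(x)-x_n\|\leq\|L^n(e)-\pi_E(x_n)\|+\|L^n(f)-\pi_F(x_n)\|\leq\epsilon$ for all $n\in\mathbb{Z}$.

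For the converse, fix $\epsilon>0$ and let $\delta>0$ correspond to $\epsilon/C$ in the shadowing of $L$. If $(e_n)_{n\in\mathbb{Z}}\subset E$ is a $\delta$-pseudo orbit of $L|_E$, it is trivially a $\delta$-pseudo orbit of $L$, so there is $x\in X$ with $\|L^n(x)-e_n\|\leq\epsilon/C$ for all $n$. Applying $\pi_E$ and using $\pi_E(e_n)=e_n$,
\[
\|L^n(\pi_E(x))-e_n\|=\|\pi_E(L^n(x)-e_n)\|\leq\epsilon,
\]
so $\pi_E(x)\in E$ shadows $(e_n)$. The argument for $L|_F$ is identical. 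This establishes the equivalence.

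The only real subtlety is recognizing that the algebraic direct sum by \emph{closed} subspaces yields bounded projections that commute with $L$; once that is in hand the constants from the projection norms are absorbed by an initial rescaling of $\epsilon$ or $\delta$, and there is no further obstacle.
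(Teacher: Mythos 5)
Your proof is correct and follows essentially the same route as the paper: bounded projections onto the closed summands that commute with $L$, splitting the pseudo-orbit for the forward direction, and projecting the shadowing point for the converse. The only cosmetic difference is that in the converse you absorb the projection norm $C$ by shrinking $\epsilon$, whereas the paper renorms $X$ with the max norm to make $C=1$; both are fine.
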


\begin{proof}
($\Longrightarrow$).
Every $x\in X$ writes uniquely as
$x=x^E\oplus x^F\in E\oplus F$.
Since the sum $X=E\oplus F$ is direct, there is a positive constant $\beta$ such that
$\|x^S\|\leq \beta\|x\|$ and $\|x^U\|\leq\beta\|x\|$ for all $x\in X$.

Now, let $\epsilon>0$ and $\delta>0$ be given from the shadowing of both $L|_E$ and $L|_F$
for $\frac{\epsilon}{2}$.
Let $(x_i)_{i\in\mathbb{Z}}$ be a $\frac{\delta}{\beta}$-pseudo orbit of $L$.
Since $L(E)=E$, $L(x^E_i)-x^E_{i+1}=(L(x_i)-x_{i+1})^E$ so
$$
\|L(x^E_i)-x^E_{i+1}\|\leq \beta\|L(x_i)-x_{i+1}\|\leq\beta\frac{\delta}{\beta}= \delta,\quad\quad\forall i\in\mathbb{Z},
$$
thus there is $x^E\in E$ such that
$\|L^i(x^E)-x^E_i\|\leq\frac{\epsilon}2$ for all $i\in\mathbb{Z}$.
Likewise, there is $x^F\in F$ such that
$\|L^i(x^F)-x^F_i\|\leq\frac{\epsilon}2$ for all $i\in\mathbb{Z}$.

Then, $x=x^E\oplus x^F$ satisfies
$$
\|L(x)-x_i\|\leq\|L^i(x^E)-x^E_i\|+\|L^i(x^F)-x^F_i\|\leq\frac{\epsilon}2+\frac{\epsilon}2=\epsilon,\quad\quad\forall i\in\mathbb{Z}
$$
completing the proof.

To prove ($\Longleftarrow$)
denote by $L^E=L|_E$ and $L^F=L|_F$ the restrictions of $L$ to $E$ and $F$ respectively.
We only need to prove that $L^E$ has the shadowing property.
By passing to projections if necessary we can assume
\begin{equation}
\label{anis}
\|x\|=\max\{\|x^E\|,\|x^F\|\}\quad\mbox{ whenever }\quad x=x^E\oplus x^F\in M\oplus N.
\end{equation}
Let $\epsilon>0$ and $\delta>0$ be given by the shadowing property of $L$.
Let $(x^E_i)_{i\in\mathbb{Z}}$ be a $\delta$-PO of $L^E$.
Then, there is $x\in X$ such that
\begin{equation}
\label{daryl}
\|L^i(x)-x^E_i\|\leq \epsilon\quad\quad i\in\mathbb{Z}.
\end{equation}
Writing $x=x^E+x^F$ one has
$$
\|(L^E)^i(x^E)-x^E_i\|\overset{\eqref{anis}} {\leq}\max\{\|L^i(x^E)-x^E_i\|,\|L^i(x^F)\|\}=
\|L^i(x)-x^E_i\|\leq\epsilon,
$$
$\forall i\in\mathbb{Z}$
proving that $L^E$ has the shadowing property.
\qed
\end{proof}

\begin{proof}[of Theorem \ref{nature}]
Let $L\in GL(X)$ be hyperbolic.
By Lemma \ref{lili} we have that $L$ is expansive.
It remains to prove that $L$ is shadowing.

We have the direct sum $X=S\oplus U$ given by the hyperbolicity of $L$.
Since $L$ contracts $S$, $L|_S$ is shadowing by Lemma \ref{lll1}.
Likewise $(L|_U)^{-1}$ contracting so shadowing by Lemma \ref{lll1} thus $L|_U$ is shadowing too by Lemma \ref{lll2}.
We conclude that both $L|_S$ and $L|_U$ are shadowing hence $L$ is by Lemma \ref{lll3}.
This completes the proof.
\qed
\end{proof}

\begin{remark}
In the above proof, $L|_S$ is not really a contraction but an eventual contraction. However, either we can repeat the shadowing proof to this case or else we can change the norm to an equivalent one to turn $L|_S$ into a contraction. Similarly for $L^{-1}|_U$.
\end{remark}

The converse of Theorem \ref{nature} is also true by a recent important result by Bernardes and Messaoudi (to be explained in the next chapter).
More precisely, every expansive and shadowing operator
$L\in GL(X)$ is hyperbolic.

We will finish this chapter by explaining a stronger fact in the finite-dimensional case. More precisely, that hyperbolicity, shadowing and expansivity are all equivalent.

To start with, we define
$$
E^c=\{x\in \sup_{n\in\mathbb{Z}}\|L^n(x)\|<\infty\}.
$$

We have the following elementary result.

\begin{lem}
\label{ele}
$L\in GL(X)$ is expansive $\iff$ $E^c=\{0\}$.
\end{lem}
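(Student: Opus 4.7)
The plan is to use linearity of $L$ in both directions, with the standard trick that differences and scalar multiples of bounded orbits remain bounded.

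For the ($\Longleftarrow$) direction I would argue by contrapositive. Suppose $L$ is not expansive; then for any $\epsilon>0$ (in particular, for $\epsilon=1$) there exist $x\neq y$ in $X$ with $\|L^n(x)-L^n(y)\|\leq\epsilon$ for every $n\in\mathbb{Z}$. Setting $z=x-y$ and using the linearity of $L^n$, we obtain $\|L^n(z)\|\leq\epsilon$ for all $n\in\mathbb{Z}$, so $z\in E^c$ and $z\neq0$. This forces $E^c\neq\{0\}$.

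For the ($\Longrightarrow$) direction, let $\epsilon>0$ be an expansivity constant of $L$. Take any $x\in E^c$ and set $M=\sup_{n\in\mathbb{Z}}\|L^n(x)\|<\infty$. If $M=0$ then $x=0$ and we are done; otherwise choose any scalar $\lambda\in(0,\epsilon/M]$. By linearity, $\|L^n(\lambda x)\|=\lambda\|L^n(x)\|\leq\lambda M\leq\epsilon$ for every $n\in\mathbb{Z}$. Since $0$ is a fixed point of $L$, this reads $\|L^n(\lambda x)-L^n(0)\|\leq\epsilon$ for all $n\in\mathbb{Z}$, and expansivity gives $\lambda x=0$, hence $x=0$. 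Therefore $E^c=\{0\}$.

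There is no real obstacle here; the only thing to notice is that the trivial step in the contraction argument is the scalar rescaling. This is precisely the feature that makes linear expansivity much more rigid than its nonlinear counterpart: a single bounded two-sided orbit can be shrunk into the expansivity ball, so expansivity translates into the algebraic statement $E^c=\{0\}$.
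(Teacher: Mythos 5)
Your proof is correct and follows essentially the same route as the paper: rescale a bounded orbit into the expansivity ball for the forward direction, and use linearity to reduce a pair of $\epsilon$-close orbits to a single bounded orbit for the converse. The only difference is cosmetic — you make explicit the reduction $z=x-y$ and the case $M=0$, both of which the paper leaves implicit.
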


\begin{proof}
If $L$ is expansive, let $\epsilon$ be given in the definition.
If $x\in E^c$, we have $\sup_{n\in\mathbb{Z}}\|L^n(x)\|<M$ for some constant $M$
thus
$$
\|L^n(\frac{\epsilon x}M)\|\leq \epsilon,\quad\quad\forall n\in\mathbb{Z}
$$
thus $\frac{\epsilon x}M=0$ hence $x=0$ proving $E^c(x)=\{0\}$.
Conversely, if $E^c=\{0\}$ and $\|L^n(x)\|<1$ for all $n\in\mathbb{Z}$, then $x\in E^c=\{0\}$ thus $x=0$ proving the expansivity of $L$.
\qed
\end{proof}

We use this remark to prove the following.

\begin{theo}
Let $X$ be a finite-dimensional Banach space.
If $L\in GL(X)$ is expansive, then $L$ is hyperbolic.
\end{theo}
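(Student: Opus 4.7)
The plan is to argue by contrapositive using Lemma \ref{ele}: expansivity is equivalent to $E^c = \{0\}$. So I would assume $L$ is not hyperbolic, meaning $\sigma(L) \cap S^1 \neq \emptyset$, and produce a nonzero vector with bounded two-sided orbit, contradicting expansivity.

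The first step is to exploit the finite-dimensional hypothesis in the same way the proof of Theorem \ref{guadal} does: in finite dimension, $\sigma(L) = \sigma_p(L)$, so any spectral value on the unit circle is in fact an eigenvalue. Pick such a $\lambda$ with $|\lambda|=1$ and an eigenvector $x_0 \neq 0$ with $L(x_0) = \lambda x_0$. Then for every $n \in \mathbb{Z}$ one has $L^n(x_0) = \lambda^n x_0$, hence
$$
\|L^n(x_0)\| = |\lambda|^n\,\|x_0\| = \|x_0\|,
$$
so $\sup_{n \in \mathbb{Z}} \|L^n(x_0)\| < \infty$, i.e. $x_0 \in E^c$. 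By Lemma \ref{ele} this forces $x_0 = 0$, a contradiction. Therefore $\sigma(L) \cap S^1 = \emptyset$, i.e. $L$ is hyperbolic.

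The only subtle point I expect is the real-versus-complex technicality: the spectrum is defined as a subset of $\mathbb{C}$, but if $X$ is a real Banach space, a unimodular $\lambda = e^{i\theta}$ with $\theta \notin \pi\mathbb{Z}$ need not have a genuine real eigenvector. In that case I would pass to the complexification $X_\mathbb{C}$ and the complexified operator $L_\mathbb{C}$, which has the same spectrum; an eigenvector $x_0 = u + iv \in X_\mathbb{C}$ yields a real two-dimensional $L$-invariant subspace $\mathrm{span}_\mathbb{R}\{u,v\}\subset X$ on which $L$ acts (after choosing a suitable basis) as a rotation by $\theta$, so that every orbit on it is bounded in both time directions. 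Any nonzero vector of this plane then lies in $E^c\setminus\{0\}$, giving the same contradiction. With this case handled, the argument is essentially the short spectral computation above and the theorem follows.
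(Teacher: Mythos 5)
Your argument is correct and is essentially the paper's own proof: both use that in finite dimension $\sigma(L)=\sigma_p(L)$, take a unimodular eigenvalue with eigenvector $x_0$, observe $\sup_{n\in\mathbb{Z}}\|L^n(x_0)\|=\|x_0\|$ so $x_0\in E^c$, and contradict Lemma \ref{ele}. The extra care you take with the real-scalar case via complexification is a sensible refinement but does not change the approach.
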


\begin{proof}
If some  $\lambda\in \sigma_p(L)$ has modulus $1$,
then any unitary eigenvector $x\in X$ associated to $\lambda$
satisfies
$$
\sup_{n\in\mathbb{Z}}\|L^n(x)\|=1
$$
hence $x\in E^c$. But since $L$ is expansive, $E^c=\{0\}$ thus $x=0$ contradicting that $x$ is unitary.
Since $dim(X)<\infty$, $\sigma(L)=\sigma_p(L)$, so
we have proved $\sigma(L)\cap S^1=\emptyset$ proving that $L$ is hyperbolic.
\qed
\end{proof}

To state the next lemma we need the following concept.

\begin{definition}
A {\em linear isometry} of a Banach space $X$ is a
linear homeomorphism $L:X\to X$ such that $\|L(x)\|=\|x\|$ for every $x\in X$.
\end{definition}

\begin{lem}
\label{l2-A}
A linear isometry of a nontrivial Banach space does not have the shadowing property.
\end{lem}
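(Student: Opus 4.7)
The plan is to argue by contradiction by exhibiting, for any candidate $\delta$, a $\delta$-pseudo orbit whose norm grows linearly in $|n|$ while any genuine orbit of $L$ has constant norm, making $\epsilon$-shadowing impossible for any fixed $\epsilon$.

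First I would fix any $\epsilon>0$ and, assuming $L$ has the shadowing property, let $\delta>0$ be the associated shadowing constant. Since $X$ is nontrivial, I can choose a vector $v\in X$ with $\|v\|=1$, and then define the sequence
$$
x_n \;=\; \frac{n\delta}{2}\, L^n(v), \qquad n\in\mathbb{Z}.
$$
The idea of this choice is that $x_n$ is a rescaling of a genuine orbit of $v$ by a linearly growing factor, so the isometric nature of $L$ forces $\|x_n\|=|n|\delta/2$ to grow without bound.

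Next I would verify that $(x_n)_{n\in\mathbb{Z}}$ is a $\delta$-pseudo orbit. A direct computation using linearity and the isometry relation $\|L^{n+1}(v)\|=\|v\|=1$ gives
$$
\|L(x_n)-x_{n+1}\|
=\Bigl\|\tfrac{n\delta}{2}L^{n+1}(v)-\tfrac{(n+1)\delta}{2}L^{n+1}(v)\Bigr\|
=\tfrac{\delta}{2}\|L^{n+1}(v)\|=\tfrac{\delta}{2}\le \delta,
$$
for every $n\in\mathbb{Z}$, so the shadowing hypothesis yields some $x\in X$ with $\|L^n(x)-x_n\|\le\epsilon$ for all $n\in\mathbb{Z}$.

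Finally I would extract the contradiction using the isometry property once more: $\|L^n(x)\|=\|x\|$ for every $n\in\mathbb{Z}$, hence the reverse triangle inequality gives
$$
\tfrac{|n|\delta}{2}=\|x_n\|\le \|L^n(x)\|+\|L^n(x)-x_n\|\le \|x\|+\epsilon,\qquad\forall n\in\mathbb{Z}.
$$
Letting $|n|\to\infty$ produces the desired contradiction, so $L$ cannot have the shadowing property. There is no real obstacle here; the only point that requires a tiny bit of care is choosing the pseudo orbit to be a linearly growing multiple of a genuine orbit, so that the isometry condition $\|L^n(v)\|=1$ is exploited twice: once to certify that the pseudo-orbit errors stay uniformly small, and once to show that any shadowing orbit stays bounded while the pseudo orbit does not.
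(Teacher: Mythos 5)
Your proof is correct. The computation $\|L(x_n)-x_{n+1}\|=\tfrac{\delta}{2}\|L^{n+1}(v)\|=\tfrac{\delta}{2}\le\delta$ is right, the pseudo-orbits in the paper's definition of shadowing are indeed arbitrary sequences indexed by $\mathbb{Z}$ (boundedness is not required), and the final contradiction $\tfrac{|n|\delta}{2}\le\|x\|+\epsilon$ for all $n$ is immediate. Your route differs from the paper's in the choice of pseudo-orbit: the paper fixes an arbitrary $x\in X$, connects $0$ to $x$ by a finite chain $q_0=0,\dots,q_r=x$ with steps of size at most $\delta$, extends it by genuine orbits on both ends, and uses the isometry to conclude $\|z-q_i\|\le 1$ for the shadowing point $z$, hence $\|x\|\le 2$ for every $x$, so $X=\{0\}$. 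You instead use a single explicit unbounded pseudo-orbit, a linearly growing rescaling of one genuine orbit. Both arguments exploit the same fact --- genuine orbits of a linear isometry have constant norm --- but yours is shorter and requires no case-split in defining the pseudo-orbit, while the paper's yields the slightly more quantitative conclusion that a shadowing isometry forces the whole space into a ball of radius $2\epsilon$, a form of the argument that adapts to isometries of general (chain-connected) metric spaces, where your scalar rescaling $\tfrac{n\delta}{2}L^n(v)$ is unavailable.
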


\begin{proof}
Suppose that there is a Banach space $X$ and $L\in GL(X)$ with the shadowing property.
Take $\delta>0$ from this property for $\epsilon=1$,
and let $x\in X$ be an arbitrary point.

Choose a sequence
$0=q_0, q_1,\cdots, q_r=x$ such that
$\|q_{i+1}-q_i\|\leq\delta$ for $0\leq i\leq r-1$.

Define $(p_i)_{i\in\mathbb{Z}}$ by
$$
p_i = \left\{ \begin{array}{rcl}
L^i(q_0),& \mbox{if} & i<0\\
& & \\
L^i(q_i), & \mbox{if} & 0\leq i\leq r\\
& & \\
L^i(q_r), & \mbox{if} & r<i.
\end{array} \right.
$$

Since
$$
\|L(p_i)-p_{i+1}\|=\|L^{i+1}(q_i)-L^{i+1}(q_{i+1})\|=\|q_{i+1}-q_i\|\leq\delta,
$$
$\forall 0\leq i\leq r-1,$
one has $\|L(p_i)-p_{i+1}\|\leq\delta$ for $i\in\mathbb{Z}$ thus
there is $z\in X$ such that
$$
\|L^i(z)-p_i\|\leq 1,\quad\quad\forall i\in\mathbb{Z}.
$$
Since $L$ is a linear isometry,
$$
\|z-q_i\|=\|z-L^{-i}(p_i)\|=\|L^i(z)-p_i\|\leq1,\quad\quad\forall 0\leq i\leq r.
$$

In particular,
$\|z\|=\|z-0\|=\|z-q_0\|\leq1$ and $\|z-x\|=\|z-q_r\|\leq1$ so
$$
\|x\|\leq \|z\|+\|z-x\|\leq 2,\quad\quad\forall x\in X
$$
hence $X=\{0\}$ a contradiction. This completes the proof.
\qed
\end{proof}

\begin{lem}
\label{le}
If $L\in GL(X)$ is shadowing for a finite dimensional Banach space $X$, then $L$ is hyperbolic.
\end{lem}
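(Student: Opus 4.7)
The plan is to argue by contradiction: assume $L\in GL(X)$ is shadowing but not hyperbolic. Since $\dim X<\infty$, $\sigma(L)=\sigma_p(L)$, so some eigenvalue $\lambda$ satisfies $|\lambda|=1$. First, apply the Jordan/Riesz decomposition induced by the partition $\{|z|<1\}\cup\{|z|=1\}\cup\{|z|>1\}$ of $\sigma(L)$ to obtain a closed $L$-invariant direct sum $X=X_c\oplus Y$, where $X_c\neq\{0\}$ is the sum of the generalized eigenspaces corresponding to eigenvalues on $S^1$. By Lemma~\ref{lll3} this forces $L|_{X_c}$ to be shadowing, so everything reduces to contradicting that.

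The argument splits according to whether $L|_{X_c}$ is semisimple. In the semisimple case (every Jordan block trivial), $\{L^n|_{X_c}\}_{n\in\mathbb{Z}}$ is uniformly bounded, and
$$
\|x\|_c:=\sup_{n\in\mathbb{Z}}\|L^n x\|,\qquad x\in X_c,
$$
defines a norm on $X_c$ equivalent to the original (all norms on a finite-dimensional space are equivalent) under which $L|_{X_c}$ is a linear isometry. Since the shadowing property is stable under passage to an equivalent norm, this would exhibit a shadowing linear isometry of the nontrivial Banach space $(X_c,\|\cdot\|_c)$, contradicting Lemma~\ref{l2-A}.

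In the non-semisimple case, some eigenvalue $\lambda$ with $|\lambda|=1$ carries a Jordan block $J$ of size $d\geq 2$. Each Jordan block is a direct summand of the full Jordan decomposition of $X$, so a second application of Lemma~\ref{lll3} shows $L|_J$ is shadowing. Choosing a Jordan basis $v_1,\ldots,v_d$ of $J$ with $Lv_1=\lambda v_1$ and $Lv_k=\lambda v_k+v_{k-1}$ for $2\leq k\leq d$, I would construct the pseudo-orbit $p_i=\sum_k b_{i,k}v_k$ by setting $b_{i,d}=\tfrac{\delta}{2}\,i\,\lambda^i$ for $i\geq 0$ and $b_{i,d}=0$ for $i<0$, and by defining the remaining coordinates through the exact cascade $b_{i+1,k}=\lambda b_{i,k}+b_{i,k+1}$, $k<d$, with zero initialization. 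A routine check then shows that $(p_i)_{i\in\mathbb{Z}}$ is a $\delta$-pseudo orbit of $L|_J$: the only nonzero residue of $Lp_i-p_{i+1}$ sits in the $v_d$-slot and has modulus $\delta/2$. On the other hand, for any $z=\sum_k\alpha_k v_k\in J$, the $v_d$-component of $L^iz$ equals $\alpha_d\lambda^i$, of constant modulus $|\alpha_d|$, while the $v_d$-component of $p_i$ has modulus $\delta|i|/2\to\infty$. No such $z$ can therefore $\epsilon$-shadow $(p_i)$, contradicting the shadowing of $L|_J$.

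The main obstacle is the non-semisimple case: one has to isolate a single Jordan block as a direct summand of $X$ and then exploit the mismatch between the purely rotational behavior of genuine orbits at the top of the Jordan chain and the unbounded drift that pseudo-orbits can accumulate in that same coordinate. Once this is in place, the construction is essentially forced, because only the $v_d$-coordinate is needed to obtain the contradiction, with the cascade in the lower coordinates merely ensuring that $(p_i)$ remains a $\delta$-pseudo orbit.
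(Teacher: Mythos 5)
Your proof is correct, and it takes a more careful route than the one in the text. The paper's argument is one step: pick $\lambda\in\sigma(L)\cap S^1$, let $F$ be its eigenspace, write $X=E\oplus F$, conclude from Lemma~\ref{lll3} that $L|_F$ is shadowing, and contradict Lemma~\ref{l2-A} because $L|_F=\lambda I$ is an isometry. Your semisimple branch is essentially this same argument, run on the whole central generalized eigenspace $X_c$ after renorming by $\|x\|_c=\sup_{n\in\mathbb{Z}}\|L^nx\|$ (legitimate, since shadowing passes through the bi-Lipschitz identity map between the two equivalent norms, cf.\ Lemma~\ref{Claim 2}). The genuine divergence is your non-semisimple branch, and it fills a real gap: when $\lambda$ carries a nontrivial Jordan block, the eigenspace $F$ has \emph{no} $L$-invariant complement (already for a single $2\times2$ unimodular Jordan block), so Lemma~\ref{lll3} cannot be applied to the paper's splitting and the isometry argument does not reach that case. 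Your explicit pseudo-orbit, with linear drift $b_{i,d}=\tfrac{\delta}{2}\,i\,\lambda^i$ in the top Jordan coordinate played against the constant-modulus $v_d$-component $\alpha_d\lambda^i$ of every true orbit, handles exactly this situation; the exact cascade in the lower coordinates correctly confines the residue of $Lp_i-p_{i+1}$ to the $v_d$-slot. The only points to polish are cosmetic: normalize the Jordan basis (or shrink the constant) so the coordinate bound $\delta/2$ on the residue becomes a norm bound $\leq\delta$, and observe that the drift occurs only as $i\to+\infty$ (the pseudo-orbit is exact for $i<0$), which suffices since the coordinate functional $v_d^*$ is bounded in finite dimensions.
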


\begin{proof}
Suppose that there is $\lambda\in \sigma(L)\cap S^1$.
Then, $\lambda$ is an eigenvalue and there is a direct sum $X=E\oplus F$ by closed subspaces
$E$ and $F$ where $F$ is the eigenspace associated to $\lambda$.
By Lemma \ref{lll3} we have that $L|_F$ has the shadowing property.
But $L|_F$ is an isometry so we have a contradiction by Lemma \ref{l2-A}.
Hence $\sigma(L)\cap S^1=\emptyset$ so $L$ is hyperbolic.
\qed
\end{proof}

We have then proved the following result:

\begin{theo}
The following properties are equivalent for any invertible linear operator of a finite-dimensional Banach space $L:X\to X$:
\begin{itemize}
\item
$L$ is hyperbolic.
\item
$L$ is expansive.
\item
$L$ has the shadowing property.
\end{itemize}
\end{theo}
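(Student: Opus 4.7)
The plan is that this final theorem is essentially a packaging result: every implication needed has already been established in the preceding material, so I would structure the proof as a short cyclic argument that simply invokes the previously proved lemmas and theorems.

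First I would observe that the implication ``$L$ hyperbolic $\Longrightarrow$ $L$ is both expansive and shadowing'' holds in arbitrary Banach spaces, with no finite-dimensionality assumption, by Theorem \ref{nature}. So this direction gives both ``hyperbolic $\Longrightarrow$ expansive'' and ``hyperbolic $\Longrightarrow$ shadowing'' for free.

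Next I would handle the reverse implication ``expansive $\Longrightarrow$ hyperbolic'' by quoting the theorem proved just above Lemma \ref{l2-A}, which uses exactly the fact that in finite dimension $\sigma(L)=\sigma_p(L)$: any $\lambda\in\sigma(L)\cap S^1$ would provide a unitary eigenvector $x$ with $\|L^n(x)\|=1$ for all $n\in\mathbb{Z}$, putting $x$ in $E^c$ and contradicting expansivity via Lemma \ref{ele}. Then I would handle ``shadowing $\Longrightarrow$ hyperbolic'' by quoting Lemma \ref{le}, whose proof uses the finite-dimensional fact that any $\lambda\in\sigma(L)\cap S^1$ is an eigenvalue, splits $X=E\oplus F$ along the associated eigenspace, restricts the shadowing property to $F$ via Lemma \ref{lll3}, and then derives a contradiction from Lemma \ref{l2-A} since $L|_F$ is a linear isometry of a nontrivial Banach space.

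Putting these together, the cycle hyperbolic $\Rightarrow$ expansive $\Rightarrow$ hyperbolic and hyperbolic $\Rightarrow$ shadowing $\Rightarrow$ hyperbolic gives the equivalence of all three. There is no real obstacle here since the work was done in the lemmas; the only thing to be careful about is to state clearly which ingredients require $\dim(X)<\infty$ (namely the two converses, which rest on $\sigma(L)=\sigma_p(L)$) and which do not (Theorem \ref{nature}, which is general). I would therefore write the proof as essentially three lines, citing Theorem \ref{nature}, the expansive-implies-hyperbolic theorem, and Lemma \ref{le}, and noting that this closes the cycle.
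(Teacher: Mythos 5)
Your proposal matches the paper exactly: the theorem is stated there with the preamble ``We have then proved the following result,'' and the intended proof is precisely the assembly you describe, namely Theorem \ref{nature} for the two forward implications, the expansive-implies-hyperbolic theorem (via $E^c$ and the fact that $\sigma(L)=\sigma_p(L)$ in finite dimension), and Lemma \ref{le} (via Lemma \ref{lll3} and Lemma \ref{l2-A}) for shadowing-implies-hyperbolic. Your attention to which steps genuinely use $\dim(X)<\infty$ is a correct and worthwhile clarification, but the route is the same.
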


\chapter{Hyperbolicity, expansivity and shadowing}

\noindent
Let $X$ be a (complex) Banach space.
Denote by $GL(X)$ the set of linear homeomorphisms $L:X\to X$.
We will work with the following notions of expansivity:

\begin{deff}
We say that $L\in GL(X)$ is
\begin{itemize}
\item
 {\em uniformly expansive} if
there is $m\in\mathbb{N}$ such that
either $\|L^m(x)\|\geq2$ or $\|L^{-m}(x)\|\geq2$ for every $x\in X$ with $\|x\|=1$;
\item
{\em asymptotically expansive} \cite{lmv} if there is $\epsilon>0$ (called asymptotic expansivity constant) such that
if $\|L^n(x)\|\leq\epsilon$ for all $n\geq0$, then $\lim_{n\to\infty}L^n(x)=0$.
\end{itemize}
\end{deff}

It follows that $L$ is hyperbolic if and only if there is a splitting $X=S\oplus U$ formed by closed subspaces $S$ and $UY$ such that
$L(S)=S$, $L^{-1}(U)=U$, $r(L|_S)<1$ and $r(L^{-1}|_U)<1$.

In this chapter we will prove the result below.

\begin{theo}
\label{thA-1}
The following properties are equivalent for every $L\in GL(X)$:
\begin{enumerate}
\item
$L$ is expansive and has the shadowing property.
\item
$L$ is asymptotically expansive and has the shadowing property.
\item
$L$ is hyperbolic.
\end{enumerate}
\end{theo}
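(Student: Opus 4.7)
The plan is to prove the three equivalences by the easy implications $(3)\Rightarrow(1)$ and $(3)\Rightarrow(2)$, followed by the nontrivial converses $(1)\Rightarrow(3)$ and $(2)\Rightarrow(3)$, which share the same construction. The first implication $(3)\Rightarrow(1)$ is exactly Theorem \ref{nature} from the previous chapter. For $(3)\Rightarrow(2)$, using the splitting $X=S\oplus U$ provided by hyperbolicity, any $x$ with $\|L^n(x)\|$ bounded for $n\ge 0$ must have zero unstable component (because $\|L^n(x^u)\|\ge K^{-1}e^{\lambda n}\|x^u\|$ blows up), so $x\in S$ and $L^n(x)\to 0$; this gives asymptotic expansivity with any constant, and shadowing is already in $(1)$ via Theorem \ref{nature}.

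The heart of the theorem is the converse, and the unified plan is to build stable and unstable subspaces from the dynamics:
$$
S=\{x\in X:\lim_{n\to\infty}L^n(x)=0\},\qquad U=\{x\in X:\lim_{n\to\infty}L^{-n}(x)=0\}.
$$
The program breaks into four steps. First, show $S$ and $U$ are $L$-invariant linear subspaces and are closed; linearity is immediate, and closedness follows from the (asymptotic) expansivity hypothesis combined with uniform control of $\|L^n|_S\|$ obtained via Banach--Steinhaus. Second, check $S\cap U=\{0\}$: any $x\in S\cap U$ has a globally bounded orbit, so after scaling to fit inside the (asymptotic) expansivity constant both $L^n(x)\to 0$ and $L^{-n}(x)\to 0$, hence $x=0$. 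Third, use shadowing to obtain $S+U=X$: for a given $x\in X$, after rescaling so that a suitable multiple of $x$ satisfies the $\delta$-jump requirement, shadow the pseudo-orbit that equals $L^n(x)$ for $n\ge 0$ and equals $0$ for $n<0$ by some point $z$; then $z$ has bounded backward orbit and provides (modulo a correction) the unstable part, while $x-z$ has a forward orbit that stays small and provides the stable part. Fourth, promote the pointwise convergences on $S$ and $U$ to uniform exponential rates by applying Banach--Steinhaus to $\{L^n|_S\}_{n\ge 0}$ and $\{L^{-n}|_U\}_{n\ge 0}$, and then invoke Gelfand's spectral radius formula to conclude $r(L|_S)<1$ and $r(L^{-1}|_U)<1$, which is the spectral characterization of hyperbolicity.

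The main obstacle is step three, the decomposition $S+U=X$. Shadowing produces a candidate splitting, but one must check that the assignment $x\mapsto(x^s,x^u)$ is well defined (independence from the shadowing point requires uniqueness modulo $S\cap U=\{0\}$, i.e.\ expansivity), linear, and bounded, so that the projection onto $S$ along $U$ is a continuous linear map. A subtle point specific to case $(2)$ is that asymptotic expansivity is a one-sided notion, so one must check that the backward part of the argument still works; this is handled by noting that $L^{-1}$ inherits shadowing by Lemma \ref{lll2} and that the asymptotic expansivity hypothesis can be applied to both $L$ and $L^{-1}$ through the analogous backward stable set. Once the closed direct-sum decomposition is in hand, the upgrade to uniform exponential decay and the appeal to Gelfand's formula are essentially routine.
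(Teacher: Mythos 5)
Your outline of the easy directions and of the overall architecture (build $S$ and $U$ from the dynamics, show they are closed, intersect trivially, and sum to $X$, then get the spectral radius condition) matches the spirit of the paper, but the hard step is carried out by a mechanism that does not work. The fatal point is step four. Banach--Steinhaus applied to $\{L^n|_S\}_{n\ge 0}$ can only yield \emph{uniform boundedness} $\sup_n\|L^n|_S\|<\infty$ (and even that presupposes $S$ is already closed, hence complete, which is what you are trying to prove --- so step one is circular as written). It cannot ``promote'' pointwise convergence $L^n(x)\to 0$ to a uniform exponential rate: strong convergence to $0$ together with $\sup_n\|L^n\|<\infty$ does not imply $r(L)<1$ in infinite dimensions. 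The paper's own Example \ref{tucides} (a diagonal operator on $\ell^1$ with weights $\lambda_k$ accumulating at the unit circle) satisfies $L^n\to 0$ strongly yet has spectrum meeting $S^1$; so Gelfand's formula is simply out of reach from the information you have gathered. A related mismatch occurs in step three: shadowing the pseudo-orbit $(\ldots,0,0,x,Lx,L^2x,\ldots)$ produces a point $z$ whose backward orbit is merely \emph{bounded}, not convergent to $0$, so $z$ lands in $E^{cu}=\{x:\sup_{n\ge0}\|L^{-n}x\|<\infty\}$ rather than in your $U$; you cannot conclude $X=S+U$ for your sets without already knowing the exponential estimates.

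The missing idea, which is the actual content of the theorem, is the intermediate notion of \emph{uniform expansivity}. The paper first proves (Lemma \ref{l2-1}) that expansivity (or asymptotic expansivity) together with shadowing forces $\sigma_a(L)\cap S^1=\emptyset$: an approximate unit eigenvalue $\lambda$ yields the pseudo-orbit $(\lambda^n x)_{n\in\mathbb Z}$, whose shadow is a nonzero point with orbit trapped in an annulus, contradicting either form of expansivity. By Hedlund's theorem this gives uniform expansivity. Then the Bernardes--Messaoudi lemma (Lemma \ref{trump}) converts uniform expansivity into the identity
$E^{cs}=\{x:\|L^n(x)\|\le c\beta^n\|x\|\ \forall n\ge0\}$ with $0<\beta<1$, which simultaneously delivers closedness of $E^{cs}$ and $r(L|_{E^{cs}})<1$ via Gelfand; the same applied to $L^{-1}$ handles $E^{cu}$. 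Combined with $X=E^{cs}+E^{cu}$ (from shadowing, Lemma \ref{l1-1}) and $E^{cs}\cap E^{cu}=\{0\}$ (from expansivity), hyperbolicity follows. Without the uniform expansivity step and the quantitative lemma, there is no bridge from the qualitative behavior of individual orbits to the spectral condition, and that bridge is precisely what your proposal lacks.
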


Item (3) implies Item (1) (results by Eisenberg-Hedlund and \cite{o}).
That Item (1) implies Item (3) was proved Bernardes and Messaoudi \cite{bm2}.
The equivalences involving (2) are new as far as we known.

To prove this theorem we will use the following notations and lemmas.
Given $L\in GL(X)$ we define
$$
E^{cs}=\{x\in X: \sup_{n\geq0}\|L^n(x)\|<\infty\}
\mbox{ and } E^{cu}=\{x\in X:\sup_{n\geq0}\|L^{-n}(x)\|<\infty\}.
$$
Clearly $E^{cs}$ and $E^{cu}$ are subspaces which are {\em invariant} i.e.
$L(E^{cs})=E^{cs}$ and $L(E^{cu})=E^{cu}$.

Clearly, $E^{cs}\cap E^{cu}=E^c$
so, by Lemma \ref{ele}, $L$ is expansive if and only if $E^{cs}\cap E^{cu}=\{0\}$.
A further property is given below.

\begin{lem}
\label{l1-1}
If $L\in GL(X)$ has the shadowing property, then
$X=E^{cs}+E^{cu}$.
\end{lem}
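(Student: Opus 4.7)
The plan is to use the shadowing property to split an arbitrary vector into a piece with bounded forward orbit and a piece with bounded backward orbit, by feeding a carefully chosen pseudo-orbit into shadowing.

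Fix $\epsilon>0$ and let $\delta>0$ be the corresponding shadowing constant. Given $x\in X$ with $\|x\|\leq\delta$, I would consider the two-sided sequence
$$
p_i=\begin{cases} 0 & \text{if } i\leq -1, \\ L^i(x) & \text{if } i\geq 0. \end{cases}
$$
A one-line check shows $L(p_i)=p_{i+1}$ exactly for every $i\neq -1$, while at the single ``jump'' at $i=-1$ one has $\|L(p_{-1})-p_0\|=\|x\|\leq\delta$, so $(p_i)_{i\in\mathbb{Z}}$ is a $\delta$-pseudo orbit of $L$. By shadowing, there exists $z\in X$ with $\|L^i(z)-p_i\|\leq\epsilon$ for every $i\in\mathbb{Z}$.

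The two halves of this estimate are designed to give the two summands. For $i\leq -1$ it reads $\|L^i(z)\|\leq\epsilon$, so $\sup_{n\geq 0}\|L^{-n}(z)\|\leq\max\{\|z\|,\epsilon\}<\infty$, i.e.\ $z\in E^{cu}$. For $i\geq 0$ it reads $\|L^i(z-x)\|\leq\epsilon$, so $z-x\in E^{cs}$, and therefore $x-z\in E^{cs}$ because $E^{cs}$ is a subspace. Hence
$$
x=(x-z)+z\in E^{cs}+E^{cu}.
$$

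For arbitrary $x\in X$, I would homogenize: pick any scalar $\lambda>0$ with $\|\lambda^{-1}x\|\leq\delta$; the previous step gives $\lambda^{-1}x\in E^{cs}+E^{cu}$, and since both summand spaces are invariant under scalar multiplication, multiplying by $\lambda$ yields $x\in E^{cs}+E^{cu}$, as desired.

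I do not foresee a real obstacle. The only genuine idea is choosing the right pseudo-orbit, namely one that equals $0$ on the negative side and follows the true orbit of $x$ on the nonnegative side, with a single controlled jump at index $-1$; once that choice is made, the definitions of $E^{cs}$ and $E^{cu}$ read off the two summands of the decomposition automatically, and the reduction from small $x$ to general $x$ is just linearity.
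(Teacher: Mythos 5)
Your proof is correct and follows essentially the same route as the paper: the same pseudo-orbit (zero on the negative side, the true orbit of $x$ on the nonnegative side, with one jump of size $\|x\|\leq\delta$), the same reading of the two halves of the shadowing estimate to get $z\in E^{cu}$ and $x-z\in E^{cs}$, and an equivalent reduction from small $x$ to general $x$ (you rescale the point; the paper invokes that $E^{cs}+E^{cu}$ is a subspace containing a ball).
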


\begin{proof}
Since both $E^{cs}$ and $E^{cu}$ are subspaces, $E^{cs}+E^{cu}$ also is.
Hence it suffices to find $\delta>0$ such that $B(0,\delta)\subset E^{cs}+E^{cu}$,
where $B(a,r)=\{x\in X:\|x-a\|<\delta\}$ is the open ball.

Take $\delta>0$ from the shadowing property of $L$ for $\epsilon=1$.
Take $x\in B(0,\delta)$ i.e. $\|x\|<\delta$.
Define the sequence $(x_n)_{n\in\mathbb{Z}}$ by
$$
x_n = \left\{ \begin{array}{rcl}
0,& \mbox{if} &n<0\\
& & \\
L^n(x), & \mbox{if} & n\geq0.
\end{array} \right.
$$
Then, $\|L(x_n)-x_{n+1}\|<\delta$ for every $n\in\mathbb{Z}$ so there is $y\in X$ such that
$$
\|L^n(y)-x_n\|\leq 1,\quad\quad\forall n\in\mathbb{Z}.
$$
It follows that $\|L^n(y)\|\leq 1$ for every $n<0$ so $y\in E^{cu}$.
Also $\|L^n(y-x)\|\leq 1$ for $n\geq0$ so $y-x\in E^{cs}$.
Hence
$x=x-y+y\in E^{cs}+E^{cu}$ proving $B(0,\delta)\subset E^{cs}+E^{cu}$.
Since $E^{cs}+E^{cu}$ is a subspace, $X=E^{cs}+E^{cu}$ completing the proof.
\qed
\end{proof}

The expansive alternative of the lemma below was noted in \cite{bcdmp}.

\begin{lem}
\label{l2-1}
If $L\in GL(X)$ is expansive (or asymptotically expansive) and has the shadowing property, then
$L$ is uniformly expansive.
\end{lem}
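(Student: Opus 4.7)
The strategy is to upgrade Lemma \ref{l1-1} to a topological direct sum $X = E^{cs} \oplus E^{cu}$ by closed subspaces with uniformly bounded projections, use Banach--Steinhaus to obtain uniform operator bounds on the restricted half-orbits, force the spectral radii $r(L|_{E^{cs}})$ and $r(L^{-1}|_{E^{cu}})$ strictly below one via a shadowing argument applied to approximate eigenvectors, and conclude uniform expansivity from these spectral estimates.

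The first step is to refine Lemma \ref{l1-1}. A careful reading of its proof yields a quantitative decomposition: there exists $C > 0$ such that every $x \in X$ admits $a \in E^{cs}$, $b \in E^{cu}$ with $x = a + b$ and $\|a\|, \|b\| \leq C\|x\|$. In the expansive case, Lemma \ref{ele} forces $E^{cs} \cap E^{cu} = E^c = \{0\}$, so the decomposition is unique; the projections $P^s, P^u$ are bounded linear idempotents with $P^s + P^u = I$, and hence $E^{cs} = \ker P^u$ and $E^{cu} = \ker P^s$ are closed. In the asymptotically expansive case, establishing $E^c = \{0\}$ (equivalently, the closedness of $E^{cs}$ and $E^{cu}$) requires an additional shadowing-based argument, exploiting the asymptotic expansivity constant $\epsilon_0$ on a pseudo-orbit obtained by truncating the forward orbit of a putative nonzero element of $E^c$. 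Once the closed topological direct sum is available, Banach--Steinhaus on the Banach space $E^{cs}$ delivers $K > 0$ with $\|L^n|_{E^{cs}}\| \leq K$ for all $n \geq 0$; symmetrically, $\|L^{-n}|_{E^{cu}}\| \leq K$. By Gelfand's formula, $\sigma(L|_{E^{cs}}), \sigma(L^{-1}|_{E^{cu}}) \subseteq \overline{\mathbb{D}}$.

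The heart of the argument is to prove $r(L|_{E^{cs}}) < 1$. Suppose for contradiction $\mu \in \sigma(L|_{E^{cs}}) \cap S^1$. Since $\sigma(L|_{E^{cs}}) \subseteq \overline{\mathbb{D}}$, $\mu$ lies on the topological boundary of the spectrum in $\mathbb{C}$ and is therefore an approximate eigenvalue: for each $\delta' > 0$ we pick a unit $y \in E^{cs}$ with $\|Ly - \mu y\| < \delta'$. The sequence $(\lambda \mu^n y)_{n \in \mathbb{Z}}$ is then a $\lambda\delta'$-pseudo-orbit of $L|_{E^{cs}}$ of constant norm $\lambda$. Since $L|_{E^{cs}}$ inherits the shadowing property from $L$ by Lemma \ref{lll3}, one calibrates $\lambda > 0$ and $\delta' > 0$ carefully---using the scale-invariance (Lipschitz shadowing) automatic for linear operators---to obtain a shadow $z \in E^{cs}$ satisfying $0 < \lambda - \epsilon_{\mathrm{shad}} \leq \|L^n z\| \leq \lambda + \epsilon_{\mathrm{shad}} \leq \epsilon_0$ for all $n \in \mathbb{Z}$. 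Asymptotic expansivity then forces $L^n z \to 0$, contradicting the positive lower bound; in the purely expansive case the contradiction is cheaper still, since $\sup_n \|L^n z\| < \infty$ puts $z \in E^c = \{0\}$, against $\|z\| > 0$. Hence $\sigma(L|_{E^{cs}}) \cap S^1 = \emptyset$ and $r(L|_{E^{cs}}) < 1$; symmetrically $r(L^{-1}|_{E^{cu}}) < 1$.

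Finally, to extract uniform expansivity, pick $\eta > 0$ small enough that $\tfrac{1}{2\eta} - \eta C \geq 2$, and then $m$ large enough that $\|L^m|_{E^{cs}}\|, \|L^{-m}|_{E^{cu}}\| < \eta$ (possible by Gelfand and the spectral estimates above). For a unit vector $x = a + b$ with $\|a\|, \|b\| \leq C$ we have $\max(\|a\|, \|b\|) \geq 1/2$. When $\|a\| \geq 1/2$, the identity $a = L^m(L^{-m}a)$ forces $\|L^{-m}a\| \geq \|a\|/\eta \geq 1/(2\eta)$ while $\|L^{-m}b\| \leq \eta C$, so $\|L^{-m}x\| \geq 1/(2\eta) - \eta C \geq 2$. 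The case $\|b\| \geq 1/2$ gives $\|L^m x\| \geq 2$ symmetrically, yielding uniform expansivity with exponent $m$. The main obstacles are the calibration in the spectral step---where $\epsilon_{\mathrm{shad}}$ must lie strictly below $\lambda$ while $\lambda + \epsilon_{\mathrm{shad}}$ stays under $\epsilon_0$, which is possible because $\delta(\epsilon)$ scales linearly with $\epsilon$ for linear operators with shadowing---and the closedness of $E^{cs}, E^{cu}$ in the asymptotically expansive regime, which cannot be read off directly from Lemma \ref{ele}.
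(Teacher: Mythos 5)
Your argument for the purely expansive case is essentially sound, but it takes a far longer route than necessary: you in effect reprove the hard implication ``expansive $+$ shadowing $\Rightarrow$ hyperbolic'' of Theorem \ref{thA-1} (for which this lemma is meant to be a stepping stone) and then read off uniform expansivity from the resulting splitting. The paper's proof is a one-paragraph argument: by Hedlund's theorem, uniform expansivity is equivalent to $\sigma_a(L)\cap S^1=\emptyset$, and if $\lambda\in\sigma_a(L)\cap S^1$ one shadows the constant-modulus pseudo-orbit $(\lambda^n x)_{n\in\mathbb{Z}}$ built from an approximate eigenvector to produce $y$ with $\frac12\leq\|L^n(y)\|\leq\frac32$ for all $n\in\mathbb{Z}$, which contradicts expansivity directly and, after rescaling by $\frac23 e$, contradicts asymptotic expansivity as well. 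Your ``heart of the argument'' is exactly this computation, just performed inside $E^{cs}$ instead of in $X$, at the price of first having to construct the splitting.

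The genuine gap is the asymptotically expansive case. Your entire construction rests on the topological direct sum $X=E^{cs}\oplus E^{cu}$, for which you need $E^{cs}\cap E^{cu}=E^c=\{0\}$; without it there is no uniqueness of the decomposition, no bounded projections, no closedness of $E^{cs}$ and $E^{cu}$, no Banach--Steinhaus on $E^{cs}$, and no way to invoke Lemma \ref{lll3} to transfer the shadowing property to $L|_{E^{cs}}$. In the expansive case Lemma \ref{ele} hands you $E^c=\{0\}$ for free, but under asymptotic expansivity you only defer to ``an additional shadowing-based argument\dots truncating the forward orbit of a putative nonzero element of $E^c$,'' and that sketch does not work: asymptotic expansivity can only produce a contradiction against a vector whose forward orbit is bounded by $\epsilon_0$ yet bounded \emph{away} from $0$, since its conclusion is merely $L^n(z)\to0$. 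An arbitrary nonzero $x\in E^c$, or any shadow of a truncation of its orbit, need not satisfy any such lower bound --- its forward orbit may itself converge to $0$ without contradiction. The only place a uniform lower bound appears is when one shadows a pseudo-orbit of constant norm coming from a unimodular approximate eigenvalue, which is precisely the paper's argument and which bypasses $E^c$ entirely. As written, the asymptotically expansive half of your proof is incomplete, and I see no way to complete it along the lines you indicate without first proving what the lemma (together with Lemma \ref{trump}) is designed to establish.
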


\begin{proof}
By a theorem of Hedlund \cite{h} it suffices to show that
$\sigma_a(L)\cap S^1=\emptyset$ where $\sigma_a(L)$ is the approximated point spectrum.

Suppose not namely that there is $\lambda\in \sigma_a(L)\cap S^1$.
We choose $\epsilon=\frac{1}2$ and take $\delta$ from the shadowing property of $L$ for this $\epsilon$.
Because $\lambda\in \sigma_a(L)$, there is $x\in X$ with $\|x\|=1$ such that $\|L(x)-\lambda x\|<\delta$.
Since the sequence
$(\lambda^nx)_{n\in\mathbb{Z}}$ satisfies
$\|L(\lambda^nx)-\lambda^{n+1}x\|=\|\lambda^n(L(x)-\lambda x)\|=\|L(x)-\lambda x\|<\delta$ for all $n\in\mathbb{Z}$, there is $y\in X$ such that
$\|L^n(y)-\lambda^nx\|\leq \epsilon$ for all $n\in\mathbb{Z}$.
Then,
\begin{equation}
\label{traidor}
\frac{1}2\leq \|L^n(y)\|\leq 1+\frac{1}2<2,\quad\quad\forall n\in\mathbb{Z}.
\end{equation}
This implies at once that $L$ cannot be expansive.
Now suppose that $L$ is asymptotically expansive and let $e$ be the asymptotic expansivity constant.
It follows that
$\|L^n(\frac{2}3ey)\|=\frac{2}3e\|L^n(y)\|\leq \frac{2}3e\frac{3}2=e$ for all $n\geq0$. Therefore,
$L^n(\frac{2}3ey)\to0$ as $n\to\infty$ hence $L^n(y)\to0$ too contradicting (\ref{traidor}).
This completes the proof.
\qed
\end{proof}

The next key lemma below is due to Bernardes and Messaoudi \cite{bm1}.

\begin{lem}
\label{trump}
If $L\in GL(X)$ is uniformly expansive, then there are $\beta>0$ and $c>0$ such that
$$
E^{cs}=\{x\in X:\|L^n(x)\|\leq c\beta^n\|x\|,\quad\forall n\geq0\}.
$$
\end{lem}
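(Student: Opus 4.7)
The containment $\supseteq$ of the claimed identity is immediate once we secure $\beta \in (0,1)$: any $x$ satisfying $\|L^n(x)\| \leq c\beta^n\|x\|$ for all $n \geq 0$ clearly has bounded forward orbit, hence lies in $E^{cs}$. The substance of the lemma is the reverse inclusion, namely that mere membership in $E^{cs}$ already forces uniform exponential decay of the forward orbit.

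Fix a nonzero $x \in E^{cs}$ and set $a_n := \|L^n(x)\|$; since $L$ is invertible, $a_n > 0$ for every $n \geq 0$. Let $m$ be the integer furnished by uniform expansivity. Applying the uniform-expansivity dichotomy to the unit vector $v_n := L^n(x)/a_n$ gives, for each $n \geq 0$,
\begin{equation*}
\text{(A) } a_{n+m} \geq 2 a_n, \qquad \text{or} \qquad \text{(B) } \|L^{-m}(v_n)\| \geq 2;
\end{equation*}
for $n \geq m$, alternative (B) rewrites as $a_{n-m} \geq 2 a_n$, i.e.\ $a_n \leq \tfrac{1}{2} a_{n-m}$.

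The decisive step is to rule out (A) entirely. Suppose (A) holds at some $n_0$. Applying the dichotomy at $n_0 + m$, alternative (B) would read $a_{n_0} \geq 2 a_{n_0+m}$, which together with (A) forces $a_{n_0} \geq 4 a_{n_0}$, impossible since $a_{n_0} > 0$. Hence (A) must propagate to $n_0 + m$, and inductively to every $n_0 + km$, yielding $a_{n_0+km} \geq 2^k a_{n_0} \to \infty$ and contradicting $x \in E^{cs}$. Therefore (A) never occurs, and $a_n \leq \tfrac{1}{2} a_{n-m}$ holds for every $n \geq m$.

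The remaining estimation is a routine geometric iteration. Writing $n = qm + r$ with $0 \leq r < m$ one gets $a_n \leq 2^{-q} a_r \leq 2^{-q} \max(1, \|L\|^{m-1}) \|x\|$, and substituting $2^{-q} \leq 2 \cdot 2^{-n/m}$ produces $a_n \leq c \beta^n \|x\|$ with $\beta := 2^{-1/m} \in (0,1)$ and $c$ a constant depending only on $m$ and $\|L\|$. The main obstacle I expect is the propagation argument that rules out (A); once one has internalized that Case A cannot interleave with Case B without violating positivity of $a_{n_0}$, the rest is definitional or a controlled geometric sum.
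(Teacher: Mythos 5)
Your proposal is correct and follows essentially the same route as the paper: the uniform-expansivity dichotomy applied to the normalized forward iterates, the observation that the "expanding" alternative propagates forward in steps of $m$ and hence contradicts $x\in E^{cs}$ (your propagation step is exactly the paper's invariance of the set $A$ under the normalized $L^m$-map), and the resulting recursion $a_n\leq\tfrac12 a_{n-m}$ iterated into the geometric bound. Your phrasing in terms of the scalar sequence $a_n$ is a slightly cleaner packaging of the same argument.
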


\begin{proof}
Obviously the set in the right of the above equality is contained the left-hand one. Hence we need to prove the converse contention. We proceed as follows.

Let $m\in \mathbb{N}$ be given by the definition of uniformly expansivity for $L$.
It follows that $\{\|x\|=1\}=A\cup B$ where
$A=\{x\in X:\|x\|=1, \|L^m(x)\|\geq2\}$ and $B=\{x\in X:\|x\|=1, \|L^{-m}(x)\|\geq2\}$.

If $x\in A$, then $\frac{L^m(x)}{\|L^m(x)\|}\in A$.
Otherwise, $\frac{L^m(x)}{\|L^m(x)\|}\in B$ so
$\frac{\|x\|}{\|L^m(x)\|}\geq2$ thus $1=\|x\|\geq 2\|L^m(x)\|\geq4$ an absurd.
Likewise, if $x\in B$, then $\frac{L^{-m}(x)}{\|L^{-m}(x)\|}\in B$.
Define
$\beta=(\frac{1}2)^m$ and $c=\max\{\frac{L^j}{\beta^{j}}:0\leq j\leq m\}$.

Now take $x\in E^{cs}$, we can assume $x\neq0$, and $n\in \mathbb{N}$.
Define two sequences
$y_k,z_k$ by
$y_1=z_1=\frac{L^n(x)}{\|L^n(x)\|}$, 
$$
y_k=\frac{L^m(y_{k-1})}{\|L^m(y_{k-1})\|}
\quad\mbox{ and }\quad z_k=\frac{L^{-m}(z_{k-1})}{\|L^{-m}(z_{k-1})\|},
\quad\forall k\geq2.
$$
It follows that
\begin{equation}
\label{parler1}
y_k=\frac{L^{(k-1)m+n}(x)}{\|L^m(y_{k-1})\|\cdots \|L^m(y_2)\|\|L^n(x)\|}
\end{equation}
and
\begin{equation}
\label{parler2}
z_k=\frac{L^{-(k-1)m+n}(x)}{\|L^{-m}(z_{k-1})\|\cdots \|L^{-m}(z_2)\|\|L^n(x)\|}.
\end{equation}
If $y_1\in A$, then $y_k\in A$ for all $k\geq0$ so 
$$
2\leq \|L^m(y_k)\|=\frac{\|L^{km+n}(x)\|}{\|L^m(y_{k-1})\|\cdots \|L^m(y_2)\|\|L^n(x)\|}
$$
thus
$$
\|L^{km+n}(x)\|\geq 2\|L^m(y_{k-1})\|\cdots \|L^m(y_2)\|\|L^n(x)\|\geq 2^{k-1}\|L^n(x)\|
$$
hence $\|L^{km+n}(x)\|\to\infty$ as $k\to\infty$ contradicting $x\in E^{cs}$.

Then, $y_1\in B$ so $z_1=y_1\in B$ thus $z_k\in B$ for every $k\geq0$.
From this and (\ref{parler2}) we obtain
$$
2\leq \|L^{-m}(z_k)\|=\frac{\|L^{-km+n}(x)\|}{\|L^{-m}(z_{k-1})\|\cdots \|L^{-m}(z_2)\|\|L^n(x)\|}
$$
thus
$$
\|L^{-km+n}(x)\|\geq 2\|L^{-m}(z_{k-1})\|\cdots \|L^{-m}(z_2)\|\|L^n(x)\|\geq 2^{k-1}\|L^n(x)\|
$$
for all $k,n\geq0$.
Replacing $n$ by $km$ we get
$
\|x\|\geq 2^{k-1}\|L^{km}(x)\|
$
i.e.
$$
\|L^{km}(x)\|\leq \beta^{k-1}\|x\|, \quad\quad\forall k\geq0, x\in E^{cs}.
$$
From this we get $\|L^n(x)\|\leq c\beta^n\|x\|$ for all $x\in E^{sc}$ proving the result.
\qed
\end{proof}

\begin{proof}[of Theorem \ref{thA-1}]
That Item (3) implies Item (1) is already known \cite{eh,o}.
Conversely, suppose that $L$ is expansive and has the shadowing property.
Then, $L$ is uniformly expansive by Lemma \ref{l2-1} and so Lemma \ref{trump} provides $c>0$ and $0< \beta<1$ such that
$$
E^{cs}=\{x\in X:\|L^m(x)\|\leq c\beta^n\|x\|, \forall n\geq0\}.
$$
This implies both $E^{cs}$ is closed and $r(L|_{E^{cs}})<1$ by Gelfand's spectral radius formula.
Applying the same argument to $L^{-1}$ we have both $E^{cu}$ is closed and $r(L^{-1}|_{E^{cu}})<1$.
Since $L$ has the shadowing lemma, $X=E^{cs}+E^{cu}$ by lemma \ref{l1-1} and, since $L$ is expansive, we also have
$E^{cs}\cap E^{cu}=\{0\}$. All together imply that $L$ is hyperbolic  proving Item (3).
We conclude that items (1) and (3) are equivalent.

Now suppose that Item (2) holds i.e. $L$ is asymptotically expansive and has the shadowing property.
By Lemma \ref{l2-1} we have that $L$ is uniformly expansive (hence expansive) and so Item (1) holds.
Finally, suppose that Item (3) holds (in particular, $L$ is uniformly expansive). If $x\in X$ and $\|L^n(x)\|\leq1$ for $n\geq0$, then $x\in E^{cs}$.
Since $L$ is uniformly expansive, we can apply Lemma \ref{trump} to get $c>0$ and $0<\beta<1$ such that
$\|L^n(x)\|\leq c\beta^n\|x\|$ for $n\geq0$ whence $L^n(x)\to0$ as $n\to\infty$. Therefore, $L$ is asymptotically expansive proving Item (2). This completes the proof.
\qed
\end{proof}

\chapter{Generalized hyperbolicity}

Before introducing the definition of generalized hyperbolic operator we will present some
motivating results. The first one is as follows.

\begin{prop}
\label{motiv1}
Let $X$ be a Banach space and $L\in GL(X)$.
If $X=A\oplus B$ is a splitting (direct sum) by closed subspaces $A$ and $B$
such that $L(A)\subset A$ and $L(B)\subset B$, then $L(A)=A$ and $L(B)=B$.
\end{prop}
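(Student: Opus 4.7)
The plan is to exploit the invertibility of $L$ together with the uniqueness of the direct sum decomposition. Since $L \in GL(X)$, the inclusion $L(A) \subset A$ is already given, and I only need to establish the reverse inclusion $A \subset L(A)$ (and the symmetric statement for $B$).

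First I would fix an arbitrary $a \in A$. Because $L$ is a bijection of $X$, there is a unique $x \in X$ with $L(x) = a$. The goal is to show $x$ actually lies in $A$. Writing $x = x_A + x_B$ in its unique decomposition with $x_A \in A$ and $x_B \in B$, and applying $L$, I get
\[
a = L(x) = L(x_A) + L(x_B),
\]
where by the invariance hypothesis $L(x_A) \in A$ and $L(x_B) \in B$. Thus this is a decomposition of $a \in A$ along $A \oplus B$. On the other hand, $a = a + 0$ is another such decomposition, and uniqueness of direct sum representations forces $L(x_A) = a$ and $L(x_B) = 0$. Since $L$ is injective, $x_B = 0$, so $x = x_A \in A$, and therefore $a = L(x_A) \in L(A)$. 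This gives $A \subset L(A)$, and combining with the hypothesis yields $L(A) = A$.

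The identical argument, with the roles of $A$ and $B$ exchanged, gives $L(B) = B$. I do not expect any serious obstacle here: the two ingredients (bijectivity of $L$ and uniqueness of decomposition in a direct sum) do all the work, and the closedness of $A$ and $B$ is not even needed for this particular statement. The only point to be careful about is invoking uniqueness of the $A \oplus B$ decomposition, which is automatic from $A \cap B = \{0\}$, a defining feature of a direct sum.
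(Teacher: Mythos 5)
Your proof is correct and follows essentially the same route as the paper: both arguments decompose $L^{-1}(a)$ along $A\oplus B$ and use $A\cap B=\{0\}$ together with the invariance hypotheses to kill the $B$-component (the paper merely phrases this as a contradiction via the projections $P_A,P_B$, whereas you argue directly from uniqueness of the decomposition). Your side remark that closedness of $A$ and $B$ is not needed here is also accurate.
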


\begin{proof}
First we show $B\subset L(B)$.
Suppose not i.e. there is $x\in B\setminus L(B)$.
Let $P_A:X\to A$ and $P_B:X\to B$ be the associated projections.
Then, $P_AL^{-1}(x)\neq0$ and also
$L^{-1}(x)=P_AL^{-1}(x)+P_BL^{-1}(x)$.
It follows that
$LP_AL^{-1}(x)=x-LP_BL^{-1}(x)\in B$ hence
$LP_AL^{-1}(x)\in B\cap A=\{0\}$ therefore
$P_AL^{-1}(x)=0$ which is absurd.
Then, $B\subset L(B)$ and so $L(B)=B$.
Reversing the roles of $A$ and $B$ we get $L(A)=A$ completing the proof.
\qed
\end{proof}

The next is a useful concept from \cite{mn} based on \cite{bcdmp}.

\begin{deff}
A number $K\in ]0,\infty[$ is called
{\em shadowableness constant} of $L\in GL(X)$ if for every bounded sequence
$(z_n)_{n\in\mathbb{Z}}$ of $X$ there is a sequence $(y_n)_{n\in\mathbb{Z}}$ such that
$$
\sup_{n\in\mathbb{Z}}\|y_n\|\leq K\sup_{n\in\mathbb{Z}}\|z_n\|\quad\mbox{ and }\quad y_{n+1}=Ly_n+z_n,\quad\forall n\in\mathbb{Z}.
$$
\end{deff}
We define
$$
Shad(L)=\inf\{ K>0:K\mbox{ is a shadowableness constant of }L\}
$$
(with the convention that $\inf\emptyset=\infty$).
The lemma below justifies the term "shadowableness constant" in this definition.

\begin{lem}
\label{l1-2}
$L\in GL(X)$ has the shadowing property $\iff$ $Shad(L)<\infty$.
\end{lem}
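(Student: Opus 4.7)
The plan is to set up a dictionary between pseudo orbits and bounded inhomogeneous sequences via the change of variables $y_n = x_n - L^n y$, and then transfer quantitative bounds in each direction. The key algebraic observation is this: if $(x_n)_{n\in\mathbb{Z}}$ is any sequence and $y\in X$, then setting $z_n := x_{n+1}-Lx_n$ and $y_n := x_n - L^n y$ gives the recursion $y_{n+1}=Ly_n+z_n$ identically, and moreover $\|y_n\| = \|L^n y - x_n\|$. Thus, a shadowing point for the sequence $(x_n)$ with error $\epsilon$ is the same data as a solution $(y_n)$ of the inhomogeneous equation driven by $(z_n)$ with sup-norm at most $\epsilon$.

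For the implication $Shad(L)<\infty \Rightarrow$ shadowing: pick any $K$ larger than $Shad(L)$, let $\epsilon>0$, and set $\delta := \epsilon/K$. Given a $\delta$-pseudo orbit $(x_n)_{n\in\mathbb{Z}}$, define $z_n := x_{n+1}-Lx_n$, so $\sup_n \|z_n\|\leq \delta$. By the choice of $K$ there is $(y_n)$ with $y_{n+1}=Ly_n+z_n$ and $\sup_n \|y_n\| \leq K\delta = \epsilon$. Setting $y := x_0 - y_0$, an immediate induction on $n$ (in both directions, using invertibility of $L$) and the common recursion $x_{n+1}=Lx_n+z_n$, $L^{n+1}y = L(L^n y)$ shows $x_n - L^n y = y_n$; hence $\|L^n y - x_n\|\leq \epsilon$ for every $n\in\mathbb{Z}$.

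For the converse $\Rightarrow$: take $\delta>0$ witnessing shadowing for $\epsilon=1$, and let $(z_n)_{n\in\mathbb{Z}}$ be a bounded sequence. The case $\sup_n \|z_n\|=0$ is trivial (take $y_n=0$); otherwise let $M := \sup_n \|z_n\|$ and scale $\tilde z_n := (\delta/M) z_n$, so $\sup_n \|\tilde z_n\| \leq \delta$. Build a pseudo orbit by fixing $x_0=0$ and extending via $x_{n+1}:=Lx_n+\tilde z_n$ for $n\geq 0$ and $x_n := L^{-1}(x_{n+1}-\tilde z_n)$ for $n<0$; then $\|Lx_n-x_{n+1}\|=\|\tilde z_n\|\leq \delta$. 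By shadowing there is $y\in X$ with $\|L^n y - x_n\|\leq 1$ for all $n$. Set $\tilde y_n := x_n - L^n y$; the algebraic identity above yields $\tilde y_{n+1}=L\tilde y_n + \tilde z_n$ and $\sup_n \|\tilde y_n\|\leq 1$. Finally, rescale back by $y_n := (M/\delta)\tilde y_n$ to obtain $y_{n+1}=Ly_n+z_n$ with $\sup_n \|y_n\| \leq (1/\delta)\sup_n \|z_n\|$. Hence $1/\delta$ is a shadowableness constant and $Shad(L)\leq 1/\delta<\infty$.

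The proof is mostly bookkeeping around the single algebraic identity, and the only genuine point to be careful about is the extension of a one-sided recursion to $\mathbb{Z}$ in the converse direction, which uses $L\in GL(X)$ to invert the recursion for negative indices; once this is in place, both scalings and the shadowing estimate line up cleanly.
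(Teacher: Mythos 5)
Your proposal is correct and follows essentially the same route as the paper's proof: the same algebraic identity linking $y_n=x_n-L^ny$ to the inhomogeneous recursion, the same scaling by $\delta/M$ to turn a bounded driving sequence into a $\delta$-pseudo orbit, and the same choice $\delta=\epsilon/K$ (resp. $K=1/\delta$) in the two directions. Your explicit treatment of the backward extension of the recursion via $L^{-1}$ is a detail the paper leaves implicit, but the argument is the same.
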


\begin{proof}
First assume that $L$ has the shadowing property and take $\delta>0$ from this property for $\epsilon=1$. We shall prove that $K=\frac{1}\delta$ is a shadowableness constant.
Let $(z_n)_{n\in\mathbb{Z}}$ be a bounded sequence and define $M=\sup_{n\in\mathbb{Z}}\|z_n\|$. We can assume that $M\neq0$. Take a sequence
$(x_n)_{n\in\mathbb{Z}}$ such that
$x_{n+1}=Lx_n+\frac{\delta}Mz_n$ for $n\in\mathbb{Z}$
(this sequence is completely determined by $x_0$).
Clearly $\|Lx_n-x_{n+1}\|\leq\delta$ for $n\in \mathbb{Z}$ hence there is $x\in X$ such that
$\|L^n(x)-x_n\|\leq 1$ for all $n\in\mathbb{Z}$.
Define $y_n=\frac{M}\delta(x_n-L^nx)$ for $n\in \mathbb{Z}$.
Then,
$\|y_n\|\leq \frac{M}\delta=K\sup_{n\in\mathbb{Z}}\|z_n\|$ for all $n\in\mathbb{Z}$ hence
$$
\sup_{n\in\mathbb{Z}}\|y_n\|\leq K\sup_{n\in\mathbb{Z}}\|Z_n\|.
$$
Also
$$
y_{n+1}=\frac{M}\delta(x_{n+1}-L^{n+1}x)=\frac{M}\delta(Lx_n+\frac{\delta}Mz_n-L^{n+1}x)=Ly_n+z_n,
$$
so $y_{n+1}=L(y_n)+z_n$ for all $n\in\mathbb{Z}$. Therefore, $K$ is a shadowableness constant.

Conversely,
suppose that there is a shadowableness constant $K$. Fix the given $\epsilon>0$ we set
$\delta=\frac{\epsilon}K$. Let $(x_n)_{n\in\mathbb{Z}}$ such that
$\|Lx_n-x_{n+1}\|\leq \delta$ for $n\in\mathbb{Z}$.
Take $z_n=Lx_n-x_{n+1}$ so $(z_n)_{n\in\mathbb{Z}}$ satisfies
$\sup_{n\in \mathbb{Z}}\|z_n\|\leq\delta$ and so it is bounded.
Then, there is $(y_n)_{n\in\mathbb{Z}}$ such that
$\sup_{n\in\mathbb{Z}}\|y_n\|\leq K\delta$ and $y_{n+1}=Ly_n+z_n$.
Set $x=y_0-x_0$.
Since
$y_{n+1}=Ly_n+Lx_n-x_{n+1}$, $y_{n+1}-x_{n+1}=L(y_n+x_n)=\cdots =L^{n+1}(x)$ for all $n\in \mathbb{Z}$ hence
$$
L^nx=y_n+x_n, \quad\quad\forall n\in\mathbb{Z}.
$$
Then,
$$
\|L^nx-x_n\|=\|y_n\|\leq \sup_{n\in\mathbb{Z}}\|y_n\|\leq K\delta=\epsilon,\quad\quad\forall n\in\mathbb{Z}
$$
proving the shadowing property for $L$. This ends the proof.
\qed
\end{proof}

Now we establish some properties of $Shad(L)$.
If $X_1,X_2$ are (complex) vector spaces the product $X_1\times X_2$ is a vector space
with the standard operations
$(x_1,x_2)+(x_1',x_2')=(x_1+x_1',x_2+x_2')$ and $\lambda(x_1,x_2)=(\lambda x_1,\lambda x_2)$
for $(x_1,x_2),(x_1',x_2')\in X_1\times X_2$ and $\lambda\in\mathbb{C}$.
If additionally $X_1$ and $X_2$ are Banach spaces, then so does
$X_1\times X_2$ if endowed with the norm $\|(x_1,x_2)\|=\max\{\|x_1\|,\|x_2\|\}$.

\begin{lem}
\label{arab}
$\,$
\begin{enumerate}
\item If $X$ and $Y$ are Banach spaces, $L\in GL(X)$ and $H:X\to Y$ is a linear homeomorphism, then
$$
(\|H\|\|H^{-1}\|)^{-1} Shad(L)\leq Shad(H^{-1}\circ L\circ H)\leq\|H\|\|H^{-1}\| Shad(L).
$$
\item
If $X_1,X_2$ are Banach spaces and $L_i\in GL(X_i)$ for $i=1,2$,
then
$$
Shad(L_1\times L_2)=\max\{Shad(L_1),Shad(L_2)\}.
$$
\item
If $L\in GL(X)$, then
$$
\|L^{-1}\|^{-1}Shad(L^{-1})\leq Shad(L)\leq \|L\|Shad(L^{-1}).
$$
\end{enumerate}
\end{lem}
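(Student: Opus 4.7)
Each inequality will be proved by a direct construction: given a bounded sequence serving as input to one operator's shadowableness problem, I transform it through a natural linear map (conjugation, embedding/projection, or time reversal), invoke the assumed shadowableness constant of the simpler operator to solve the forward recurrence there, and translate the bounded solution back. Since $Shad(\cdot)$ is defined as an infimum of shadowableness constants, each of the three inequalities reduces to exhibiting one such constant of the required size.

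For (1), starting with a bounded sequence $(z_n)$ in the space on which the conjugate operator acts, I push it across $H$ (in whichever direction makes the domain match $X$), apply a shadowableness constant $K$ of $L$ to the image to obtain $(u_n)$ satisfying the forward $L$-recurrence on $X$, and pull back with the other direction of $H$. Linearity of $H$ ensures that the $L$-recurrence on $X$ translates \emph{exactly} to the conjugate recurrence on the other space, while the norm bound picks up one factor of $\|H\|$ and one of $\|H^{-1}\|$ in the right places. Running the same argument starting from the conjugate operator yields the reverse inequality.

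For (2), I use the max norm on $X_1\times X_2$ as defined just before the lemma. To bound $Shad(L_1\times L_2)\leq \max\{Shad(L_1),Shad(L_2)\}$, I split a bounded sequence $((z_n^{(1)},z_n^{(2)}))$ into its two coordinate sequences, apply shadowableness of each $L_i$ separately, and recombine; the max norm matches up perfectly with taking the maximum of the two shadowableness constants. For the reverse direction, given a bounded sequence in one factor, I embed it into the product by padding with zero in the other factor, apply shadowableness of $L_1\times L_2$, and read off the corresponding coordinate of the resulting solution; this delivers $Shad(L_i)\leq Shad(L_1\times L_2)$ for each $i$, hence the maximum bound.

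For (3), the central move is time reversal. The $L^{-1}$-recurrence $y_{n+1}=L^{-1}y_n+z_n$ rearranges, after multiplying by $L$, to $y_n=Ly_{n+1}-Lz_n$; setting $u_m=y_{-m}$ converts this into a forward $L$-recurrence $u_{m+1}=Lu_m+\tilde w_m$ with $\tilde w_m=-Lz_{-m-1}$, whose source satisfies $\sup_m\|\tilde w_m\|\leq\|L\|\sup_n\|z_n\|$. Applying shadowableness of $L$ to $(\tilde w_m)$ and reversing time back yields the required bounded $(y_n)$; the partner inequality follows by swapping $L\leftrightarrow L^{-1}$ and using $(L^{-1})^{-1}=L$. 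The main bookkeeping hurdle is precisely in part (3): the time-reversal indices must be tracked consistently, and one must ensure the right operator norm accompanies each pass through $L$ or $L^{-1}$. Parts (1) and (2) are essentially routine once the transformation maps are in place.
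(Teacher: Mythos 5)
Your plan is correct and follows essentially the same route as the paper's proof: transporting bounded sequences through $H$ and $H^{-1}$ for the conjugacy bounds, zero-padding and coordinate-splitting under the max norm for the product, and the time reversal $u_m=y_{-m}$ with source $-Lz_{-m-1}$ for the inverse, each combined with the infimum definition of $Shad$. The explicit computation you give in part (3) matches the paper's construction exactly.
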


\begin{proof}
To prove Item (1) we first prove
\begin{equation}
\label{nepo}
Shad(H^{-1}\circ L\circ H)\leq \|H\|\|H^{-1}\| Shad(L).
\end{equation}
If $Shad(L)=\infty$ nothing to prove so we can assume $Shad(L)<\infty$.
Let $\epsilon>0$ then there is a shadowableness constant $K< Shad(L)+\epsilon$ for $L$.
Take a bounded sequence $(x_n)_{n\in\mathbb{Z}}$ in $X$.
Then, $(H(x_n))_{n\in\mathbb{Z}}$ is bounded in $Y$ so there is
a sequence $(y_n)_{n\in\mathbb{Z}}$ in $Y$ such that
$$
\sup_{n\in\mathbb{Z}}\|y_n\|\leq K\sup_{n\in\mathbb{Z}}\|H(x_n)\|
\quad\mbox{ and }\quad y_{n+1}=L(y_n)+H(x_n),\quad\quad\forall n\in\mathbb{Z}.
$$
Then, the sequence $(H^{-1}(y_n))_{n\in\mathbb{Z}}$ satisfies
$$
\sup_{n\in\mathbb{Z}}\|H^{-1}(y_n)\|\leq \|H^{-1}\| K\sup_{n\in\mathbb{Z}}\|H(x_n)\|\leq
\|H\| \|H^{-1}\| K\sup_{n\in\mathbb{Z}}\|x_n\|
$$
and
$$
H^{-1}(y_{n+1})=H^{-1}(L(y_n)+H(x_n))=(H^{-1}\circ L\circ H)(H^{-1}(y_n))+x_n,\quad\quad\forall n\in\mathbb{Z}
$$
proving that $\|H\|\|H^{-1}\| K$ is a shadowableness constant of $H^{-1}\circ L\circ H$.
It follows that
$$
Shad(H^{-1}\circ L\circ H)\leq \|H\|\|H^{-1}\|K\leq \|H\|\|H^{-1}\|(Shad(L)+\epsilon).
$$
Letting $\epsilon\to0$ we get \eqref{nepo}.
From this we obtain
$$
Shad(L)=Shad(H\circ (H^{-1}\circ L\circ H)\circ H^{-1})\leq \|H\|\|H^{-1}\| Shad(H^{-1}\circ L\circ H)
$$
proving Item (1).

To prove Item (2) we first show
\begin{equation}
\label{f4}
\max\{Shad(L_1),Shad(L_2)\}\leq Shad(L_1\times L_2).
\end{equation}
Again if $Shad(L_1\times L_2)=\infty$ nothing to prove.
Otherwise,
take $\epsilon>0$ and a shadowableness constant $K\leq Shad(L_1\times L_2)+\epsilon$.
Let $(z^1_n)_{n\in\mathbb{Z}}$ be a bounded sequence in $X^1$.
Then, $((z^1_n,0))_{n\in\mathbb{Z}}$ is bounded in $X^1\times X^2$ so there is $((x^1_n,x^2_n))_{n\in\mathbb{Z}}$ such that
$$
\sup_{n\in\mathbb{Z}}\|(x^1_n,x^2_n)\|\leq K\sup_{n\in\mathbb{Z}}\|(z^1_n,0)\|
\mbox{ and }
(x^1_{n+1},x^2_{n+1})=(L_1\times L_2)(x^1_n,x^2_n)+(z^1_n,0),
$$
for all $n\in\mathbb{Z}.$
It follows that
$$
\sup_{n\in\mathbb{Z}}\|x^1_n\|\leq \sup_{n\in\mathbb{Z}}\|(x^1_n,x^2_n)\|\leq K\sup_{n\in\mathbb{Z}}\|(z^1_n,0)\|=K\sup_{n\in\mathbb{Z}}\|z^1_n\|
$$
and $x^1_{n+1}=L_1(x_n)+z^1_n$ for all $n\in\mathbb{Z}$ proving
that $K$ is a shadowableness constant of $L_1$.
Then,
$Shad(L_1)\leq K<Shad(L_1\times L_2)+\epsilon$.
Letting $\epsilon\to0$ we get
$Shad(L_1)\leq Shad(L_1\times L_2)$.
Likewise, $Shad(L_2)\leq Shad(L_1\times L_2)$ proving \eqref{f4}.
Finally, we prove
\begin{equation}
\label{f5}
Shad(L_1\times L_2)\leq \max\{Shad(L_1),Shad(L_2)\}.
\end{equation}
If $Shad(L_i)=\infty$ for some $i=1,2$ nothing to prove.
Then, we can assume $Shad(L_i)<\infty$ for all $i=1,2$.
Fix $\epsilon>0$ and take
shadowableness constants $K_i<Shad(L_i)+\epsilon$ of $L_i$ for $i=1,2$.
Let $((z^1_n,z^2_n))_{n\in\mathbb{Z}}$ be bounded in $X_1\times X_2$.
Then, $(z^i_n)_{n\in\mathbb{Z}}$ is bounded in $X_i$ for $i=1,2$ so there is
$(y^i_n)_{n\in\mathbb{Z}}$ in $X_i$ such that
$$
\sup_{n\in\mathbb{Z}}\|y^i_n\|\leq K_i\sup_{n\in\mathbb{Z}}\|z^i_n\|
\quad\mbox{ and }\quad
x^i_{n+1}=L_i(x_n)+z^i_n,\quad\forall n\in\mathbb{Z}, i=1,2.
$$
It follows that
\begin{eqnarray*}
\sup_{n\in\mathbb{Z}}\|(y^1_n,y^2_n)\| & = & \sup_{n\in\mathbb{Z}}\max\{\|y^1_n\|,\|y^2_n\|\} \\
& \leq & \max\{K_1,K_2\}\sup_{n\in\mathbb{Z}}\max\{\|z^1_n\|,\|z^2_n\|\} \\
& = & \max\{K_1,K_2\}\sup_{n\in\mathbb{Z}}\|(z^1_n,z^2_n)\|
\end{eqnarray*}
and
$$
(y^1_{n+1},y^2_{n+1})=(L_1(y^1_n)+z^1_n,L_2(y^2_n)+z^2_n)=(L_1\times L_2)(y^1_n,y^2_n)+(z^1_n,z^2_n),
$$
for every $n\in\mathbb{Z}$.
This proves that $\max\{K_1,K_2\}$ is a shadowableness constant of $L_1\times L_2$
so
$$
Shad(L_1\times L_2)\leq \max\{K_1,K_2\}\leq \max\{Shad(L_1)+\epsilon,Shad(L_2)+\epsilon\}.
$$
Letting $\epsilon\to0$ we get \eqref{f5}. Then, Item (2) holds.

To prove Item (3) let $\hat{K}$ be a shadowableness constant of $L$.
We shall prove that $K=\|L\|\hat{K}$ is a shadowableness constant of $L^{-1}$.
If $(z_n)_{n\in\mathbb{Z}}$ is bounded, so does
$(-L(z_{-n-1}))_{n\in\mathbb{Z}}$ then there is a sequence
$(\hat{y}_n)_{n\in\mathbb{Z}}$ such that
$$
\sup_{n\in\mathbb{Z}}\|\hat{y}_n\|\leq \hat{K}\sup_{n\in\mathbb{Z}}\|L(z_{-n-1})\|
\quad\mbox{ and }\quad
\hat{y}_{n+1}=L(\hat{y}_n)-L(z_{-n-1}),\quad\forall n\in\mathbb{Z}.
$$
Defining
$y_n=\hat{y}_{-n}$ for $n\in\mathbb{Z}$ we get a sequence $(y_n)_{n\in\mathbb{Z}}$ such that
$$
\sup_{n\in\mathbb{Z}}\|y_n\|\leq K\sup_{n\in\mathbb{Z}}\|z_n\|
$$
and
$$
y_{n+1}=\hat{y}_{-n-1}=L^{-1}(\hat{y}_{-n})+z_n=L^{-1}(y_n)+z_n,\quad\forall n\in\mathbb{Z}
$$
proving that $K$ is a shadowableness constant of $L^{-1}$
It follows that
$$
Shad(L^{-1})\leq \|L\|Shad(L).
$$
It follows that
$
Shad(L)=Shad((L^{-1})^{-1})\leq \|L^{-1}\| Shad(L^{-1})
$
so
$$
\|L^{-1}\|^{-1}Shad(L^{-1})\leq Shad(L^{-1}).
$$
This completes the proof.
\qed
\end{proof}

An application of the above lemma is the following result.

\begin{theo}
\label{agripino}
Let $X$ a Banach space and $L\in GL(X)$.
If there is a direct sum $X=X_1\oplus X_2$ by closed subspaces $X_1,X_2$ with
$L(X_i)=X_i$ for $i=1,2$, then $L$ has the shadowing property if and only if $L|_{X_1}$ and $L|_{X_2}$ do.
\end{theo}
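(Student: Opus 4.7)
The plan is to reduce the statement to the product formula for $Shad$ via a conjugacy. Define the linear map $H\colon X_{1}\times X_{2}\to X$ by $H(x_{1},x_{2})=x_{1}+x_{2}$. Because the sum $X=X_{1}\oplus X_{2}$ is direct with both summands closed, the open mapping theorem (applied to $H$, which is clearly bounded and bijective) yields that $H$ is a linear homeomorphism. Since $L(X_{i})=X_{i}$, the restrictions $L|_{X_{1}}$ and $L|_{X_{2}}$ lie in $GL(X_{1})$ and $GL(X_{2})$ respectively, and a direct computation on $(x_{1},x_{2})$ shows
\[
L\circ H=H\circ(L|_{X_{1}}\times L|_{X_{2}}),\qquad\text{i.e.}\qquad L=H\circ(L|_{X_{1}}\times L|_{X_{2}})\circ H^{-1}.
\]

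With this conjugacy in hand I would invoke Lemma \ref{arab} twice. Item (1) of that lemma applied to the conjugation by $H$ gives
\[
(\|H\|\|H^{-1}\|)^{-1}Shad(L|_{X_{1}}\times L|_{X_{2}})\leq Shad(L)\leq \|H\|\|H^{-1}\|\,Shad(L|_{X_{1}}\times L|_{X_{2}}),
\]
so $Shad(L)$ is finite if and only if $Shad(L|_{X_{1}}\times L|_{X_{2}})$ is finite. Then Item (2) yields
\[
Shad(L|_{X_{1}}\times L|_{X_{2}})=\max\{Shad(L|_{X_{1}}),Shad(L|_{X_{2}})\},
\]
so the latter is finite if and only if both $Shad(L|_{X_{i}})$ are finite. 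Finally, Lemma \ref{l1-2} translates finiteness of $Shad$ into the shadowing property: $L$ has the shadowing property iff $Shad(L)<\infty$, and similarly for each $L|_{X_{i}}$. Chaining these equivalences produces the desired biconditional.

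The only subtle step is the first one, verifying that $H$ is a linear homeomorphism. Boundedness of $H$ is immediate from $\|H(x_{1},x_{2})\|\leq\|x_{1}\|+\|x_{2}\|\leq 2\|(x_{1},x_{2})\|$, and bijectivity is exactly the direct sum condition. The continuity of $H^{-1}$—equivalently, the boundedness of the two projections $P_{i}\colon X\to X_{i}$—is where one needs that $X_{1}$ and $X_{2}$ are closed, invoking the open mapping (or closed graph) theorem; once this is noted the remainder of the proof is a completely formal chain of applications of the previous two lemmas.
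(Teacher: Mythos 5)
Your proposal is correct and follows essentially the same route as the paper: conjugate $L$ to the product operator $L|_{X_1}\times L|_{X_2}$ via the map built from the direct sum, then chain Item (1) and Item (2) of Lemma \ref{arab} with Lemma \ref{l1-2}. The only cosmetic differences are that you orient the conjugacy as $H\colon X_1\times X_2\to X$ rather than the reverse, and that you make explicit the appeal to the open mapping theorem for the boundedness of the projections, which the paper uses implicitly.
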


\begin{proof}
Let $P_i:X\to X_i$ be the projections ($i=1,2$) associated to the splitting $X=X_1\oplus X_2$.
Define $H:X\to X_1\times X_2$ by $H(x)=(P_1(x),P_2(x))$.
Clearly $H$ is a linear map.
Since $X=X_1\oplus X_2$ we can see that $H$ is bijective with inverse
$H^{-1}(x_1,x_2)=x_1+x_2$ for $(x_1,x_2)\in X_1\times X_2$.
Then,
$$
\|H^{-1}\|\leq 2.
$$
Also, since
$$
\|H(x)\| = \max\{\|P_1(x)\|,\|P_2(x)\|\} \leq \max\{\|P_1\|,\|P_2\|\}\|x\|,
\quad\quad\forall x\in X,
$$
$H$ is a bounded operator with
$$
\|H\|\leq \max\{\|P_1\|,\|P_2\|\}.
$$
 Then, $H$ is a linear homeomorphism.
On the other hand,
\begin{eqnarray*}
H^{-1}\circ (L|_{X_1}\times L|_{X_2})\circ H(x) & = & H^{-1}((L|_{X_1}\times L|_{X_2})(P_1(x),P_2(x)))\\
& = & H^{-1}(L(P_1(x)),L(P_2(x)))\\
& = & L(P_1(x))+L(P_2(x))\\
& = & L(P_1(x)+P_2(x))\\
& = & L(x),\quad\quad\forall x\in X
\end{eqnarray*}
proving
$$
H^{-1}\circ (L|_{X_1}\times L_{X_2})\circ H=L.
$$
Then,
$$
\max\{Shad(L|_{X_1}), Shad(L|_{X_2})\}\leq 2\max\{\|P_1\|,\|P_2\|\}Shad(L)
$$
and
$$
Shad(L)\leq 2\max\{\|P_1\|,\|P_2\|\} \max\{Shad(L|_{X_1}), Shad(L|_{X_2})\}
$$
by Lemma \ref{arab}.
Therefore, $L$ has the shadowing property $\iff$ $Shad(L)<\infty$ (by Lemma \ref{l1-2})
$\iff$ $Shad(L|_{X_i})<\infty$ for $i=1,2$ $\iff$ $L|_{X_i}$ has the shadowing property for $i=1,2$.
This completes the proof.
\qed
\end{proof}

\begin{remark}
\label{the}
Since the identities $L(X_1)=X_1$ and $L(X_2)=X_2$ hold,
$(L(P_1(x)),L(P_2(x)))\in X_1\times X_2$
so $H^{-1}(L(P_1(x)),L(P_2(x)))= L(P_1(x))+L(P_2(x))$.
The question arise what if we change these identities by inclusions.
\end{remark}

In the following result we estimate the shadowableness constant.

\begin{theo}
\label{teo}
Let $X$ be a Banach space and $L\in GL(X)$.
If $X=S\oplus U$ is a splitting into closed subsets $S$, $U$ of $X$ such that
$L(S)\subset S$ and $L^{-1}(U)\subset U$, then
\begin{equation}
\label{kgb}
Shad(L)\leq \|P_S\|\displaystyle\sum_{k=0}^\infty\|L^k|_S\|+\|P_U\|\displaystyle\sum_{k=1}^\infty\|L^{-k}|_U\|,
\end{equation}
where $P_S:X\to S$ and $P_U:X\to U$ are the projections associated to the splitting $X=S\oplus U$.
If additionally $\displaystyle\sum_{k=0}^\infty L^k|_S$ and $\displaystyle\sum_{k=1}^\infty L^{-k}|_U$ are strongly convergent, then
\begin{equation}
\label{egito}
\max\left\{\left\|\displaystyle\sum_{k=0}^\infty L^k|_S\right\|,
\left\|\displaystyle\sum_{k=0}^\infty L^{-k}|_U\right\|\right\}\leq Shad(L).
\end{equation}
\end{theo}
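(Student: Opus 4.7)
The plan is to manufacture an explicit bounded solution of the inhomogeneous recursion $y_{n+1}=Ly_n+z_n$ by stable/unstable series and then read off both inequalities from it.

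\emph{Proof of (\ref{kgb}).} Given a bounded sequence $(z_n)_{n\in\mathbb Z}$, I split $z_n=P_S z_n+P_U z_n$ and define
\[
y_n^S=\sum_{k=0}^\infty L^k|_S\bigl(P_S z_{n-1-k}\bigr),\qquad
y_n^U=-\sum_{k=1}^\infty L^{-k}|_U\bigl(P_U z_{n+k-1}\bigr),
\]
with $y_n=y_n^S+y_n^U$. Both series converge absolutely as soon as the right-hand side of (\ref{kgb}) is finite (otherwise the bound is vacuous), and since $L(S)\subset S$ and $L^{-1}(U)\subset U$ with $S$, $U$ closed, we have $y_n^S\in S$ and $y_n^U\in U$. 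The identity $y_{n+1}=Ly_n+z_n$ then reduces to two telescoping computations. The $S$-side is immediate. The $U$-side needs a little care, because $L(U)$ is not assumed contained in $U$; however, a term-by-term expansion yields
\[
Ly_n^U=-P_U z_n-\sum_{j=1}^\infty L^{-j}|_U\bigl(P_U z_{n+j}\bigr),
\]
which lies back in $U$ and combines with $P_U z_n$ to produce $y_{n+1}^U$. The triangle inequality, together with $\|P_S z_n\|\le\|P_S\|\sup_m\|z_m\|$ and $\|P_U z_n\|\le\|P_U\|\sup_m\|z_m\|$, then gives
\[
\sup_n\|y_n\|\le\Bigl(\|P_S\|\sum_{k=0}^\infty\|L^k|_S\|+\|P_U\|\sum_{k=1}^\infty\|L^{-k}|_U\|\Bigr)\sup_m\|z_m\|,
\]
so the bracketed number is a shadowableness constant of $L$ and Lemma \ref{l1-2} closes this direction.

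\emph{Proof of (\ref{egito}).} Assume now that $T_S:=\sum_{k\ge 0}L^k|_S$ is strongly convergent, so it defines a bounded operator on $S$ by the uniform boundedness principle; moreover the convergence forces $L^k x\to 0$ for every $x\in S$, hence $S\subset E^{cs}$ (and symmetrically $U\subset E^{cu}$ from the unstable hypothesis). Fix $x\in S$ with $\|x\|=1$, let $K$ be any finite shadowableness constant of $L$, and apply shadowableness to the particular source $z_n=x$ for $n\le-1$, $z_n=0$ for $n\ge 0$, so that $\sup_m\|z_m\|=1$. Specializing the formula from the previous paragraph to this $(z_n)$ gives the concrete bounded solution with $y_n^U\equiv 0$ (since $P_U z_n\equiv 0$), $y_n^S=T_Sx$ for $n\le 0$, and $y_n^S=L^nT_Sx\to 0$ for $n\ge 1$; in particular $\|y_0\|=\|T_Sx\|$. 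Any other bounded solution of the same recursion differs from this canonical one by a sequence of the form $(L^nw_0)_{n\in\mathbb Z}$ with $w_0\in E^{cs}\cap E^{cu}$; using that $T_Sx\in S\subset E^{cs}$ forces $L^nT_Sx\to 0$ as $n\to+\infty$, a direct comparison of $\|T_Sx+L^nw_0\|$ at $n=0$ with the behavior at $|n|\to\infty$ shows no such perturbation can push the supremum below $\|T_Sx\|$. Hence $\|T_Sx\|\le K$, and letting $x$ range over the unit sphere of $S$ and infimizing over $K$ gives $\|T_S\|\le Shad(L)$. Running the parallel construction with $x\in U$ and $z_n=x$ for $n\ge 0$ handles the unstable sum.

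\emph{Main obstacle.} The delicate step is the lower bound: controlling $\sup_n\|y_n\|$ over \emph{every} bounded solution and not just the canonical one, in the presence of the non-uniqueness caused by $E^{cs}\cap E^{cu}\ne\{0\}$. This is exactly where the strong convergence hypothesis enters essentially, since it places $T_Sx$ in $E^{cs}$ and thereby limits the cancellation that a central perturbation $w_0$ can produce at $n=0$ without exploding at $n\to+\infty$.
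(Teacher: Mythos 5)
Your proof of the upper bound \eqref{kgb} is correct and is essentially the paper's proof: the same explicit series solution $y_n=\sum_{k\geq0}L^kP_S(z_{n-k-1})-\sum_{k\geq1}L^{-k}P_U(z_{n+k-1})$, the same telescoping verification of $y_{n+1}=Ly_n+z_n$ (your computation of $Ly_n^U$ is the right way to handle the fact that $L(U)\not\subset U$), and the same norm estimate, closed by Lemma \ref{l1-2}.

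The lower bound \eqref{egito} has a genuine gap, and it sits exactly where you flag the ``main obstacle''. You test a shadowableness constant against the one-sided source $z_n=x$ for $n\leq-1$, $z_n=0$ for $n\geq0$, whose canonical solution is $y_n=T_Sx$ for $n\leq0$ and $y_n=L^nT_Sx\to0$ for $n\geq0$, and you claim that no central perturbation $(L^nw_0)_{n\in\mathbb{Z}}$ with $w_0\in E^{cs}\cap E^{cu}$ can push $\sup_n\|y_n+L^nw_0\|$ below $\|T_Sx\|$. That claim is false. Take the operator of Example \ref{lockdown}: $X=l^1(\mathbb{Z})$, $L(e_j)=\alpha e_{j-1}$ for $j\leq0$ and $L(e_j)=\alpha^{-1}e_{j-1}$ for $j\geq1$, with $S=\overline{Span}\{e_j:j\leq0\}$, $U=\overline{Span}\{e_j:j\geq1\}$ and $0<\alpha<1$; both series are norm convergent here. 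For $x=e_0$ one gets $T_Sx=\sum_{k\geq0}\alpha^ke_{-k}$ with $\|T_Sx\|=(1-\alpha)^{-1}$, and a direct computation gives $L^{-m}T_Sx=T_Sx+\sum_{j=1}^m\alpha^je_j$, so $w_0:=-T_Sx$ has bounded orbit in both time directions, i.e. $w_0\in E^{cs}\cap E^{cu}$. The perturbed solution $\tilde y_n=y_n+L^nw_0$ is then identically $0$ for $n\geq0$ and equals $-\sum_{j=1}^{|n|}\alpha^je_j$ for $n\leq0$, whence $\sup_n\|\tilde y_n\|=\alpha(1-\alpha)^{-1}<\|T_Sx\|$. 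So a shadowableness constant $K$ may legitimately return this small solution for your source, and you cannot conclude $\|T_Sx\|\leq K$. The cancellation is possible precisely because your reference solution itself decays to $0$ as $n\to+\infty$: nothing anchors the supremum at $\|T_Sx\|$. The paper avoids this by using the two-sided constant source $z_n=x$ for all $n\in\mathbb{Z}$, whose reference solution is the constant sequence $y_n\equiv T_Sx$ (note $T_Sx=L(T_Sx)+x$); any bounded solution then satisfies $y_n=L^n(y_0)+\sum_{k=0}^{n-1}L^k(x)$ for $n\geq1$, and once $L^n(y_0)\to0$ is secured one gets $y_n\to T_Sx$, hence $\|T_Sx\|=\lim_n\|y_n\|\leq\sup_n\|y_n\|\leq K\|x\|$. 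If you want to repair your argument, switch to this source (and note that even then one must justify $L^n(y_0)\to0$, i.e. control the stable component of $y_0$, which is the one remaining delicate point in the paper's own write-up).
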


\begin{proof}
To prove the first inequality we can assume
$$
A=\displaystyle\sum_{k=0}^\infty \|L^k|_A\|<\infty\quad\quad\mbox{ and }\quad\quad B=\displaystyle\sum_{k=1}^\infty\|L^{-k}|_B\|<\infty.
$$
We assert that
$K=\|P_S\|A+\|P_U\| B$
is a shadowableness constant of $L$.

Take a bounded sequence
$(z_n)_{n\in\mathbb{Z}}$ and define the sequence
$(y_n)_{n\in\mathbb{Z}}$ by 
$$
y_n=\displaystyle\sum_{k=0}^\infty L^kP_S(z_{n-k-1})-\displaystyle\sum_{k=1}^{\infty}L^{-k}P_U(z_{n+k-1}),\quad\quad\forall n\in\mathbb{Z}.
$$
We have
\begin{eqnarray*}
\|y_n\| & \leq & \displaystyle\sum_{k=0}^\infty \|L^k|_S\|\|P_S(z_{n-k-1})\|+\displaystyle\sum_{k=1}^\infty \|L^{-k}|_U\| \|P_U(z_{n+k-1})\| \\
& \leq & (A\|P_S\|+B\|P_U\|)\sup_{n\in\mathbb{Z}}\|z_n\| \\
& \leq & K\sup_{n\in\mathbb{Z}}\|z_n\|,\quad\quad\forall n\in\mathbb{Z}
\end{eqnarray*}
so
$$
\sup_{n\in\mathbb{Z}}\|y_n\|\leq K\sup_{n\in\mathbb{Z}}\|z_n\|.
$$
Also
\begin{eqnarray*}
y_{n+1}& = & \displaystyle\sum_{k=0}^\infty L^k P_S(z_{n-k})-\displaystyle\sum_{k=1}^\infty L^{-k}P_U(z_{n+k})\\
& = & \displaystyle\sum_{k=1}^\infty L^kP_S(z_{n-k})-\left(\displaystyle\sum_{k=1}^\infty L^{-k}P_U(z_{n+k})+P_U(z_n)\right)+z_n\\
& = &  \displaystyle\sum_{k=1}^\infty L^kP_S(z_{n-k})-\displaystyle\sum_{k=0}^\infty L^{-k}P_U(z_{n+k-1})+z_n\\
& = & L\left( \displaystyle\sum_{k=1}^\infty L^{k-1}P_S(z_{n-k})-\displaystyle\sum_{k=0}^\infty L^{-(k+1)}P_U(z_{n+k-1}) \right) + z_n \\
& = & L\left( \displaystyle\sum_{k=0}^\infty L^{k}P_S(z_{n-k-1})-\displaystyle\sum_{k=1}^\infty L^{-k}P_U(z_{n+k-1}) \right) + z_n \\
& = & Ly_n+z_n,\quad\quad\forall n\in\mathbb{Z}
\end{eqnarray*}
so $K$ is a shadowableness constant.
This proves \eqref{kgb}.

Now we assume that $\displaystyle\sum_{k=0}^\infty L^k|_S$ and $\displaystyle\sum_{k=1}^\infty L^{-k}|_U$ are strongly convergent. We can assume that $Shad(L)<\infty$.
Fix $\epsilon>0$, a shadowableness constant $K< Shad(L)+\epsilon$ and
$x\in S$. The constant sequence $(x)_{n\in\mathbb{Z}}$ is clearly bounded so
there is a sequence $(y_n)_{\in\mathbb{Z}}$ in $X$  with $y_0\in S$ such that
\begin{equation}
\label{higgs}
\sup_{n\in\mathbb{Z}}\|y_n\|\leq K\|x\|
\quad\quad\mbox{ and }\quad\quad y_{n+1}=L(y_n)+x,\quad\quad\forall n\in \mathbb{Z}.
\end{equation}
It follows that
$$
y_n=L^n(y_0)+\displaystyle\sum_{k=0}^{n-1} L^k(x), \quad\quad\forall n\in\mathbb{N}.
$$
But since $\displaystyle\sum_{k=0}^\infty L^k|_S$ is strongly
convergent, $L^n|S\to0$ strongly as $n\to\infty$ so $L^n(y_0)\to0$ thus
$$
\lim_{n\to\infty}y_n=\displaystyle\sum_{k=0}^\infty L^k(x).
$$
Then, the inequality in \eqref{higgs} implies
$$
\left\|
\displaystyle\sum_{k=0}^\infty L^k(x)
\right\|
=\|\lim_{n\to\infty}y_n\|=\lim_{n\to\infty}\|y_n\|
\leq \sup_{n\in\mathbb{Z}}\|y_n\|
\leq K\|x\|.
$$
Since $x\in S$ is arbitrary and $K< Shad(L)+\epsilon$,
$$
\left\|
\displaystyle\sum_{k=0}^\infty L^k|_S\right\|
\leq Shad(L)+\epsilon.
$$
Letting $\epsilon\to0$ above we get
$$
\left\|
\displaystyle\sum_{k=0}^\infty L^k|_S\right\|
\leq Shad(L).
$$
Likewise
$$
\left\|
\displaystyle\sum_{k=0}^\infty L^{-k}|_U\right\|
\leq Shad(L)
$$
so \eqref{egito} holds.
\qed
\end{proof}

%Since $r(L)<1$, Gelfand's spectral radius formula provides $c>0$ and $0<t<1$ such that
%$\|L^k\|\leq ct^k$ for every $k\geq0$. Then,
%$$
%\displaystyle\sum_{k=0}^\infty\|L^k\|<\infty.
%$$
%It follows that
%$\displaystyle\sum_{k=0}^\infty L^k$ is a well defined bounded operator of $X$.

Combining theorems \ref{agripino} and \ref{teo} we get the following corollary.

\begin{cor}
\label{zoom}
If $L\in GL(X)$ is {\em hyperbolic} i.e. there is a splitting $X=S\oplus U$ formed by closed subspaces $S$ and $U$ such that
$L(S)=S$, $L(U)=U$, $r(L|_S)<1$ and $r(L^{-1}|_U)<1$,
then $L$ has the shadowing property.
\end{cor}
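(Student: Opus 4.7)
The plan is to apply Theorem \ref{teo} directly to the hyperbolic splitting $X = S \oplus U$ and then invoke Lemma \ref{l1-2}. First I would note that the hypotheses of Theorem \ref{teo} are met: $L(S) = S$ gives $L(S) \subset S$, and $L(U) = U$ gives $L^{-1}(U) = U \subset U$. Since $X = S \oplus U$ is a topological direct sum of closed subspaces, the associated projections $P_S, P_U$ are bounded by the closed graph theorem, so $\|P_S\|$ and $\|P_U\|$ are finite.

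The key step is verifying that both series appearing in the upper bound
$$
Shad(L) \leq \|P_S\| \sum_{k=0}^\infty \|L^k|_S\| + \|P_U\| \sum_{k=1}^\infty \|L^{-k}|_U\|
$$
converge. For this I would use Gelfand's spectral radius formula: since $r(L|_S) < 1$, I pick $\rho \in (r(L|_S), 1)$; then $\|L^n|_S\|^{1/n} \to r(L|_S)$ gives $\|L^n|_S\| \leq \rho^n$ for all sufficiently large $n$, so $\sum_{k \geq 0} \|L^k|_S\| < \infty$ by comparison with a geometric series. The identical argument applied to $L^{-1}|_U$, using $r(L^{-1}|_U) < 1$, yields $\sum_{k \geq 1} \|L^{-k}|_U\| < \infty$. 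Consequently $Shad(L) < \infty$, and by Lemma \ref{l1-2} this is exactly the shadowing property for $L$.

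An equivalent route, more literally \emph{combining} Theorems \ref{agripino} and \ref{teo}, first applies Theorem \ref{agripino} to reduce to proving the shadowing property for $L|_S$ on $S$ and for $L|_U$ on $U$ separately, then applies Theorem \ref{teo} to each restriction using a trivial complementary factor ($S = S \oplus \{0\}$ for the stable part and $U = \{0\} \oplus U$ for the unstable part). I do not expect serious obstacles in either route: the only slightly delicate points are the boundedness of the projections, which is classical for direct sum decompositions into closed subspaces, and the summability of the norm series, which is immediate from Gelfand's formula once $r(L|_S) < 1$ and $r(L^{-1}|_U) < 1$ are in hand.
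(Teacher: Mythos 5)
Your proof is correct and follows essentially the paper's intended route: the paper derives the corollary by ``combining Theorems \ref{agripino} and \ref{teo}'', and your primary argument is just the streamlined version in which Theorem \ref{teo} (applied directly to the hyperbolic splitting, with convergence of the norm series from Gelfand's formula) together with Lemma \ref{l1-2} already suffices, so Theorem \ref{agripino} is not actually needed. Your verification of the hypotheses $L(S)\subset S$, $L^{-1}(U)\subset U$ and of the boundedness of the projections is exactly what the paper leaves implicit.
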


Taking Remark \ref{the} into account one is tempted to
change the identities $L(S)=S$ and $L(U)=U$ in the above corollary by inclusions.
In light of Proposition \ref{motiv1} we have to consider the case
$L(S)\subset S$ and $U\subset L(U)$ only.
By doing so we get the following concept from \cite{cgp}.

\begin{deff}
We say that $L\in GL(X)$ is {\em generalized hyperbolic} if there is a splitting $X=S\oplus U$ into closed subspaces $S$ and $U$ such that $L(S)\subset S$, $L^{-1}(U)\subset U$,
$r(L|_S)<1$ and $r(L^{-1}|U)<1$.
\end{deff}

Every hyperbolic linear homeomorphism is therefore generalized hyperbolic.
We will postpone the example of a linear homeomorphism which is generalized hyperbolic but not hyperbolic till Example \ref{lockdown}.
Meanwhile we will obtain some similarities between these kind of operators.
Indeed, we shall prove that Corollary \ref{zoom} is valid for generalized hyperbolic operators.
However, the proof cannot use Theorem \ref{agripino}.

\begin{theo}
\label{thA-2}
If $L\in GL(X)$ is generalized hyperbolic with associated splitting $X=S\oplus U$ and projections
$P_S:X\to S$ and $P_U:X\to U$, then
$$
Shad(L)\leq\|P_S\|\displaystyle\sum_{k=0}^\infty \|L^k|_S\|+\|P_U\|\displaystyle\sum_{k=1}^\infty\|L^{-k}|_U\|.
$$
In particular, $L$ has the shadowing property.
\end{theo}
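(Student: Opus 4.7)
The plan is to show that Theorem \ref{teo} applies verbatim to a generalized hyperbolic operator, since the hypotheses of that theorem---a splitting $X = S \oplus U$ into closed subspaces with $L(S) \subset S$ and $L^{-1}(U) \subset U$---are precisely the inclusion conditions built into the definition of generalized hyperbolicity. Thus the inequality stated in Theorem \ref{thA-2} is immediate from Theorem \ref{teo}, and the remaining task is to argue that the right-hand side is finite, which forces $Shad(L) < \infty$ and hence, by Lemma \ref{l1-2}, that $L$ has the shadowing property.

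To verify finiteness of the right-hand side, I would invoke Gelfand's spectral radius formula applied to the bounded operators $L|_S$ and $L^{-1}|_U$. Both are well-defined elements of $Op(S)$ and $Op(U)$ respectively, since $S$ and $U$ are closed subspaces of $X$ (hence Banach in their own right) and the inclusions $L(S)\subset S$ and $L^{-1}(U)\subset U$ are part of the generalized hyperbolicity data. Gelfand's formula gives
$$
\lim_{k\to\infty}\|L^k|_S\|^{1/k}=r(L|_S)<1\quad\text{and}\quad \lim_{k\to\infty}\|L^{-k}|_U\|^{1/k}=r(L^{-1}|_U)<1,
$$
so by fixing any $\rho$ with $\max\{r(L|_S), r(L^{-1}|_U)\}<\rho<1$ one obtains a constant $C>0$ such that $\|L^k|_S\|\leq C\rho^k$ and $\|L^{-k}|_U\|\leq C\rho^k$ for all $k\geq 0$. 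Both geometric series in the stated bound then converge, and the estimate of Theorem \ref{teo} yields a finite upper bound for $Shad(L)$.

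I do not anticipate a genuine obstacle in this argument; the substance of the result is already packaged into Theorem \ref{teo}, and generalized hyperbolicity slots neatly into its framework while the spectral radius bounds render the estimate nontrivial. The conceptual point, highlighted by the author through Remark \ref{the} and Proposition \ref{motiv1}, is that the shadowing estimate of Theorem \ref{teo} was formulated flexibly enough to tolerate the inclusions $L(S)\subset S$ and $L^{-1}(U)\subset U$ in place of equalities. Consequently no new technical machinery is needed beyond combining Theorem \ref{teo}, Gelfand's formula, and Lemma \ref{l1-2}.
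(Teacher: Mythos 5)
Your proposal is correct and follows essentially the same route as the paper: the paper's own proof of Theorem \ref{thA-2} simply repeats verbatim the construction of the correcting sequence $y_n=\sum_{k\geq 0}L^kP_S(z_{n-k-1})-\sum_{k\geq 1}L^{-k}P_U(z_{n+k-1})$ already carried out for Theorem \ref{teo}, whose hypotheses (a closed splitting with $L(S)\subset S$ and $L^{-1}(U)\subset U$) are indeed satisfied by any generalized hyperbolic operator, so citing Theorem \ref{teo} directly is legitimate. Your appeal to Gelfand's formula to obtain geometric decay of $\|L^k|_S\|$ and $\|L^{-k}|_U\|$, hence finiteness of the right-hand side and the shadowing property via Lemma \ref{l1-2}, is exactly the reasoning the paper uses.
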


\begin{proof}
As in the proof of Theorem \ref{teo} we have
$$
A=\displaystyle\sum_{k=0}^\infty\|L^k|_S\|<\infty
\quad\mbox{ and }\quad
B=\displaystyle\sum_{k=0}^\infty\|L^{-k}|_U\|<\infty.
$$
We shall prove that
$$
K=A\|P_S\|+B\|P_U\|
$$
is a shadowableness constant of $L$.

Take a bounded sequence
$(z_n)_{n\in\mathbb{Z}}$ and define the sequence
$(y_n)_{n\in\mathbb{Z}}$ by 
$$
y_n=\displaystyle\sum_{k=0}^\infty L^kP_S(z_{n-k-1})-\displaystyle\sum_{k=1}^{\infty}L^{-k}P_U(z_{n+k-1}),\quad\quad\forall n\in\mathbb{Z}.
$$
We have
\begin{eqnarray*}
\|y_n\| & \leq & \displaystyle\sum_{k=0}^\infty \|L^k|_S\|\|P_S(z_{n-k-1})\|+\displaystyle\sum_{k=1}^\infty \|L^{-k}|_U\| \|P_U(z_{n+k-1})\| \\
& \leq & (A\|P_S\|+B\|P_U\|)\sup_{n\in\mathbb{Z}}\|z_n\| \\
& \leq & K\sup_{n\in\mathbb{Z}}\|z_n\|,\quad\quad\forall n\in\mathbb{Z}
\end{eqnarray*}
so
$$
\sup_{n\in\mathbb{Z}}\|y_n\|\leq K\sup_{n\in\mathbb{Z}}\|z_n\|.
$$
Also
\begin{eqnarray*}
y_{n+1}& = & \displaystyle\sum_{k=0}^\infty L^k P_S(z_{n-k})-\displaystyle\sum_{k=1}^\infty L^{-k}P_U(z_{n+k})\\
& = & \displaystyle\sum_{k=1}^\infty L^kP_S(z_{n-k})-\left(\displaystyle\sum_{k=1}^\infty L^{-k}P_U(z_{n+k})+P_U(z_n)\right)+z_n\\
& = &  \displaystyle\sum_{k=1}^\infty L^kP_S(z_{n-k})-\displaystyle\sum_{k=0}^\infty L^{-k}P_U(z_{n+k-1})+z_n\\
& = & L\left( \displaystyle\sum_{k=1}^\infty L^{k-1}P_S(z_{n-k})-\displaystyle\sum_{k=0}^\infty L^{-(k+1)}P_U(z_{n+k-1}) \right) + z_n \\
& = & L\left( \displaystyle\sum_{k=0}^\infty L^{k}P_S(z_{n-k-1})-\displaystyle\sum_{k=1}^\infty L^{-k}P_U(z_{n+k-1}) \right) + z_n \\
& = & Ly_n+z_n,\quad\quad\forall n\in\mathbb{Z}
\end{eqnarray*}
so $K$ is a shadowableness constant.
Therefore, $L$ has the shadowing property by Lemma \ref{l1-2} and the proof follows.
\qed
\end{proof}

The converse of Theorem \ref{thA-2} was questioned by D'Aniello et al \cite{ddm}. More precisely,
they proposed the following question.

\begin{ques}
\label{q1}
Is every linear homeomorphism with the shadowing property of a Banach space generalized hyperbolic?
\end{ques}

About this question we can make the following remark.

\begin{rem}
It follows from the proof od Theorem \ref{thA-2} that every $L\in GL(X)$
exhibiting a splitting $X=S\oplus U$ by closed subspaces $S$ and $U$ such that $L(S)\subset S$, $L^{-1}(U)\subset U$;
$$
\displaystyle\sum_{k=0}^\infty\|L|_S\|<\infty\quad\mbox{ and }\quad \displaystyle\sum_{k=0}^\infty\|L^{-1}|_U\|<\infty
$$
has the shadowing property. For the moment such homeomorphisms will be referred to as {\em weakly hyperbolic}.
It is therefore tempting to obtain negative answer for the question as soon as we find a weakly hyperbolic linear homeomorphism which is not generalized hyperbolic.
Nevertheless, both concepts of hyperbolicity for linear homeomorphisms are indeed equivalent.
%(see for instace [Theorem 2.1 in Koliha]).
\end{rem}

Let us present a partial positive answer for the question.
We follow part of the recent work \cite{lm}.
To motivate
we first give a consequence of Theorem \ref{thA-2}.
Recall that $L\in GL(X)$ is {\em equicontinuous} if for every $\epsilon>0$ there is $\delta>0$ such that
if $x\in X$ and $\|x\|\leq\delta$, then $\|L^n(x)\|\leq\epsilon$ for all $n\in\mathbb{Z}$.
The {\em central direction} of $L$ is defined by $E^c=E^{cs}\cap E^{cu}$ namely
$$
E^c=\{x\in X:\sup_{n\in\mathbb{Z}}\|L^nx\|<\infty\}.
$$
This is clearly a linear subspace and $L(E^c)=E^c$. Moreover,
$L$ is expansive if and only if $E^c=\{0\}$ In particular, if $L$ is expansive, then $E^c$ is a closed subspace.
The converse is true for generalized hyperbolic operators namely the following result holds.

\begin{theo}
\label{thB-2}
If $L\in GL(X)$ is generalized hyperbolic and $E^c$ is closed, then $E^c=\{0\}$.
In particular, $L$ is expansive (hence hyperbolic).
\end{theo}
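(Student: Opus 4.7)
The proof should split into a key step showing $E^c=\{0\}$, followed by harvesting expansiveness and hyperbolicity from the earlier theorems. The key idea is to restrict to $E^c$ itself and play a uniform bound coming from Banach-Steinhaus against the geometric contraction built into the generalized-hyperbolic splitting of $X$.

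Since $E^c$ is closed it is a Banach space, and clearly $L(E^c)=E^c$, so $L|_{E^c}\in GL(E^c)$. By definition of $E^c$ the sequence $(L^n x)_{n\in\mathbb{Z}}$ is bounded for every $x\in E^c$, so Banach-Steinhaus applied to the family $\{L^n|_{E^c}\}_{n\in\mathbb{Z}}$ of bounded operators on $E^c$ yields a constant $M$ with $\|L^n|_{E^c}\|\le M$ for every $n\in\mathbb{Z}$. On the other hand, generalized hyperbolicity together with Gelfand's spectral radius formula gives constants $c>0$ and $0<\beta<1$ with $\|L^n|_S\|\le c\beta^n$ and $\|L^{-n}|_U\|\le c\beta^n$ for every $n\ge 0$.

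Take $x\in E^c$ and decompose $x=x_S+x_U$ via $X=S\oplus U$. The forward orbit of $x_U$ is controlled by $\|L^m x_U\|\le\|L^m x\|+\|L^m x_S\|\le M\|x\|+c\|x_S\|$, and the backward orbit by $\|L^{-m}x_U\|\le c\beta^m\|x_U\|$ (using $L^{-1}(U)\subset U$); an analogous estimate handles $x_S$. Hence both $x_S$ and $x_U$ lie in $E^c$, so $L^{-n}x_U$ and $L^n x_S$ lie in $E^c$ for every $n\ge 0$, making the uniform bound of the previous paragraph applicable:
\begin{equation*}
\|x_U\|=\|L^n(L^{-n}x_U)\|\le M\,\|L^{-n}x_U\|\le Mc\beta^n\|x_U\|,\qquad \|x_S\|=\|L^{-n}(L^n x_S)\|\le Mc\beta^n\|x_S\|.
\end{equation*}
Letting $n\to\infty$ forces $x_U=x_S=0$, so $x=0$ and $E^c=\{0\}$. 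Expansiveness of $L$ then follows from Lemma \ref{ele}, and expansiveness combined with the shadowing property guaranteed by Theorem \ref{thA-2} upgrades to hyperbolicity by Theorem \ref{thA-1}. The main obstacle is the bookkeeping verifying $x_S,x_U\in E^c$ so that the uniform Banach-Steinhaus bound can be applied to them; once that is in place, the collision between $M$ and the geometric decay $c\beta^n$ closes the proof at once.
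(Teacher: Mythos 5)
Your argument is correct, and the first half of it coincides with the paper's: both proofs use Banach--Steinhaus on the closed subspace $E^c$ to get a uniform bound $M$ on $\|L^n|_{E^c}\|$ for $n\in\mathbb{Z}$, and both verify that the splitting $X=S\oplus U$ respects $E^c$, i.e.\ that $x\in E^c$ forces $x_S,x_U\in E^c$. Where you diverge is in how $E^c$ is then killed. The paper packages the observation as ``$L|_{E^c}$ is generalized hyperbolic with splitting $(E^c\cap S)\oplus(E^c\cap U)$,'' invokes Theorem \ref{thA-2} to get the shadowing property for $L|_{E^c}$, and then uses the equicontinuity coming from Banach--Steinhaus together with the renorming/isometry argument (Lemma \ref{l2-A}) to conclude that a nontrivial $E^c$ is impossible. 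You instead close with a direct quantitative collision: since $L^{-n}x_U\in E^c$, the chain $\|x_U\|=\|L^n(L^{-n}x_U)\|\le M\|L^{-n}x_U\|\le Mc\beta^n\|x_U\|$ forces $x_U=0$, and symmetrically $x_S=0$. Your finish is more elementary and self-contained --- it needs neither the shadowing machinery of Theorem \ref{thA-2} nor the isometry lemma for the key step, only Gelfand's formula and the uniform bound --- while the paper's route has the advantage of exhibiting $L|_{E^c}$ as a generalized hyperbolic operator in its own right, which is the template reused in the proof of Theorem \ref{thC-2}. The concluding deductions (expansivity via Lemma \ref{ele}, hyperbolicity via Theorems \ref{thA-2} and \ref{thA-1}) are exactly as in the paper.
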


\begin{proof}
First we note that
$$
\sup_{n\in\mathbb{Z}}\|L^n(z^c)\|<\infty,\quad\quad\forall z^c\in E^c.
$$
So, the Banach-Steinhouse theorem implies
$$
\sup_{n\in\mathbb{Z}}\|(L|_{E^c})^n\|<\infty.
$$
Then, it follows that $L|_{E_c}$ is equicontinuous.
Now, let $P_S:X\to S$ and $P_U:X\to U$ be the projections
associated to the generalized hyperbolic splitting $X=S\oplus U$.
Given $z^c\in E^c$ we write
$z^c=P_Sz^c+P_Uz^c$.
Since
$\|L^{-n}(z^c-P_Sz^c)\|=\|L^{-n}P_Uz^c\|\to0$ as $n\to\infty$,
we have
$\sup_{n\geq 0}\|L^{-n}(z^c-P_Sz^c)\|<\infty$. Also
$$
\sup_{n\geq0}\|L^n(z^c-P_Sz^c)\|\leq\sup_{n\geq0}\|L^nz^c\|+\sup_{n\geq0}\|L^nP_Sz^c\|<\infty
$$
so
$\sup_{n\in\mathbb{Z}}\|L^n(z^c-P_Sz^c)\|<\infty$ proving
$$
P_Uz^c=z^c-P_Sz^c\in E^c.
$$
Likewise, $P_Sz^c\in E^c$ proving
$$
E^c=(E^c\cap S)\oplus (E^c\cap U).
$$
Clearly $E\cap S$ and $E\cap U$ are closed subspaces, $L(E^c\cap S)\subset E^c\cap S$, $L^{-1}(E^c\cap U)\subset E^c\cap U$, $r(L|_{E^c\cap S})<1$ and $r(L|_{E^c\cap U})<1$.
All together imply that $L|_{E^c}\in GL(E^c)$ is generalized hyperbolic. Then, it has the shadowing property by Theorem \ref{thA-2}.
This implies that $E^c=\{0\}$.
Therefore, $L$ is expansive (see the remark before Lemma \ref{l1-1}) and  the proof follows.
\qed
\end{proof}

In light of Question \ref{q1}, it is natural to think that Theorem \ref{thB-2} holds for linear homeomorphisms with the shadowing property too.
Indeed, we will prove that this is the case.

\begin{theo}
\label{thC-2}
If $L\in GL(X)$ has the shadowing property and $E^c$ is closed, then $E^c=\{0\}$.
In particular, $L$ is expansive (hence hyperbolic).
\end{theo}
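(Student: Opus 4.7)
The plan is to adapt the argument of Lemma~\ref{l2-A}, which shows that a nontrivial linear isometry cannot enjoy the shadowing property, to the operator $L|_{E^c}$. The heuristic is that when $E^c$ is closed, $L|_{E^c}$ becomes equicontinuous on a Banach space, so it behaves like an isometry (for an equivalent norm), which should be incompatible with shadowing unless $E^c$ is trivial. The obstacle is that, unlike in the generalized hyperbolic setting of Theorem~\ref{thB-2} where a hyperbolic splitting of $E^c$ itself is readily available, here we have no direct route to shadowing of $L|_{E^c}$. I would sidestep this by running the Lemma~\ref{l2-A}--style argument for $L$ on the ambient space $X$ and then verifying a posteriori that the shadowing point lies in $E^c$.

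First I would extract equicontinuity on $E^c$. Since $E^c$ is, by hypothesis, a closed subspace of the Banach space $X$ and every $x\in E^c$ satisfies $\sup_{n\in\mathbb{Z}}\|L^n x\|<\infty$ by definition, the uniform boundedness principle produces
$$
M:=\sup_{n\in\mathbb{Z}}\|L^n|_{E^c}\|<\infty.
$$
In particular $\|L^{-r}|_{E^c}\|\le M$ for every $r\in\mathbb{N}$. This is the only analytic input beyond the shadowing property itself.

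Next I would mimic the pseudo-orbit construction of Lemma~\ref{l2-A}. Fix $\epsilon=1$, let $\delta>0$ come from the shadowing property of $L$, and pick any $x\in E^c$. Choose $r\in\mathbb{N}$ so large that $q_i:=(i/r)x\in E^c$ satisfies $\|q_{i+1}-q_i\|=\|x\|/r\le\delta/M$, and set
$$
p_i=\begin{cases}0,& i<0,\\ L^i q_i,& 0\le i\le r,\\ L^i x,& i>r.\end{cases}
$$
The only nontrivial jumps occur for $0\le i<r$, where the estimate $\|L(L^i q_i)-L^{i+1}q_{i+1}\|=\|L^{i+1}(q_i-q_{i+1})\|\le M\|q_i-q_{i+1}\|\le\delta$ shows $(p_i)$ is a $\delta$-pseudo orbit of $L$. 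Shadowing then furnishes $z\in X$ with $\|L^i z-p_i\|\le 1$ for every $i\in\mathbb{Z}$.

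The main obstacle is to show $z\in E^c$, since shadowing was used only on $X$. I would read off the two tails of the estimate: for $i\le 0$ one has $\|L^i z\|\le 1$, so $z\in E^{cu}$, and for $i\ge r$ one has $\|L^i(z-x)\|\le 1$, so $z-x\in E^{cs}$, which together with $x\in E^c\subset E^{cs}$ forces $z\in E^{cs}$. Hence $z\in E^{cs}\cap E^{cu}=E^c$. Now $z-x\in E^c$, so applying $L^{-r}$ on $E^c$ converts $\|L^r(z-x)\|\le 1$ into $\|z-x\|\le M$; and the $i=0$ estimate gives $\|z\|\le 1$. The triangle inequality then yields $\|x\|\le 1+M$ for every $x\in E^c$. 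Because $E^c$ is a subspace (and the bound is also applied to arbitrary scalar multiples of any given $x$), this forces $E^c=\{0\}$. Lemma~\ref{ele} then shows $L$ is expansive, and Theorem~\ref{thA-1} upgrades expansivity plus shadowing to hyperbolicity.
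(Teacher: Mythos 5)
Your proof is correct, and it takes a genuinely different (and more economical) route than the paper. The paper first shows that the shadowing property of $L$ induces the \emph{bounded} shadowing property on $L|_{E^c}$ (Lemma \ref{l2-C}), then renorms $E^c$ by $\|z\|'=\sup_{n\in\mathbb{Z}}\|L^n z\|$ to turn $L|_{E^c}$ into a linear isometry, transfers the bounded shadowing property through this conjugacy (Lemma \ref{l2-B}), and finally invokes the isometry obstruction of Lemma \ref{l2-A}. You instead run the Lemma \ref{l2-A}--style chain argument directly in the ambient space $X$, using the full shadowing property of $L$, with the step size $\delta/M$ calibrated by the Banach--Steinhaus bound $M=\sup_{n}\|L^n|_{E^c}\|$ so that the chain $(L^i q_i)$ is still a $\delta$-pseudo orbit even though $L$ is not an isometry on $E^c$. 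The one genuinely new ingredient is your a posteriori verification that the shadowing point $z$ lies in $E^c$: the zero tail for $i<0$ gives $z\in E^{cu}$, and the exact-orbit tail for $i\geq r$ gives $z-x\in E^{cs}$, hence $z\in E^{cs}\cap E^{cu}=E^c$; only then can you apply $\|L^{-r}|_{E^c}\|\leq M$ to convert $\|L^r(z-x)\|\leq1$ into $\|z-x\|\leq M$ and conclude the uniform bound $\|x\|\leq 1+M$ on the subspace $E^c$, forcing $E^c=\{0\}$. What your approach buys is self-containment: it dispenses entirely with the notion of bounded shadowing, with Lemmas \ref{l2-C} and \ref{l2-B}, and with the renorming. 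What the paper's approach buys is the reusable machinery itself, in particular the corollary that equicontinuous operators on nontrivial Banach spaces fail even the bounded shadowing property, which does not fall out of your argument since you use shadowing on all of $X$ rather than any shadowing-type property of $L|_{E^c}$ alone.
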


At first glance we can try to prove this result by repeating the proof of Theorem \ref{thB-2} with the
generalized hyperbolic hypothesis replaced by shadowing.
Indeed, everything go well till the assertion that $L|_{E^c}$ has the shadowing property.
When $L$ is generalized hyperbolic this follows because the generalized hyperbolicity of $L$ implies that of $L|_{E^c}$ (and then the shadowing of $L|_{E^c}$ follows from Theorem \ref{thA-2}). In the shadowing case we cannot assert that the shadowing of $L$ implies that of $L|_{E^c}$. Instead, we will prove that the shadowing property of $L$ will induce a related property on $L|_{E^c}$. This is the idea behind the next definition.

\begin{deff}
A homeomorphism of a metric space $g:Y\to Y$ has the {\em bounded shadowing property} if for every $\epsilon>0$ there is $\delta>0$ such that for every {\em bounded} sequence $(x_n)_{n\in\mathbb{Z}}$ with
$d(g(x_n),x_{n+1})\leq\delta$ for $n\in\mathbb{Z}$ there is $x\in Y$ such that
$d(g^n(x),x_n)\leq\epsilon$ for $n\in\mathbb{Z}$.
\end{deff}

In other words $g$ has the bounded shadowing property if only the {\em bounded} pseudo orbits can be shadowed.
Clearly both the shadowing and the bounded shadowing properties are equivalent on bounded metric spaces and further every homeomorphism with the shadowing property has the bounded shadowing property too.
We don't know however if these two properties are equivalent on {\em every} metric space.

The main motivation for this definition comes from the following elementary lemma.
It asserts that the shadowing property of a linear homeomorphisms induces the bounded shadowing property on its central direction. More precisely, we have the following result.

\begin{lem}
\label{l2-C}
If $X$ is a Banach space and $L\in GL(X)$ has the shadowing property, then $L|_{E^c}$ has the bounded shadowing property.
\end{lem}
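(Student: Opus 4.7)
The plan is to use the shadowing property of $L$ itself on the given bounded pseudo orbit in $E^c$, and then observe that the shadowing point produced by $L$ must automatically lie in $E^c$. The crucial observation is that if a bounded sequence is $\epsilon$-close to an orbit $(L^n z)_{n\in\mathbb{Z}}$, then that orbit must also be bounded, forcing $z\in E^c$.

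In detail, I would proceed as follows. Given $\epsilon>0$, pick $\delta>0$ from the shadowing property of $L$ acting on $X$ for this $\epsilon$. Let $(x_n)_{n\in\mathbb{Z}}$ be a bounded sequence in $E^c$ with $\|L(x_n)-x_{n+1}\|\leq\delta$ for every $n\in\mathbb{Z}$, and set $M=\sup_{n\in\mathbb{Z}}\|x_n\|<\infty$. Since $E^c\subset X$ inherits its metric from $X$, the sequence $(x_n)_{n\in\mathbb{Z}}$ is a $\delta$-pseudo orbit of $L$ in $X$. The shadowing property of $L$ therefore yields some $z\in X$ with
$$
\|L^n(z)-x_n\|\leq\epsilon,\qquad\forall n\in\mathbb{Z}.
$$

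It now remains to verify that the shadowing point $z$ actually belongs to $E^c$. But this is immediate: for every $n\in\mathbb{Z}$,
$$
\|L^n(z)\|\leq \|L^n(z)-x_n\|+\|x_n\|\leq \epsilon+M,
$$
so $\sup_{n\in\mathbb{Z}}\|L^n(z)\|<\infty$, whence $z\in E^c$ by definition of $E^c$. Thus $z$ serves as the required shadowing point for the bounded pseudo orbit $(x_n)_{n\in\mathbb{Z}}$ inside $E^c$, and $L|_{E^c}$ has the bounded shadowing property.

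There is essentially no obstacle here; the proof is short precisely because the definition of $E^c$ is tailored to the conclusion. The only point that might deserve emphasis is why one needs the \emph{bounded} variant of the shadowing property rather than the full shadowing property on $E^c$: without the assumption that $(x_n)_{n\in\mathbb{Z}}$ is bounded one could not use the estimate $\|L^n(z)\|\leq \epsilon+M$ to conclude $z\in E^c$, so the full shadowing property is not obtained for $L|_{E^c}$ by this argument. This is exactly the reason the notion of bounded shadowing was introduced before the lemma, and it explains why the lemma is stated in this weaker form.
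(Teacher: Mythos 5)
Your proof is correct and follows exactly the same route as the paper: apply the shadowing property of $L$ on $X$ to the bounded pseudo orbit, then use the boundedness of the pseudo orbit together with the estimate $\|L^n(z)\|\leq\epsilon+\sup_n\|x_n\|$ to conclude that the shadowing point lies in $E^c$. Your closing remark about why only the bounded variant is obtained is also accurate and matches the discussion preceding the lemma in the text.
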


\begin{proof}
Let $\epsilon>0$ be given and $\delta$ be from the shadowing property of $L$.
Let $(x^c_n)_{n\in\mathbb{Z}}$ be a bounded sequence in $E^c$ such that
$\|L(x^c_n)-x^c_{n+1}\|\leq \delta$ for every $n\in\mathbb{Z}$.
Then, the shadowing property provides $y\in X$ such that
$$
\|L^n(y)-x^c_n\|\leq \epsilon,\quad\quad\forall n\in\mathbb{Z}.
$$
It follows that
$\|L^n(y)\|\leq \epsilon+\|x^c_n\|$ and so
$$
\sup_{n\in\mathbb{Z}}\|L^n(y)\|\leq \epsilon+\sup_{n\in\mathbb{Z}}\|x^c_n\|<\infty.
$$
Then, $y\in E^c$ and so $L|_{E^c}$ has the bounded shadowing property.
\qed
\end{proof}

To apply this lemma to prove Theorem \ref{thC-2} we need the
following lemma about the invariance of the bounded shadowing property under topological conjugacy.

\begin{lem}
\label{l2-B}
Let $Y$, $Z$ be a Banach spaces and  $H:Z\to Y$ be a linear homeomorphism.
If $P\in GL(Z)$ has the bounded shadowing property, then so does $H\circ P\circ H^{-1}\in GL(Y)$.
\end{lem}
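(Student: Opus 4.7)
The plan is to transport a bounded pseudo orbit of $Q=H\circ P\circ H^{-1}$ in $Y$ back to a bounded pseudo orbit of $P$ in $Z$ via $H^{-1}$, apply the bounded shadowing property of $P$ to obtain a shadowing point $z\in Z$, and then push it forward to $Y$ by setting $y=H(z)$. The key identity that makes this work is the conjugacy relation $Q\circ H=H\circ P$, which iterates to $Q^{n}\circ H=H\circ P^{n}$ for every $n\in\mathbb{Z}$, and the fact that both $H$ and $H^{-1}$ are bounded linear operators, hence preserve boundedness of sequences up to a multiplicative constant.

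Concretely, given $\epsilon>0$, I would first set $\epsilon'=\epsilon/\|H\|$ and let $\delta'>0$ be the constant provided by the bounded shadowing property of $P$ for $\epsilon'$; then I would define $\delta=\delta'/\|H^{-1}\|$. Now, let $(y_n)_{n\in\mathbb{Z}}$ be any bounded sequence in $Y$ satisfying $\|Q(y_n)-y_{n+1}\|\le\delta$ for all $n\in\mathbb{Z}$, and define $x_n=H^{-1}(y_n)$. Boundedness of $(x_n)$ is immediate from $\|x_n\|\le\|H^{-1}\|\sup_n\|y_n\|$, and the conjugacy identity together with linearity gives
$$
P(x_n)-x_{n+1}=H^{-1}\bigl(Q(y_n)\bigr)-H^{-1}(y_{n+1})=H^{-1}\bigl(Q(y_n)-y_{n+1}\bigr),
$$
so $\|P(x_n)-x_{n+1}\|\le\|H^{-1}\|\delta=\delta'$. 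Hence $(x_n)$ is a bounded $\delta'$-pseudo orbit of $P$.

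Applying the bounded shadowing property of $P$ yields $z\in Z$ with $\|P^n(z)-x_n\|\le\epsilon'$ for all $n\in\mathbb{Z}$. Setting $y=H(z)\in Y$ and using $Q^n(y)=H(P^n(z))$ one finds
$$
\|Q^n(y)-y_n\|=\|H(P^n(z))-H(x_n)\|\le\|H\|\,\|P^n(z)-x_n\|\le\|H\|\epsilon'=\epsilon,
$$
for all $n\in\mathbb{Z}$, which establishes the bounded shadowing property of $Q$. The argument is essentially bookkeeping; the only point requiring a small amount of care is to recognize that the boundedness hypothesis must also be transported under $H^{-1}$, which is automatic because a linear homeomorphism between Banach spaces sends bounded sequences to bounded sequences. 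No genuine obstacle arises beyond aligning the constants $\epsilon,\epsilon',\delta,\delta'$ through the operator norms $\|H\|$ and $\|H^{-1}\|$.
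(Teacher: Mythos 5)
Your proof is correct and follows essentially the same route as the paper: transport the bounded pseudo orbit through $H^{-1}$, shadow it with $P$, and push the shadowing point back through $H$. The only cosmetic difference is that you calibrate $\epsilon',\delta$ via the operator norms $\|H\|,\|H^{-1}\|$, whereas the paper picks the constants from uniform continuity (so its argument also covers bi-Lipschitz, not necessarily linear, conjugacies); for the lemma as stated your version is fully sufficient.
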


\begin{proof}
To prove the result we just need to assume that $H$ is a bi-Lipschitz homeomorphism
(in particular $H$ is uniformly continuous with uniformly continuous inverse $H^{-1}$).
Take $\epsilon>0$  and let $\epsilon'>0$ such that
$$
a,b\in Z\mbox{ and }\|a-b\|\leq \epsilon'\quad\quad\mbox{ implies }\quad\quad\|H(a)-H(b)\|\leq\epsilon.
$$
For this $\epsilon'$ we take $\delta'$ from the bounded shadowing property of $P$ and, for this $\delta'$, we take $\delta>0$ such that
$$
c,d\in Y\mbox{ and }\|c-d\|\leq \delta\quad\quad\mbox{ implies }\quad\quad\|H^{-1}c-H^{-1}d\|\leq \delta'.
$$
Now let $(y_n)_{n\in\mathbb{Z}}$ be a bounded sequence in $Y$
such that
$$
\|H\circ P\circ H^{-1}(y_n)-y_{n+1}\|\leq \delta,\quad\quad\forall n\in\mathbb{Z}.
$$
Then,
$\|P(H^{-1}(y_n))-H^{-1}(y_{n+1})\|\leq \delta'$ for $n\in\mathbb{Z}$. Since
$$
\|H^{-1}(y_n)\|\leq \|H^{-1}(y_n)-H^{-1}(0)\|
\leq Lip(H^{-1})\|y_n\|+\|H^{-1}(0)\|,\quad\quad\forall n\in\mathbb{Z}
$$
(where $Lip(H^{-1})$ is the Lipschitz constant of $H^{-1}$),
we also have
$$
\sup_{n\in\mathbb{Z}}\|H^{-1}(y_n)\|\leq Lip(H^{-1})\sup_{n\in\mathbb{Z}}\|y_n\|+\|H^{-1}(0)\|<\infty,
$$
hence $(H^{-1}(y_n))_{n\in\mathbb{Z}}$ is a bounded sequence of $Z$.
Then, the bounded shadowing property provides $z\in Z$ such that
$$
\|L^n(z)-H^{-1}(y_n)\|\leq \epsilon',\quad\quad\forall n\in\mathbb{Z}.
$$
It follows that
$\|H(L^n(z))-y_n\|\leq \epsilon$ and so $y=H(z)$ satisfies
$$
\|(H\circ L\circ H^{-1})(y)-y_n\|=\|H(L^n(z))-y_n\|\leq \epsilon,\quad\quad\forall n\in\mathbb{Z}
$$
proving the result.
\qed
\end{proof}

\begin{proof}[of Theorem \ref{thC-2}]
Let $X$ a Banach space and $L\in GL(X)$ with the shadowing property
such that $E^c$ is closed.
Then, $E^c$ is a closed subspace with $L(E^c)=E^c$.
In particular, $E^c$ is a Banach space with the induced norm and $L|_{E^c}\in GL(E^c)$.
Define the Banach space $Z=(E^c,\|\cdot\|)$ where $\|\cdot\|$ is the induced norm from $X$.
Define $P\in GL(Z)$ by $P=L|_{E^c}$. Since $L$ has the shadowing property, $P$ has the bounded shadowing property by Lemma \ref{l2-C}.

We now construct a second Banach space $Y$ as follows.
By the Banach-Steinhouse Theorem as in Theorem \ref{thB-2} we have
that
\begin{equation}
\label{battle}
\sup_{n\in\mathbb{Z}}\|(L|_{E^c})^n\|<\infty.
\end{equation}
Define the new norm $\|\cdot\|'$ in $E^c$ by
$$
\|z^c\|'=\sup_{n\in\mathbb{Z}}\|L^n(z^c)\|,\quad\quad\forall z^c\in E^c.
$$
Clearly $\|z^c\|\leq \|z^c\|'$ for $z^c\in E^c$ and also because of (\ref{battle}) there is $0<M<\infty$ such that
$\|z^c\|'\leq M\|z^c\|$ for all $z^c\in E^c$ namely
\begin{equation}
\label{japa}
\|z^c\|\leq \|z^c\|'\leq M \|z^c\|,\quad\quad\forall z^c\in E^c.
\end{equation}
Put $Y=(E^c,\|\cdot\|')$.  It follows from (\ref{japa}) that $Y$ is a Banach space.

Next we define $H:Z\to Y$ by as the identity of $E^c$.
It follows from (\ref{japa}) that $H$ is a linear homeomorphism.
Since $P$ has the bounded shadowing property, so does $H\circ P\circ H^{-1}$ by Lemma \ref{l2-B}.
Moreover,
$$
\|H\circ P\circ H^{-1}(z^c)\|'=\|L(z^c)\|'=\sup_{n\in\mathbb{Z}}\|L^n(L(z^c))\|=\sup_{n\in\mathbb{Z}}\|L^n(z^c)\|=\|z^c\|',
$$
for every $z^c\in Y=E^c$ so
$H\circ P\circ H^{-1}$ is a linear isometry of $Y$.
Therefore, $E^c=Y=\{0\}$ by Lemma \ref{l2-A} completing the proof.
\qed
\end{proof}

It was proved in [Axiom] that an equicontinuous linear homeomorphism of a Banach space has the shadowing property only if $X=\{0\}$. As a direct corollary of the above proof
we obtain the similar result for the bounded shadowing property. More precisely, we have the following result.

\begin{cor}
An equicontinuous linear homeomorphism of a Banach space $X$ has the bounded shadowing property if and only if $X=\{0\}$.
\end{cor}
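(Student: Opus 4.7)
The plan is to run the same machinery used in the proof of Theorem \ref{thC-2}, but starting directly from the bounded shadowing hypothesis instead of having to cut down to $E^c$ first. The converse (if $X=\{0\}$ then anything on $X$ has every property trivially) needs no comment, so I focus on the forward direction.

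First I would note that equicontinuity of $L\in GL(X)$ is equivalent to $\sup_{n\in\mathbb{Z}}\|L^n\|<\infty$; this follows either from the Banach--Steinhaus theorem applied to the family $\{L^n\}_{n\in\mathbb{Z}}$ (the pointwise boundedness given by equicontinuity at $0$ is enough), or can be read off directly from the definition. In particular every orbit is bounded, so $E^c=X$, and $E^c$ is trivially closed.

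Second, I would introduce the equivalent norm
\[
\|x\|' \;=\; \sup_{n\in\mathbb{Z}}\|L^n(x)\|,\qquad x\in X,
\]
which satisfies $\|x\|\le \|x\|' \le M\|x\|$ where $M=\sup_{n\in\mathbb{Z}}\|L^n\|$. Hence $Y:=(X,\|\cdot\|')$ is a Banach space and the identity map $H:(X,\|\cdot\|)\to Y$ is a bi-Lipschitz linear homeomorphism. A one-line computation, exactly as in the proof of Theorem \ref{thC-2},
\[
\|L(x)\|' \;=\; \sup_{n\in\mathbb{Z}}\|L^{n+1}(x)\| \;=\; \sup_{n\in\mathbb{Z}}\|L^n(x)\| \;=\; \|x\|',
\]
shows that $H\circ L\circ H^{-1}\in GL(Y)$ is a linear isometry of $Y$.

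Third, since $L$ has the bounded shadowing property on $X$, Lemma \ref{l2-B} transports this property through the conjugacy $H$, so $H\circ L\circ H^{-1}$ has the bounded shadowing property on $Y$. At this point I would invoke Lemma \ref{l2-A} applied in $Y$. The delicate point, and what I expect to be the main obstacle, is that Lemma \ref{l2-A} as stated requires the \emph{full} shadowing property, whereas I only have bounded shadowing. However, its proof constructs the pseudo-orbit $(p_i)_{i\in\mathbb{Z}}$ out of the pieces $L^i(q_0),L^i(q_i),L^i(q_r)$, all of which have norm $\|q_j\|$ since the operator is an isometry; this bounds the $p_i$ uniformly by $\max_{0\le j\le r}\|q_j\|$. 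Thus the pseudo-orbit used in Lemma \ref{l2-A} is in fact bounded, and the same argument goes through with only the bounded shadowing property. This forces $Y=\{0\}$, and since $H$ is a bijection from $X$ to $Y$, we conclude $X=\{0\}$, finishing the proof.
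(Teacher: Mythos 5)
Your proof is correct and follows essentially the same route as the paper, which derives this corollary directly from the argument for Theorem \ref{thC-2}: equicontinuity gives $E^c=X$, the renorming $\|x\|'=\sup_{n\in\mathbb{Z}}\|L^n(x)\|$ turns $L$ into an isometry, Lemma \ref{l2-B} transports the bounded shadowing property, and Lemma \ref{l2-A} finishes. Your explicit observation that the pseudo-orbit built in the proof of Lemma \ref{l2-A} is bounded (so that bounded shadowing suffices there) is a point the paper uses tacitly, and you are right to spell it out.
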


Since every subspace of a finite dimensional Banach space is closed we also obtain the following well-known corollary from Theorem \ref{thC-2}.

\begin{cor}
Every linear homeomorphism with the shadowing property of a finite dimensional Banach space is 
expansive (hence hyperbolic).
\end{cor}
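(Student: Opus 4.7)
The plan is to deduce this corollary directly from Theorem \ref{thC-2} together with the finite-dimensional result from Chapter 1 asserting that expansive invertible linear operators are automatically hyperbolic.

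First I would recall that the central direction
$$
E^c = \{x \in X : \sup_{n \in \mathbb{Z}} \|L^n(x)\| < \infty\}
$$
is always a linear subspace, as noted just before Theorem \ref{thB-2}. In finite dimension this is the one free step: since every linear subspace of a finite-dimensional Banach space is automatically closed, the hypothesis ``$E^c$ closed'' in Theorem \ref{thC-2} is satisfied for free. No norm-estimation or completeness argument is needed to verify it.

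Next I would simply invoke Theorem \ref{thC-2}: since $L \in GL(X)$ has the shadowing property and $E^c$ is closed, the theorem yields $E^c = \{0\}$. By the characterization of expansivity in terms of the central subspace (Lemma \ref{ele} in Chapter 1, together with the comment in Chapter 2 that $L$ is expansive iff $E^{cs} \cap E^{cu} = E^c = \{0\}$), this is exactly the statement that $L$ is expansive. Finally, expansivity plus finite dimension gives hyperbolicity by the theorem in Chapter 1 stating ``If $X$ is a finite-dimensional Banach space and $L \in GL(X)$ is expansive, then $L$ is hyperbolic.''

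There is essentially no obstacle here: the corollary is a clean packaging of Theorem \ref{thC-2} with the trivial remark that finite-dimensional subspaces are closed, plus the Chapter 1 equivalence between expansivity and hyperbolicity in finite dimension. The only point to be slightly careful about is to state that $E^c$ is a subspace (which is immediate from the definition and linearity of $L$) so that the closure hypothesis of Theorem \ref{thC-2} applies without any additional work.
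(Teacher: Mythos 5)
Your proposal is correct and matches the paper's own argument exactly: the paper derives this corollary from Theorem \ref{thC-2} with precisely the remark that every subspace of a finite-dimensional Banach space is closed, so $E^c$ is closed for free, giving $E^c=\{0\}$ and hence expansivity. The final step to hyperbolicity via the Chapter 1 equivalence is also the intended reading of the parenthetical.
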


\chapter{Structural stability}

\noindent
Let $X$ be a Banach space.

\begin{deff}
We say that $L\in GL(X)$ is {\em strongly structural stable} if for every $\epsilon>0$ there is $\delta>0$ such that for every Lipschitz and bounded map $\beta:X\to X$ with
$\max\{\|\beta\|_\infty, Lip(\beta)\}<\delta$ there is a homeomorphism $H:X\to X$ such that
$$
\|H-I\|_\infty\leq \epsilon\quad\quad\mbox{ and }\quad\quad  H\circ L=(L+\beta)\circ H.
$$
\end{deff}

The main result of this chapter is the following one proved in \cite{bm2}.
\begin{theo}
\label{thA-3}
If $L\in GL(X)$ is generalized hyperbolic, then $L$ is strongly structural stable.
\end{theo}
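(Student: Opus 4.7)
The plan is to obtain the conjugacy in the form $H = I + h^*$ with $h^* \in Y := C_b(X,X)$ by translating $H \circ L = (L+\beta)\circ H$ into the functional equation
\[
h^*(Lx) - L(h^*(x)) = \beta(x + h^*(x)), \qquad \forall x \in X,
\]
and solving it by a contraction built on the ``shadowableness operator'' $\Phi$ already used in the proof of Theorem \ref{thA-2}. Recall that $\Phi:\ell^\infty(\mathbb{Z},X) \to \ell^\infty(\mathbb{Z},X)$, defined by
\[
\Phi(b)_n = \sum_{k=0}^\infty L^k P_S(b_{n-k-1}) - \sum_{k=1}^\infty L^{-k} P_U(b_{n+k-1}),
\]
is bounded linear with $\|\Phi\|\le K := \|P_S\|\sum_{k=0}^\infty\|L^k|_S\| + \|P_U\|\sum_{k=1}^\infty\|L^{-k}|_U\| < \infty$, satisfies the recursion $\Phi(b)_{n+1} = L\,\Phi(b)_n + b_n$, and is shift-equivariant, $\Phi(\sigma b)_n = \Phi(b)_{n+1}$, where $(\sigma b)_n = b_{n+1}$.

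For existence I define $T:Y \to Y$ by $T(h)(x) = \Phi(b_{h,x})_0$, with $b_{h,x}(n) = \beta(L^n x + h(L^n x))$; uniform convergence of the defining series makes $T(h)$ continuous. The estimate
\[
\|T(h_1) - T(h_2)\|_\infty \le K\cdot Lip(\beta)\cdot \|h_1 - h_2\|_\infty
\]
shows that if $\delta < 1/K$ then $T$ is a contraction on $Y$ and admits a unique fixed point $h^*$. Shift-equivariance of $\Phi$ combined with the fixed-point identity $T(h^*)=h^*$ then forces $h^*$ to satisfy the functional equation above, so $H := I + h^*$ intertwines $L$ with $\tilde L := L + \beta$. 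The bound $\|h^*\|_\infty \le K\delta/(1 - K\delta)$ ensures $\|H - I\|_\infty \le \epsilon$ once $\delta$ is taken small enough.

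To invert $H$, I note that for $\delta < 1/\|L^{-1}\|$ the map $\tilde L = L(I + L^{-1}\beta)$ is a homeomorphism of $X$. The dual conjugacy $H' \circ \tilde L = L \circ H'$ with $H' = I + h'$ reduces to the \emph{linear} equation $h'(\tilde L y) - L h'(y) = -\beta(y)$, which I solve directly, without iteration, by $h'(y) = \Phi(c_y)_0$ with $c_y(n) = -\beta(\tilde L^n y)$; shift-equivariance of $\Phi$ verifies this. Introducing the common kernel $\eta(z) := \Phi(\beta \circ \tilde L^{\bullet} z)_0$, the conjugacy identity $\beta(L^n x + h^*(L^n x)) = \beta(\tilde L^n H(x))$ gives the clean formulas $h^* = \eta \circ H$ and $h' = -\eta$, whence
\[
h'(H(x)) = -\eta(H(x)) = -h^*(x)
\]
and therefore $H' \circ H = I$, which in particular gives injectivity of $H$.

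The remaining and most delicate task is the reverse identity $H \circ H' = I$. Writing $g(y) := H(H'(y)) - y = h'(y) + h^*(H'(y))$ and using $h^* = \eta \circ H$, a short manipulation reduces the problem to showing that the nonlinear equation $g(y) = \eta(y + g(y)) - \eta(y)$ has only the trivial solution $g \equiv 0$ in $Y$. My plan is to encode this as a fixed-point problem for an auxiliary contraction on $Y$ built again from $\Phi$ and the Lipschitz data of $\beta$, whose unique fixed point is $0$, and to verify through a shift-equivariance computation analogous to the one above that $g$ is forced to be that fixed point. Once this is established, $H' = H^{-1}$ and $H$ is a homeomorphism, completing the proof of strong structural stability. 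The principal obstacle will be exactly this last uniqueness: because generalized hyperbolicity does not in general imply expansivity, the ``bounded-orbit'' indeterminacy coming from $E^c$ must be excluded by careful exploitation of the explicit formulas for $h^*$ and $h'$, rather than by a direct appeal to expansivity.
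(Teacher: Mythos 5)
Your construction of $H=I+h^*$ and the identity $H'\circ H=I$ are correct and essentially match the paper's Lemmas \ref{l2-4} and \ref{l3-4}: your operator $T$ is the paper's contraction $\Pi_1(\phi)=\Gamma(\beta(I+\phi))$, where $\Gamma$ is the right inverse of $\phi\mapsto\phi L-L\phi$, and your direct verification of $H'\circ H=I$ via $h^*=\eta\circ H$, $h'=-\eta$ is a clean shortcut past the paper's uniqueness argument for that direction. The genuine gap is in the last step, $H\circ H'=I$, which you leave as a plan, and the plan as stated would fail. The equation you derive, $g(y)=\eta(y+g(y))-\eta(y)$, cannot be closed by a contraction in $g$ built from the Lipschitz data of $\beta$: the kernel $\eta=\Gamma_M(\beta)$, i.e. $\eta(z)=\sum_{k\geq0}L^kP_S\beta(\tilde L^{-k-1}z)-\sum_{k\geq1}L^{-k}P_U\beta(\tilde L^{k-1}z)$, has Lipschitz constant controlled only by $\sup_{k}Lip(\beta\circ\tilde L^{\pm k})$, which is unbounded since the iterates of $\tilde L$ are not equicontinuous. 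Hence $\|\eta(y+g_1(y))-\eta(y+g_2(y))\|$ is not dominated by a small multiple of $\|g_1-g_2\|_\infty$, and no smallness of $\delta$ rescues this.

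The missing idea is the paper's Lemma \ref{l1-4}: the intertwining operator $\Phi_M(\phi)=\phi M-L\phi$, which is \emph{not} injective on all of $C_b(X)$ when $L$ fails to be expansive (exactly the $E^c$-indeterminacy you flag), \emph{is} injective on the closed subspace $C_b(X,Y)$ of maps valued in $Y=S+L^{-1}(U)$, with inverse $\Gamma_M$; the proof is a nontrivial induction showing that the partial sums of the two telescoping series stay in $S$ and in $L^{-1}(U)$ respectively. Granting this, one finishes as the paper does: $v=HH'-I=h'+h^*(I+h')$ takes values in $Y$ (both $h^*$ and $h'$ are images of $\Gamma$-type operators, whose $P_S$-series lands in $S$ and whose $P_U$-series lands in $L^{-1}(U)$); the conjugacy $HH'M=MHH'$ gives $\Phi_M(v)=\beta(I+v)-\beta$; injectivity on $C_b(X,Y)$ converts this to $v=\Gamma_M(\beta(I+v)-\beta)$; and $\phi\mapsto\Gamma_M(\beta(I+\phi)-\beta)$ is an honest contraction — because the nonlinearity $\beta$ is applied \emph{before} $\Gamma_M$, so only $\|\Gamma_M\|\,Lip(\beta)$ enters the estimate — which fixes $0$, forcing $v=0$. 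Without Lemma \ref{l1-4} or an equivalent uniqueness statement, your argument does not establish surjectivity of $H$ and is incomplete.
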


\begin{proof}
Let $P_S:X\to S$ and $P_U:X\to U$ be the projections corresponding to the splitting $X=S\oplus U$.
Put
$d=\max\{\|P_S\|,\|P_U\|\}$.
Since $r(L|_S)<1$ and $r(L^{-1}|_U)<1$, the spectral radius theorem implies that there there are $c>0$ and $0<t<1$ such that
$$
\|L^k|_S\|\leq ct^k\quad\quad\mbox{ and }\quad\quad \|L^{-k}|_U\|\leq ct^k\quad\quad\forall k\geq0.
$$
Then, the series
\begin{equation}
\label{convergent}
A=\displaystyle\sum_{k=0}^\infty \|L^k|_S\|\quad\quad\mbox{ and }\quad\quad\displaystyle B=\sum_{k=0}^\infty\|L^{-1}|_U\|
\end{equation}
are convergent. Clearly $A+B>0$.

Given $\epsilon>0$ we choose $\delta=\frac{\epsilon}{d(A+B)}$.
Let $\beta:X\to X$ be bounded, Lipschitz with $\max\{\|\beta\|_\infty,Lip(\beta)\}<\delta$.
We can assume that $L+\beta:X\to X$ is a uniform homeomorphism.
Put $M=L+\beta$.

Define $C_b(X)$ as the set of bounded uniformly continuous maps $\phi:X\to X$.
We have that $C_b(X)$ is a Banach space if endowed with usual function operations and the supremum norm $\|\phi\|_\infty$.
To simplify the notation we will write $PQ$ instead of $P\circ Q$ when considering the composition of maps.
Given a uniform homeomorphism $R:X\to X$ and $\phi\in C_b(X)$ we define
\begin{equation}
\label{perrus}
\Phi(\phi)=\phi R-L\phi.
\end{equation}
Since
$$
\|\Phi(\phi)\|_\infty\leq \|\phi R\|_\infty+\|L\phi\|\leq (1+\|L\|)\|\phi\|_\infty,
$$
we have that $\Phi(\phi)$ is bounded for every $\phi\in C_b(X)$.
Also we can see that $\Phi(\phi)$ is uniformly continuous so $\Phi(\phi)\in C_b(X)$.
Hence we have a map $\Phi:C_b(X)\to C_b(X)$ which is clearly linear
and bounded with norm $\|\Phi\|_\infty\leq 1+\|L\|$.

Now given $\alpha\in C_b(X)$
we have
$$
\left\|
\displaystyle\sum_{k=0}^\infty L^kP_S\alpha R^{-k-1}
\right\|\leq \displaystyle\sum_{k=0}^\infty\|L^kP_S\alpha R^{-k-1}\|\leq
\left(\displaystyle\sum_{k=0}^\infty
\|(L|_S)^k\|\right)\|\alpha\|_\infty<\infty
$$
hence
the operator series below
$$
\displaystyle\sum_{k=0}^\infty L^kP_S\alpha R^{-k-1}
$$
is convergent.
Likewise the series
$$
\displaystyle\sum_{k=1}^\infty L^{-k}P_U\alpha R^{k-1}
$$
is convergent hence the map
$\Gamma(\alpha):X\to X$ defined by
$$
\Gamma(\alpha)=\displaystyle\sum_{k=0}^\infty L^kP_S\alpha R^{-k-1}-\displaystyle\sum_{k=1}^\infty L^{-k}P_U\alpha R^{k-1}
$$
is well defined.
Since
$$
\|\Gamma(\alpha)\|_\infty
\leq 
\left\|
\displaystyle\sum_{k=0}^\infty L^kP_S\alpha R^{-k-1}\right\|
+
\left\|
\displaystyle\sum_{k=1}^\infty L^{-k}P_U\alpha R^{k-1}
\right\|
\leq d(A+B) \|\alpha\|_\infty<\infty,
$$
$\Gamma(\alpha):X\to X$ is bounded.
We claim that $\Gamma(\alpha)$ is also uniformly continuous.
Without loss of generality we can assume $\|\alpha\|_\infty>0$.
Fix $\epsilon>0$ and $k_0\in\mathbb{N}$ such that
$$
\displaystyle\sum_{k=k_0}^\infty\|L^k|_{S}\|<\frac{\epsilon}{8d\|\alpha\|_\infty}\quad\mbox{ and }\quad
\displaystyle\sum_{k=k_0}^\infty\|L^{-k}|_U\|<\frac{\epsilon}{8d\|\alpha\|_\infty}.
$$
Since $\alpha$ is uniformly continuous, there is $\delta>0$ such that
$$
\|a-b\|<\delta\quad\quad\Longrightarrow\quad\quad \|\alpha(a)-\alpha(b)\|\leq \frac{\epsilon}{4d(A+B)}.
$$
Since $R$ is a uniform homeomorphism, there is $\delta'>0$ such that
$$
\|x-y\|<\delta'\quad\quad\Longrightarrow\quad\quad \|R^{k-1}(x)-R^{k-1}(y)\|<\delta,\quad\quad\forall -k_0\leq k\leq k_0.
$$
Then, if $x,y\in X$ and
$\|x-y\|<\delta$,
\begin{eqnarray*}
\|\Gamma(\alpha)x-\Gamma(\alpha)y\| & \leq & \left\|
\displaystyle\sum_{k=0}^\infty L^k P_S(\alpha R^{-k-1}(x)-\alpha R^{-k-1}(y))\right\| \\
& & +\left\|
\displaystyle\sum_{k=1}^\infty L^{-k} P_U(\alpha R^{k-1}(x)-\alpha R^{k-1}(y))\right\| \\
& \leq & d\displaystyle\sum_{k=0}^\infty\|L^k|_S\|\|\alpha R^{-k-1}(x)-\alpha R^{-k-1}(y)\| \\
& & + d\displaystyle\sum_{k=1}^\infty\|L^{-k}|_S\|\|\alpha R^{k-1}(x)-\alpha R^{k-1}(y)\| \\
& \leq & d\left(\displaystyle\sum_{k=k_0}^\infty\|L^k|_S\|\right)2\|\alpha\|_\infty + d\left(\displaystyle\sum_{k=k_0}^\infty\|L^{-k}|_U\|\right)2\|\alpha\|_\infty \\
& & + d\left( \displaystyle\sum_{k=0}^{k_0-1}\|L^k|_S\| \right) \frac{\epsilon}{4d(A+B)}\\
& & 
+d\left( \displaystyle\sum_{k=1}^{k_0-1}\|L^{-k}|_U\| \right) \frac{\epsilon}{4d(A+B)} \\
& \leq & \frac{\epsilon}4+\frac{\epsilon}4+\frac{\epsilon}4+\frac{\epsilon}4\\
& = & \epsilon
\end{eqnarray*}
proving that $\Gamma(\alpha)$ is uniformly continuous.
It follows that $\Gamma(\alpha)\in C_b(X)$ for $\alpha\in C_b(X)$ yielding a bounded linear map
$\Gamma:C_b(X)\to C_b(X)$ with norm
$$
\|\Gamma\|_\infty\leq d(A+B).
$$
Since
\begin{eqnarray*}
\Phi(\Gamma(\alpha)) & = & \Gamma(\alpha)R-L\Gamma(\alpha)\\
& = & \displaystyle\sum_{k=0}^\infty L^kP_S\alpha R^{-k}-\displaystyle\sum_{k=1}^\infty L^{-k}P_U\alpha R^k-\\
& & \left(\displaystyle\sum_{k=0}^\infty L^{k+1}P_S\alpha R^{-(k+1)}-\displaystyle\sum_{k=1}^\infty L^{(-k-1)}P_U\alpha R^{k-1}\right)\\
& = & \left(
\displaystyle\sum_{k=1}^\infty L^kP_S\alpha R^{-k}-\displaystyle\sum_{k=0}^\infty L^{k+1}P_S\alpha R^{-(k+1)}\right)\\
& & 
-\left(
\displaystyle\sum_{k=1}^\infty L^{-k}P_U\alpha R^k+P_U\alpha-\displaystyle\sum_{k=1}^\infty L^{-(k-1)}P_U\alpha R^{k-1}\right)\\
& & + P_S\alpha+P_U\alpha\\
& = & \alpha,
\end{eqnarray*}
$\Gamma$ is a right inverse of $\Phi$.

We shall prove that $\Gamma$ is the inverse of $\Phi$ restricted to a certain subspace of $C_b(X)$:
Clearly $S\cap L^{-1}(U)\subset S\cap U=\{0\}$ so the sum
$$
Y=S+L^{-1}(U)
$$
is direct.
Since the sum $S+U$ is direct and closed (for it is $X$), Theorem 1.1 in \cite{l} supplies $A>0$ such that  $\|x\|\leq A\|x-y\|$
for all $(x,y)\in S\times U$ and so for all $(x,y)\in S\times L^{-1}(U)$ too.
On the other hand, $S\cap L^{-1}(U)\subset S\cap U=\{0\}$ so the sum
$S+L^{-1}(U)$ is direct. Therefore, $Y$ is closed by Theorem 1.1 in \cite{l}.

It follows that $Y$ is a Banach space with the induced norm and so
$C_b(X,Y)=\{\phi\in C_b(X):\phi(X)\subset Y\}$ is a closed subspace of $C_b(X)$.
Now we need the following lemmas.

\begin{lem}
\label{l1-4}
$\Gamma=(\Phi|_{C_b(X,Y)})^{-1}$.
\end{lem}

\begin{proof}
Since $\Gamma$ is a right inverse of $\Phi$, it
suffices to show that
if $\Phi(\phi)=\alpha$ for $\phi\in C_b(X,Y)$ and $\alpha\in C_b(X)$, then
$\phi=\Gamma(\alpha)$.

Suppose that $\Phi(\phi)=\alpha$ i.e. $\phi R-L\phi=\alpha$. Then, $\phi R=L\phi +\alpha$ and so
\begin{equation}
\label{herma}
\phi R^n=L^n\phi+\displaystyle\sum_{k=1}^nL^{n-k}\alpha R^{k-1},
\quad\quad\forall n\in\mathbb{N}.
\end{equation}
Composing with $L^{-n}$ to the left above identity we get
$$
\phi=L^{-n} \phi R^n-\displaystyle\sum_{k=1}^nL^{-k}\alpha R^{k-1},
\quad\quad\forall n\in \mathbb{N}.
$$
We split this identity as
$\phi=y_n+z_n$ with
$$
y_n=L^{-n} P_S\phi R^n-\displaystyle\sum_{k=1}^nL^{-k}P_S\alpha R^{k-1}
\mbox{ and }
z_n=L^{-n} P_U\phi R^n-\displaystyle\sum_{k=1}^nL^{-k}P_U\alpha R^{k-1}
$$
We clearly have that $z_n\in L^{-1}(U)$ (i.e. $z_n(x)\in L^{-1}(U)$ for all $x\in X$)
for all $n\in \mathbb{N}$.

We claim that $y_n\in S$ for all $n\in\mathbb{N}$.
For $n=1$, $Ly_1=P_S\phi R-P_S\alpha\in S$.
Then, if $y_1=a_1+b_1\in S+U$, $Lb_1=Ly_1-La_1\in S$ so $b_1\in L^{-1}(S)$.
But also
$\phi=a_1+b_1+z_1$ and $z_1\in L^{-1}(U)$ so
$b_1=\phi-a_1-z_1\in S+L^{-1}(U)$ thus $b_1\in L^{-1}(U)$.
Summarizing, $b_1\in L^{-1}(S)\cap L^{-1}(U)=\{0\}$ so $b_1=0$ thus $y_1=a_1\in S$.
Now suppose that $y_n\in S$ for some $n\in\mathbb{N}$.
Since
\begin{eqnarray*}
Ly_{n+1} & = & L^{-n}P_S\phi R^{n+1}-\displaystyle\sum_{k=1}^{n+1}L^{-(k-1)}P_S\alpha R^{k-1}\\
& = & \left(L^{-n}P_S\phi R^n-\displaystyle\sum_{k=2}^{n+1} L^{-(k-1)}P_S\alpha R^{k-2}\right)R+P_S\alpha\\
& = &  \left(L^{-n}P_S\phi R^n-\displaystyle\sum_{k=1}^{n} L^{-k}P_S\alpha R^{k-1}\right)R + P_S\alpha\\
& = & y_nR+P_S\alpha,
\end{eqnarray*}
one has $Ly_{n+1}\in S$.
Then, if $y_{n+1}=a_{n+1}+b_{n+1}\in S+U$, $Lb_{n+1}=Ly_{n+1}-La_{n+1}\in S$ so $b_{n+1}\in L^{-1}(S)$.
But also
$\phi=a_{n+1}+b_{n+1}+z_{n+1}$ and $z_{n+1}\in L^{-1}(U)$ so
$b_{n+1}=\phi-a_{n+1}-z_{n+1}\in S+L^{-1}(U)$ thus $b_{n+1}\in L^{-1}(U)$.
Summarizing, $b_{n+1}\in L^{-1}(S)\cap L^{-1}(U)=\{0\}$ so $b_{n+1}=0$ thus $y_{n+1}=a_{n+1}\in S$.
Then, the claim is proved by induction.

It follows that
$$
P_U\phi=L^{-n}P_U\phi R^n-\displaystyle\sum_{k=1}^nL^{-k}P_U\alpha R^{k-1}.
$$
We have $\|L^{-n}P_U\phi R^n(x)\|\leq \|L^{-n}|_U\|\|\phi\|_\infty\to0$ as $n\to\infty$ hence
\begin{equation}
\label{peu1}
P_U\phi=-\displaystyle\sum_{k=0}^\infty L^{-k}P_U\alpha R^{k-1}.
\end{equation}
Likewise,
\begin{equation}
\label{peu2}
P_S\phi=\displaystyle\sum_{k=1}^\infty L^kP_S\alpha R^{-k-1}.
\end{equation}
Indeed, composing \eqref{herma} with $R^n$ to the right we get
$$
\phi=L^n\phi R^{-n}+\displaystyle\sum_{k=0}^{n-1}L^k\alpha R^{-k-1}
$$
As before we write
$\phi=y_n'+z_n'$ where
$$
y_n'=L^nP_S\phi R^{-n}+\displaystyle\sum_{k=0}^{n-1}L^kP_S\alpha R^{-k-1}
\mbox{ and }
z_n'=L^nP_U\phi R^{-n} +\displaystyle\sum_{k=0}^{n-1}L^k P_U \alpha R^{-k-1}
$$
This time we easily have $y_n'\in S$. 

We claim that $z_n'\in L^{-1}(U)$.
Indeed,
for $n=1$ we see that
$z'_1=LP_U\phi R^{-1}+P_U\phi R^{-1}$.
As $\phi\in Y=S+L^{-1}U$, $LP_U\phi R^{-1}\in U$.
As $P_U\alpha R^{-1}\in U$, $z_1'\in U$.
By $\phi=y_1'+z_1'$, $y'_1\in S$ and $\phi\in S+L^{-1}U$, we get $z'_1\in S+L^{-1}U$.
So, $z'_1\in L^{-1}U$ proving the result for $n=1$.
Now suppose the claim is true for $n$.
For $n+1$ we have
$$
z_{n+1}'=L^{n+1}P_U\phi R^{-(n+1)}+\displaystyle\sum_{k=0}L^kP_U\alpha R^{-k-1}=L z'_n R^{-1}+P_U\alpha R^{-1}\in U.
$$
As $\phi=y_{n+1}'+z_{n+1}'$, $y_{n+1}'\in S$ and $\phi\in S+L^{-1}U$, $z_{n+1}'\in S+L^{-1}U$.
So, $z'_{n+1}\in L^{-1}U$. This proves the claim by induction.

It follows from the claim that
$$
P_S\phi=L^nP_S\phi R^{-n}+\displaystyle\sum_{k=0}^{n-1}L^kP_S\alpha R^{-k-1},
\quad\quad\forall n\geq1.
$$
Since $\|L|_S\|\to0$ as $n\to\infty$, we obtain \eqref{peu2}.
Combining \eqref{peu1} and \eqref{peu2} we get
$$
\phi=P_S\phi+P_U\phi=\displaystyle\sum_{k=0}^\infty L^{-k}P_U\alpha R^{k-1}-\displaystyle\sum_{k=1}^\infty L^kP_S\alpha R^{-k-1}=\Gamma(\alpha).
$$
This completes the proof.
\qed
\end{proof}

We also need the following lemma.

\begin{lem}
\label{l2-4}
There is a unique $h\in C_b(X)$ such that the uniformly continuous map $H=I+h:X\to X$
satisfies
$$
HL=MH\quad\quad\mbox{ and }\quad\quad \|H-T\|_\infty\leq \epsilon.
$$
\end{lem}

\begin{proof}
Given $h\in C_b(X)$ we have that
$$
HL=MH\Longleftrightarrow (I+h)L=(L+\beta)(I+h)\Longleftrightarrow hL=Lh+\beta(I+h)
$$
$$
\Longleftrightarrow hL-Lh=\beta(I+h)\Longleftrightarrow \Phi_L(h)=\beta(I+h)\Longleftrightarrow h=\Phi_L^{-1}(\beta(I+h))
$$
where $\Phi_L$ is the map defined in \eqref{perrus} for $R=L$.
Then, $HL=MH$ if and only if $h$ is a fixed point of the
map $\Pi_1:C_b(X)\to C_b(X)$ defined by
$$
\Pi_1(\phi)=\Phi^{-1}_L(\beta(I+\phi)).
$$
By Lemma \ref{l1-4} one has
$$
Lip(\Pi_1)\leq \|\Gamma\| Lip(\beta)\leq d(A+B)\delta=\epsilon<1.
$$
Then, $\Pi_1$ has a unique fixed point and so there is a unique $h\in C_b(X)$
such that $H=I+h$ satisfies $HL=MH$.
Since
$$
\|h\|_\infty=\|\Phi_L^{-1}(\beta(I+h))\|_\infty\leq d(A+B)\|\beta\|_\infty
\leq d(A+B)\delta=\epsilon
$$
we are done.
\qed
\end{proof}

\begin{lem}
\label{l3-4}
There is a unique $h'\in C_b(X)$ such that
the uniformly continuous map $H'=I+h':X\to X$ satisfies
$$
H'M=LH'\quad\quad\mbox{ and }\quad\quad \|H'-I\|_\infty=\|h'\|_\infty\leq\epsilon.
$$
\end{lem}

\begin{proof}
We apply Lemma \ref{l1-4} to $R=M$ to get
the map $\Phi_M$.
We have
$$
H'M=LH'\Longleftrightarrow (I+h')M=L(I+h')\Longleftrightarrow M+h'M=L+Lh'\Longleftrightarrow
$$
$$
h'M-Lh'=L-M
\Longleftrightarrow h'M-Lh'=-\beta\Longleftrightarrow \Phi_M(h')=-\beta
\Longleftrightarrow h'=\Phi^{-1}_M(-\beta).
$$
This proves the existence and unicity of $h'$.
Finally since
$$
\|h'\|_\infty\leq \|\Phi^{-1}_M\|_\infty \|\beta\|_\infty\leq d(A+B)\delta=\epsilon
$$
we are done.
\qed
\end{proof}

To finish with the proof of the theorem we will prove
$$
H'H=HH'=I.
$$
By the previous two lemmas we have
$$
H'HL=H'MH=LH'H.
$$
As $H'H=I+h''$ with $h''=h'(I+h)\in C_b(X)$ we have $h''=0$ by unicity hence
$$
H'H=I.
$$
On the other hand, we also have
$$
HH'M=MHH'.
$$
As before $HH'=I+v$ with $v\in C_b(X)$ so
$(I+v)M=M(I+v)$.
Now,
$$
(I+v)M=M(I+v)\Longleftrightarrow vM-Lv=\beta(I+v)\Longleftrightarrow \Phi_M(v)=\beta(I+v)-\beta
$$
$$
\Longleftrightarrow v=\Phi_M^{-1}(\beta(I+v)-\beta)
$$
hence $v$ is a fixed point of the map
$\hat{\Phi}:C_b(X)\to C_b(X)$ defined by
$$
\hat{\Phi}(\phi)=\Phi^{-1}_M(\beta(I+\phi)-\beta).
$$
Since
$$
Lip(\hat{\Phi})\leq \|\Phi_M^{-1}\| Lip(\beta)\leq d(A+B)\delta=\epsilon<1,
$$
$\hat{\Phi}$ has a unique point.
But $\hat{\Phi}(0)=\Phi^{-1}_M(0)=0$ so $v=0$ thus
$$
HH'=I.
$$
This completes the proof.
\qed
\end{proof}

A consequence of Bernardes-Messaoudi Theorem \ref{thA-3} above is given by the generalized Grobman-Hartman Theorem below.

Let $F:X\to X$ be a $C^1$-map of a Banach space.
We say that a fixed point $p$ of $F$ is a {\em generalized hyperbolic fixed point}
if its derivative $DF_p:X\to X$ is a generalized hyperbolic linear homeomorphism of $X$.
For such fixed points we have the following result.

\begin{cor}[Generalized Grobman-Hartman Theorem]
\label{hartman}
For every $C^1$ diffeomorphism of a Banach space $F:X\to X$ and every generalized hyperbolic fixed point $p$ of $F$ there are a homeomorphism
$H:X\to X$ and a neighborhood $U$ of $p$ in $X$ such that
$H\circ F=DF_p\circ H$ on $U$.
\end{cor}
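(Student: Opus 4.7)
The plan is a standard Grobman-Hartman-type argument that reduces the corollary to Theorem \ref{thA-3} via a cutoff. After translating the fixed point to the origin (if $p\neq 0$, replace $F$ by $T_{-p}\circ F\circ T_p$ where $T_p$ is translation by $p$; this preserves $DF_p$), we write $F=L+\varphi$ with $L=DF_0$ and $\varphi\in C^1$ satisfying $\varphi(0)=0$ and $D\varphi_0=0$. Thus $\|\varphi(x)\|=o(\|x\|)$ and $\sup_{\|x\|\leq r}\|D\varphi_x\|\to 0$ as $r\to 0$.

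Next I would construct a Lipschitz cutoff. Define $\rho_r:X\to[0,1]$ by $\rho_r(x)=1$ for $\|x\|\leq r/2$, $\rho_r(x)=2-2\|x\|/r$ for $r/2\leq\|x\|\leq r$, and $\rho_r(x)=0$ for $\|x\|\geq r$. Since $x\mapsto\|x\|$ is $1$-Lipschitz, $\rho_r$ is $(2/r)$-Lipschitz and supported in the closed ball of radius $r$. Now set
$$
\beta(x)=\rho_r(x)\,\varphi(x).
$$
The bound $\|\beta\|_\infty\leq\sup_{\|x\|\leq r}\|\varphi(x)\|$ tends to $0$ as $r\to 0$, and a straightforward splitting
$$
\beta(x)-\beta(y)=\rho_r(x)(\varphi(x)-\varphi(y))+(\rho_r(x)-\rho_r(y))\varphi(y)
$$
combined with the mean value estimate on $\varphi$ in $B(0,r)$ and $\sup_{\|y\|\leq r}\|\varphi(y)\|/r\to 0$ shows $Lip(\beta)\to 0$ as $r\to 0$. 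Note also that $\beta$ is bounded and Lipschitz on all of $X$, and coincides with $\varphi$ on $B(0,r/2)$; in particular $L+\beta\equiv F$ on that ball.

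The crux is now to invoke Theorem \ref{thA-3}. Given $\epsilon>0$, let $\delta>0$ be the constant of strong structural stability for the generalized hyperbolic operator $L=DF_0$. Choose $r>0$ so small that $\max\{\|\beta\|_\infty, Lip(\beta)\}<\delta$; by the previous paragraph this is possible. Theorem \ref{thA-3} supplies a homeomorphism $H_0:X\to X$ with $\|H_0-I\|_\infty\leq\epsilon$ and
$$
H_0\circ L=(L+\beta)\circ H_0.
$$
Pre- and post-composing with $H_0^{-1}$, and setting $H=H_0^{-1}$, gives $H\circ(L+\beta)=L\circ H$ on $X$. Restricting to $U=B(0,r/2)$, where $L+\beta=F$, yields
$$
H\circ F=L\circ H=DF_p\circ H\quad\text{on }U,
$$
which is exactly the conjugacy claimed (after translating back if $p\neq 0$).

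The only subtle point is the existence of the cutoff: in an arbitrary Banach space there need not exist any $C^1$ bump function, but we only require a Lipschitz one, and the norm-composed construction above works universally. Everything else is a direct plug-in: verifying the two smallness estimates on $\beta$, and reading off the conjugacy from Theorem \ref{thA-3} in inverted form.
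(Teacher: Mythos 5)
Your proof is correct and follows essentially the same route as the paper: reduce to $p=0$, cut off $\varphi=F-DF_0$ to a globally small bounded Lipschitz perturbation $\beta$, invoke the strong structural stability of $DF_0$ from Theorem \ref{thA-3}, and read off the conjugacy (in inverted form) on the ball where $L+\beta=F$; the paper merely asserts the extension step that you construct explicitly via the norm cutoff. The one point needing slightly more care is your Lipschitz estimate for $\beta$ when exactly one of $x,y$ lies outside $B(0,r)$: there replace $y$ by the point of the segment $[x,y]$ with norm $r$ (this does not change $\beta(y)=0$ and only decreases the distance), so that the mean value inequality for $\varphi$ is applied on a segment contained in $\overline{B(0,r)}$.
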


\begin{proof}
First we reduce the proof to the case $p=0$.
Consider the diffeomorphism $G(y)=F(y+p)-p$ for $y\in X$.
Then, $G(0)=0$ and $DG_0=DF_p$ so $0$ is a generalized hyperbolic fixed point of $G$.
If the result is valid for $p=0$, there is a homeomorphism
$K:X\to X$ and an open neighborhood $V$ of $0$ in $X$ such that
$$
K\circ G=DG_0\circ K \quad\mbox{ on }\quad V.
$$
Take $U=p+V$ and $H:X\to X$ defined by $H(x)=K(x-p)$ for $x\in X$.
If $x\in U$, $y=x-p\in V$ and so
\begin{eqnarray*}
H(F(x)) & = & K(F(x)-p)\\
& = & K(F(y+p)-p)\\
& = & K(G(y))\\
& = & DG_0(K(y))\\
& = & DF_0(H(x))
\end{eqnarray*}
proving $H\circ F=DF_p\circ H$ in $U$.
It remains to prove the result when $p=0$.

Put $L=DF_0$.
It follows from Theorem \ref{thA-3} that $L$ is strong structural stable.
Take $\delta>0$ from this property for $\epsilon=1$ and $\alpha=F-L$.
We have $\alpha(0)=F(0)=0$ and $D\alpha_0=0$.
Then, there is a neighborhood $U$ of $0$ such that
$\|\alpha|_U\|_\infty<\delta$ and $Lip(\alpha|_U)<\delta$.
From this we can extend $\alpha|_U$ to a bounded Lipschitz map $\beta:X\to X$ such that
$\|\beta\|_\infty\leq \delta$ and $Lip(\beta)\leq \delta$.
Then, there is a homeomorphism $H:X\to X$ such that
$H\circ (L+\beta)=L\circ H$.
So,
\begin{eqnarray*}
DF_0(H(x)) & = & L(H(x))\\
& = & H(L(x)+\beta(x))\\
& = & H(L(x)+F(x)-L(x)) \\
& = & H(F(x)),
\quad\quad\forall x\in U
\end{eqnarray*}
proving the result.
\end{proof}

\begin{rem}
It was proved also in [BMpams] that the topological conjugacy $H$ in the generalized Grobman-Hartman theorem above can be chosen to be Holder continuous.
\end{rem}

\section*{Remark about the strong structural stability}

\noindent
We start with the following result.

\begin{theo}
\label{car}
The following properties are equivalent for every $L\in GL(X)$:
\begin{enumerate}
\item
$r(L)<1$;
\item
there is $0<\gamma<\infty$ such that $\displaystyle\sum_{k=0}^\infty L^k(x_k)$ converges
and
$$
\left\| \displaystyle\sum_{k=0}^\infty L^k(x_k)\right\|\gamma \sup_{k\geq0}\|x_k\|,
$$
for every bounded sequence $(x_k)_{k\geq0}$ of $X$.
\end{enumerate}
\end{theo}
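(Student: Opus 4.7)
The plan is to prove the two implications separately; the easy direction (1) $\Rightarrow$ (2) will follow from Gelfand's spectral radius formula, while the interesting direction (2) $\Rightarrow$ (1) will be proved by a ``weighting trick'' that applies hypothesis (2) to geometric sequences $x_k = \lambda^k x$ in order to invert $I - \lambda L$ for every $\lambda$ in the closed unit disk.

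For (1) $\Rightarrow$ (2), assuming $r(L) < 1$ I would pick any $\rho \in (r(L),1)$. By Gelfand's formula, $\lim_k \|L^k\|^{1/k} = r(L) < \rho$, so there exists $C > 0$ with $\|L^k\| \leq C \rho^k$ for all $k \geq 0$. For any bounded sequence $(x_k)_{k\geq 0}$ one then has
$$\sum_{k=0}^\infty \|L^k(x_k)\| \leq C \left(\sum_{k=0}^\infty \rho^k\right)\sup_{k\geq 0}\|x_k\| = \frac{C}{1-\rho}\sup_{k\geq 0}\|x_k\|,$$
so the series converges absolutely and the bound holds with $\gamma = C/(1-\rho)$.

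For (2) $\Rightarrow$ (1), the key idea is to exploit the scalar freedom in the hypothesis. Fix $\lambda \in \mathbb{C}$ with $|\lambda|\leq 1$ and any $x \in X$; the sequence $x_k = \lambda^k x$ is bounded with $\sup_k \|x_k\| \leq \|x\|$, so by (2) the series $\sum_{k=0}^\infty L^k(\lambda^k x) = \sum_{k=0}^\infty (\lambda L)^k(x)$ converges and defines a linear operator $T_\lambda: X \to X$, $T_\lambda(x) = \sum_{k=0}^\infty (\lambda L)^k(x)$, which is bounded with $\|T_\lambda\| \leq \gamma$. A telescoping argument, combined with the continuity of $L$ (which lets $\lambda L$ commute with the infinite sum), shows $(I - \lambda L)T_\lambda = T_\lambda (I - \lambda L) = I$. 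Hence $I - \lambda L$ is invertible for every $|\lambda| \leq 1$; equivalently, $\mu \notin \sigma(L)$ whenever $|\mu| \geq 1$. Since $\sigma(L)$ is compact and non-empty in a complex Banach space, $\sigma(L) \subset \{|\mu|<1\}$ forces $r(L) = \max_{\mu \in \sigma(L)}|\mu| < 1$.

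The main obstacle I expect is the temptation, in the direction (2) $\Rightarrow$ (1), to test only with constant sequences $x_k = x$, which yields a bounded right inverse of $I - L$ and therefore only $1 \notin \sigma(L)$, far from $r(L) < 1$. The decisive step is to realise that the hypothesis allows arbitrary bounded sequences, so one may weight by $\lambda^k$ to sweep out the whole closed unit disk at once; after that insight the verification that $T_\lambda = (I-\lambda L)^{-1}$ is routine and the conclusion is immediate from compactness of the spectrum.
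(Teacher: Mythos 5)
Your proof is correct, and the direction $(2)\Rightarrow(1)$ takes a genuinely different route from the paper. The paper first shows that the constant $\gamma$ of item (2) is a shadowableness constant (via the explicit solution $y_n=\sum_{k\geq0}L^k(z_{n-k-1})$ of the cohomological equation), deduces $r(L)\leq 1$ from strong convergence of $\sum_k L^k$ and Banach--Steinhaus, and then rules out spectrum on the unit circle by combining the inclusion $\partial\sigma(L)\subset\sigma_a(L)$ with a shadowed pseudo-orbit built from an approximate eigenvector, contradicting $L^k\to0$ strongly. Your weighting trick $x_k=\lambda^k x$ bypasses all of this: it produces the Neumann-type inverse $T_\lambda=\sum_k(\lambda L)^k$ of $I-\lambda L$ directly for every $|\lambda|\leq1$ (the telescoping identity is legitimate since convergence of the series forces $(\lambda L)^k(x)\to0$, and $\|T_\lambda\|\leq\gamma$ because $|\lambda|^k\leq1$), whence $\sigma(L)$ avoids $\{|\mu|\geq1\}$ and compactness of the spectrum gives $r(L)<1$. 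Your argument is shorter and more elementary, needing only the definition of the spectrum and its compactness, whereas the paper's detour through the shadowing property and the approximate point spectrum ties the theorem to the chapter's main theme and reuses machinery (shadowableness constants, Dowson's boundary-spectrum fact) already developed there. The $(1)\Rightarrow(2)$ direction is essentially identical in both treatments.
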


\begin{proof}
First suppose Item (1) i.e. $r(L)<1$. As before this is equivalent to
$$
\gamma=\displaystyle\sum_{k=0}^\infty\|L^k\|<\infty.
$$
Now if $(x_k)_{k\geq0}$ bounded,
$$
\left\|
\displaystyle\sum_{k=0}^\infty L^k(x_k)\right\|
\leq \displaystyle\sum_{k=0}^\infty\|L^k(x_k)\|\leq \displaystyle\sum_{k=0}^\infty\|L^k\|\|x_k\|\leq \gamma\sup_{k\geq0}\|x_k\|
$$
proving Item (2).

Conversely, suppose that Item (2) holds.
We shall prove that $L$ has the shadowing property.
Indeed, we show that $\gamma$ in the statement of Item (2) is a shadowableness constant.
Take a bounded sequence $(z_n)_{n\in\mathbb{Z}}$ and define
the sequence $(y_n)_{n\in\mathbb{Z}}$ by
$$
y_n=\displaystyle\sum_{k=0}^\infty L^k(z_{n-k-1}),\quad\quad\forall n\in\mathbb{Z}.
$$
It follows from Item (2) that $y_n$ is well defined. Moreover,
$$
\|y_n\|\leq \left\|\displaystyle\sum_{k=0}^\infty L^k(z_{n-k-1})\right\|
\leq \gamma\sup_{n\in\mathbb{Z}}\|z_n\|,\quad\quad\forall n\in\mathbb{Z}
$$
then
$$
\sup_{n\in\mathbb{Z}}\|y_n\|\leq \gamma\sup_{n\in\mathbb{Z}}\|z_n\|.
$$
Also,
\begin{eqnarray*}
y_{n+1} & = &\displaystyle\sum_{k=0}^\infty L^k(z_{n-k})\\
& = & \left( \displaystyle\sum_{k=1}^\infty L^k(z_{n-k})\right)+z_n\\
& = & \displaystyle\sum_{k=0}^\infty L^{k+1}(z_{n-k-1})+z_n\\
& = & L\left(\displaystyle\sum_{k=0}^\infty L^k(z_{n-k-1})\right)+z_n\\
& = & L(y_n)+z_n,\quad\quad\forall n\in\mathbb{Z},
\end{eqnarray*}
proving the assertion about $\gamma$.
Therefore, $L$ has the shadowing property.

Next we observe that $\displaystyle\sum_{k=0}^\infty L^k$ converges strongly and so
$L^k\to0$ strongly as $k\to\infty$.
Then, $\sup_{k\geq0}\|L^k\|<\infty$ by the Banach-Steinhouse theorem so
$r(L)\leq1$.
It follows that the spectrum $\sigma(L)$ of $L$ is contained in the closed unit disk
$D\subset \mathcal{C}$.
Now suppose that $\sigma(L)\cap \partial D\neq\emptyset$.
Since $\sigma\subset D$, $\sigma(L)\cap \partial D\subset \partial \sigma(L)$ and so
there is $\lambda\in \sigma_a(L)\cap \partial D$ where $\sigma_a(L)$ is the approximated point spectrum of $L$ (e.g. [Dowson]).
Now take $\delta>0$ from the shadowing property of $L$ for $\epsilon=\frac{1}2$.
Since $\lambda\in\sigma_a(L)$, there is an unitary vector $x\in X$ such that
$\|L(x)-\lambda x\|<\delta$.
It follows that
$$
\|L(\lambda^nx)-\lambda^{n+1}x\|=|\lambda^k|\|L(x)-\lambda x\|<\delta,\quad\quad\forall n\in\mathbb{Z},
$$
so there is $y\in X$ such that
$\|L^k(y)-\lambda^kx\|\leq\dfrac{1}2$ for $k\geq0$.
Then,
$$
\|L^k(y)\|\geq \|\lambda^k x\|-\|L^k(y)-\lambda^kx\|\geq 1-\dfrac{1}2=\dfrac{1}2,\quad\quad\forall k\geq0.
$$
Since $L^k\to0$ strongly as $k\to0$, we get a contradiction which proves the result.
\qed
\end{proof}

The above corollary looks to be true for noninvertible linear operators of a Banach space $L:X\to X$.
This can be used to prove the following corollary.

\begin{cor}
A necessary and sufficient condition for $L\in GL(X)$ to be generalized hyperbolic is that there are
a splitting $X=S\oplus U$ with $S$ and $U$ closed subspaces and $0<\gamma<\infty$ such that
$L(S)\subset S$, $L^{-1}(U)\subset U$ and
$\displaystyle\sum_{k=0}^\infty L^k(x_k)$ (resp. $\displaystyle\sum_{k=0}^\infty L^{-k}(x_k)$ )
converges and
$$
\left\|
\displaystyle\sum_{k=0}^\infty L^k(x_k)\right\|
\leq \gamma\sup_{k\geq0}\|x_k\|
\quad\quad\left(\mbox{ resp. }
\left\|
\displaystyle\sum_{k=0}^\infty L^{-k}(x_k)\right\|
\leq \gamma\sup_{k\geq0}\|x_k\|\right)
$$
for every bounded sequence $(x_k)_{k\geq0}$ in $S$ (resp. $U$).
\end{cor}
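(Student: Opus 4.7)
The plan is to apply Theorem \ref{car} separately to the restrictions $L|_S:S\to S$ and $L^{-1}|_U:U\to U$, invoking the non-invertible version of that theorem alluded to in the remark just before the corollary. Note first that if the splitting $X=S\oplus U$ is given with $L(S)\subset S$ and $L^{-1}(U)\subset U$, then for $x_k\in S$ one has $L^k(x_k)=(L|_S)^k(x_k)$ and for $x_k\in U$ one has $L^{-k}(x_k)=(L^{-1}|_U)^k(x_k)$, so the summability hypotheses of the corollary are exactly the summability hypotheses of Theorem \ref{car} applied to these restricted operators on the Banach spaces $S$ and $U$.

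For the direction ``generalized hyperbolic implies summability'' I would start from the splitting provided by generalized hyperbolicity and use $r(L|_S)<1$ together with Gelfand's formula to get $\sum_{k\geq 0}\|L^k|_S\|<\infty$; the easy implication of Theorem \ref{car} then gives the series $\sum_{k=0}^\infty L^k(x_k)$ for bounded $(x_k)$ in $S$ together with the desired bound by a constant $\gamma_S$. The same argument applied to $L^{-1}|_U$ yields the second summability with a constant $\gamma_U$, and one takes $\gamma=\max\{\gamma_S,\gamma_U\}$.

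For the converse I would assume the splitting, the inclusions $L(S)\subset S$ and $L^{-1}(U)\subset U$, and the two summability bounds. Applying the nontrivial implication of Theorem \ref{car} to $L|_S$ and to $L^{-1}|_U$ would give $r(L|_S)<1$ and $r(L^{-1}|_U)<1$, which is exactly the remaining part of the definition of a generalized hyperbolic operator, so $L$ is generalized hyperbolic.

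The main obstacle is that Theorem \ref{car} is stated only for $L\in GL(X)$, while the restrictions $L|_S$ and $L^{-1}|_U$ need not be invertible, since the definition of generalized hyperbolicity only imposes the inclusions $L(S)\subset S$ and $L^{-1}(U)\subset U$, not the equalities. I would therefore have to justify the non-invertible extension of Theorem \ref{car} foreshadowed in the remark preceding the corollary. The easy direction transfers verbatim. For the hard direction, the original proof passes through the shadowing property and the approximate point spectrum of an invertible operator; in the non-invertible setting I would instead first take constant sequences $x_k=x$ to see that $L^k x\to 0$ for every $x$, conclude $\sup_k\|L^k\|<\infty$ via Banach--Steinhaus and hence $r(L)\leq 1$, and then rule out any $\lambda\in\sigma(L)\cap\partial D$ by testing the summability bound against the bounded sequence $x_k=\bar{\lambda}^k x$, where $x$ is a normalised approximate eigenvector for $\lambda$: a telescoping estimate $\|(\bar{\lambda}L)^k x-x\|\leq kM\,\|\bar{\lambda}L x-x\|$ would force $\|\sum_{k=0}^{N-1}L^k x_k\|$ to approach $N$ as the approximate eigenvector becomes sharper, contradicting the uniform bound $\gamma\sup_k\|x_k\|=\gamma$ for $N>\gamma$.
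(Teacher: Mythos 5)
Your proposal follows exactly the route the paper intends: the corollary is meant to fall out of Theorem \ref{car} applied to the restrictions $L|_S$ and $L^{-1}|_U$, and you correctly identify that the only real issue is that these restrictions need not be invertible, which is precisely what the paper's preceding remark (``looks to be true for noninvertible operators'') waves at without proof. Your patch for the hard direction in the noninvertible case is sound and is arguably more than the paper supplies: constant sequences give $L^kx\to0$, Banach--Steinhaus gives $r(L)\le1$, and for $\lambda\in\sigma(L)\cap S^1\subset\partial\sigma(L)\subset\sigma_a(L)$ the finitely supported test sequence $x_k=\bar\lambda^kx$ ($k<N$), together with the telescoping bound $\|(\bar\lambda L)^kx-x\|\le kM\|Lx-\lambda x\|$, forces $\bigl\|\sum_{k=0}^{N-1}L^kx_k\bigr\|\ge N-M\|Lx-\lambda x\|N(N-1)/2$, contradicting the uniform bound $\gamma$ once $N>\gamma$ and the approximate eigenvector is sharp enough; this bypasses the shadowing/invertibility machinery of the original proof of Theorem \ref{car}. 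The argument is complete and correct.
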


\chapter{Homoclinic points}

\noindent
Given a Banach space $X$ and $L\in GL(X)$ we say that $x\in X$ is a {\em homoclinic point}
if
$L^n(x)\to\infty$ as $n\to\pm\infty$.
We denote by
$H=H(L)$ the set of homoclinic points of $L$.

We have the following proposition
\begin{prop}
\label{mandeta}
If $L\in GL(X)$, then
$H$ is an invariant subspace which is closed if and only if $H=\{0\}$.
\end{prop}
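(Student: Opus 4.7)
The plan is to first verify the routine statements that $H$ is a subspace and $L$-invariant, then to concentrate on the nontrivial implication $H$ closed $\Longrightarrow H=\{0\}$. The trick will be to use Banach--Steinhaus to renorm $H$ so that $L|_H$ becomes an isometry, and then derive a contradiction from the defining property of homoclinic points.

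\textbf{Subspace and invariance.} If $x,y\in H$ and $\lambda\in\mathbb{C}$, then $L^n(x+y)=L^n(x)+L^n(y)\to 0$ and $L^n(\lambda x)=\lambda L^n(x)\to 0$ as $n\to\pm\infty$, so $H$ is a linear subspace. For invariance, if $x\in H$ then $L^n(L^{\pm 1}(x))=L^{n\pm 1}(x)\to 0$ as $n\to\pm\infty$, so $L(H)\subset H$ and $L^{-1}(H)\subset H$, i.e.\ $L(H)=H$.

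\textbf{The equivalence.} The direction ($H=\{0\}\Rightarrow H$ closed) is trivial. Conversely, assume $H$ is closed, so $(H,\|\cdot\|)$ is itself a Banach space and $L|_H\in GL(H)$. For every $x\in H$ the two-sided orbit $(L^n(x))_{n\in\mathbb{Z}}$ tends to $0$ at both ends, hence is bounded, so
$$
\sup_{n\in\mathbb{Z}}\|(L|_H)^n(x)\|<\infty,\quad\forall x\in H.
$$
The Banach--Steinhaus theorem then yields a constant $M<\infty$ with
$$
\sup_{n\in\mathbb{Z}}\|(L|_H)^n\|\leq M.
$$

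\textbf{Renorming trick.} Define a new norm on $H$ by $\|x\|'=\sup_{n\in\mathbb{Z}}\|L^n(x)\|$. The bound above gives $\|x\|\leq\|x\|'\leq M\|x\|$, so $\|\cdot\|'$ is equivalent to $\|\cdot\|$ (in particular it really is a norm, well-defined on $H$). A direct index shift shows that $L|_H$ is a linear isometry with respect to $\|\cdot\|'$:
$$
\|L(x)\|'=\sup_{n\in\mathbb{Z}}\|L^{n+1}(x)\|=\sup_{n\in\mathbb{Z}}\|L^n(x)\|=\|x\|'.
$$
Now fix any $x\in H$. By definition of $H$, $L^n(x)\to 0$ in $\|\cdot\|$, hence in the equivalent norm $\|\cdot\|'$; but the isometry property forces $\|L^n(x)\|'=\|x\|'$ for every $n$, so $\|x\|'=0$ and therefore $x=0$. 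This gives $H=\{0\}$, which completes the proof.

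\textbf{Where the work lies.} The only genuine content is the application of Banach--Steinhaus (which requires $H$ to be a Banach space in its own right, hence the closedness hypothesis) to produce the equivalent norm under which $L|_H$ is isometric; the contradiction with the homoclinic condition is then immediate. This is essentially the same mechanism used in the proof of Lemma~\ref{l2-A} and Theorem~\ref{thC-2}, but packaged more cleanly since here we do not even need the shadowing property.
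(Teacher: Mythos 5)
Your proof is correct and rests on exactly the same mechanism as the paper's: closedness makes $H$ a Banach space, Banach--Steinhaus then bounds $\sup_{n\in\mathbb{Z}}\|(L|_H)^n\|$, and this uniform bound is incompatible with the decay $L^n(x)\to 0$ unless $x=0$. The only cosmetic difference is that you package the final step as a renorming making $L|_H$ an isometry, whereas the paper phrases it via equicontinuity of $L|_H$ (picking $\delta$ for $\epsilon=1$ and pulling back along the orbit); both are immediate consequences of the same Banach--Steinhaus bound.
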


\begin{proof}
Clearly $H$ is an invariant subspace.
If $H=\{0\}$, then $H$ is closed. Conversely, suppose that $H$ is closed.
Then, $H$ is a Banach space with the induced norm.
By definition we have
$L^n(x)\to\infty$ as $n\to\infty$ for every $x\in H$ and so $\sup_{n\in\mathbb{Z}}\|L^n(x)\|<\infty$ for every $x\in H$. Then,
$\sup_{n\in\mathbb{Z}}\|L^n\|<\infty$ by the Banach-Steinhouse theorem.
From this we obtain that $L|_H\in GL(H)$ is equicontinuous.

Now, take $x\in H$ and $\delta$ from the equicontinuity of $L|_H$ for $\epsilon=1$.
Since $x\in H$, there is $n_0\in\mathbb{N}$ such that
$\|L^{n_0}(x)\|<\delta$ and so $\|L^{n+n_0}(x)\|<1$ for every $n\in\mathbb{Z}$.
Taking $n=-n_0$ we get $\|x\|<1$. Since $x\in H$ is arbitrary, $H=\{0\}$.
\qed
\end{proof}

Now we study the homoclinic points of a generalized hyperbolic linear homeomorphism $L\in GL(X)$.
First we show the following result.

\begin{theo}
\label{pingos}
If $L\in GL(X)$ is generalized hyperbolic, then $L$ is hyperbolic if and only if $H=\{0\}$.
\end{theo}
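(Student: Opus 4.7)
The plan is to prove the two implications separately, exploiting the decomposition $x = x^S + x^U \in S \oplus U$ furnished by the generalized hyperbolic splitting. Throughout I read the definition of homoclinic in the expected way, namely $L^n(x) \to 0$ as $n \to \pm\infty$ (the statement as printed has $\infty$, which must be a typo in light of the definition from Chapter~1).

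For the forward direction, assume $L$ is hyperbolic, so in addition to $L(S) \subset S$ and $L^{-1}(U) \subset U$ one has $L(S) = S$ and $L(U) = U$. I can then invoke the quantitative description recorded right after Definition~\ref{lockdown'}: there exist $K,\lambda>0$ with $\|L^n(x^S)\| \leq K e^{-\lambda n}\|x^S\|$ and $\|L^n(x^U)\| \geq K^{-1} e^{\lambda n}\|x^U\|$ for every $n\geq 0$ (the latter uses precisely $L(U)=U$). If $x \in H$, then $L^n(x)\to 0$ and $L^n(x^S)\to 0$ force $L^n(x^U)\to 0$, so the expansion estimate gives $x^U=0$; running the same argument on $L^{-1}$ gives $x^S=0$, and hence $x=0$.

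For the reverse direction I argue the contrapositive: assume $L$ is generalized hyperbolic but not hyperbolic, and exhibit a nonzero homoclinic point. Failure of hyperbolicity means at least one of the inclusions $L(S)\subset S$ or $L^{-1}(U)\subset U$ is strict. Consider first the case $L^{-1}(U)\subsetneq U$. Pick $z \in U\setminus L^{-1}(U)$ and decompose $L(z) = s + u$ with $s \in S$, $u \in U$; since $L(z)\notin U$ the $S$-component $s$ is nonzero. I claim $y:=s$ is the desired homoclinic point. Indeed, $y \in S$ together with $r(L|_S)<1$ gives $L^n(y)\to 0$ as $n\to+\infty$. On the other hand, $L^{-1}(y) = L^{-1}(L(z)-u) = z - L^{-1}(u)$; since $z\in U$ and $L^{-1}(u)\in L^{-1}(U)\subset U$, we get $L^{-1}(y)\in U$, so $L^{-n}(y) = (L^{-1}|_U)^{n-1}(L^{-1}(y)) \to 0$ by $r(L^{-1}|_U)<1$. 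Hence $y \in H\setminus\{0\}$, contradicting $H=\{0\}$.

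The only real obstacle is the bookkeeping around the asymmetric hypotheses $L(S)\subset S$ and $L^{-1}(U)\subset U$, which forces me to treat the symmetric case $L(S)\subsetneq S$ as well. That case is handled mirror-image: pick $z\in S\setminus L(S)$, decompose $L^{-1}(z) = a + b$ with $a\in S$, $b\in U$ (so $b\neq 0$ since $L^{-1}(z)\notin S$), and verify that $y:=b\in U$ gives $L^{-n}(y)\to 0$ from $y\in U$ and $L(y) = z - L(a) \in S$, so that $L^n(y) = (L|_S)^{n-1}(L(y))\to 0$. One can also bypass this duplication by first proving the short algebraic equivalence $L(S)=S \iff L(U)=U$ valid in the generalized hyperbolic setting (using bijectivity of $L$ on $X$ together with $S\cap U=\{0\}$), after which a single case suffices.
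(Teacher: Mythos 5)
Your proof is correct and follows essentially the same route as the paper: the forward direction amounts to the standard ``hyperbolic $\Rightarrow$ expansive, hence $H\subset E^c=\{0\}$'' argument written out with explicit norm estimates, and your reverse direction produces exactly the paper's homoclinic point $P_SL(z)\in L(U)\cap S$ (the paper dismisses the mirror case $L(S)\subsetneq S$ with a ``(say)'' where you treat it explicitly, and you correctly read the definition's ``$L^n(x)\to\infty$'' as a typo for ``$L^n(x)\to 0$'').
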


\begin{proof}
If $L$ is hyperbolic, then $L$ is expansive and so $H\subset E^c=\{0\}$ hence $H=\{0\}$.
Conversely,
suppose that $H=\{0\}$ but $L$ is not hyperbolic.
Let $X=S\oplus U$ the splitting of $L$.
If $L(S)\supset S$ and $L^{-1}(U)\supset U$, then $L$ is hyperbolic by definition.
Hence we can assume by contradiction that $L^{-1}(U)\supset U$ (say).
Then, there is $x\in U$ such that $P_SL(x)\neq0$ where
$P_S$ and $P_U$ are the projections associated to the splitting $X=S\oplus U$.
Write
$L(x)=P_SL(x)+P_UL(x)$
so $x-L^{-1}P_SL(x)=L^{-1}P_UL(x)$. As $x,L^{-1}P_UL(x)\in U$, $L^{-1}P_SL(x)\in U$
and so
$L^{-n}P_SL(x)\to0$ as $n\to\infty$. But we also have $P_SL(x)\in S$ hence $L^nP_SL(x)\to0$ as $n\to\infty$ proving $P_SL(x)\in H$.
Since $H=\{)\}$, this contradicts $P_SL(x)\neq0$ completing the proof.
\qed
\end{proof}

Question \ref{q1} and the above theorem motivate the following question:

\begin{question}
Prove (or disprove) that if $X$ is a Banach space and $L\in GL(X)$ has the shadowing property,
then $L$ is hyperbolic if and only if $H=\{0\}$.
\end{question}

We have seem the role played by the subspace $L(U)\cap S$ in the proof of the above theorem (point $x$ found there is in that subspace).
Now we use it to obtain a dense subspace of $H$.
More precisely, the following result hold (see \cite{jlm}).

\begin{theo}
\label{ferida}
Suppose that $L\in GL(X)$
exhibits a splitting $X=A\oplus B$ formed by closed subspaces $A$ and $B$ with the following properties:
\begin{enumerate}
\item
$L(A)\subset A$ and $L^{-1}(B)\subset B$;
\item
$L^n|_A\to 0$ and $L^{-n}|_B\to0$ strongly as $n\to\infty$.
\end{enumerate}
Then,
$\displaystyle\bigcup_{n,m\geq0}(L^n(B)\cap L^{-m}(A))$ is an invariant dense subspace
of $H$.
\end{theo}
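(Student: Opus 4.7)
The plan is to verify three things about $D := \bigcup_{n,m\geq 0}(L^n(B)\cap L^{-m}(A))$: that it is a subspace contained in $H$, that it is invariant under $L^{\pm 1}$, and that it is dense in $H$. The first two are essentially bookkeeping; the density is the substantive step.

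The inclusions $L^{-1}(B)\subset B$ and $L(A)\subset A$ give the increasing chains $B\subset L(B)\subset L^2(B)\subset\cdots$ and $A\subset L^{-1}(A)\subset L^{-2}(A)\subset\cdots$, so the family $\{L^n(B)\cap L^{-m}(A)\}_{n,m\geq 0}$ is directed by inclusion and $D$ is a subspace. To see $D\subset H$ I would argue from hypothesis (2): if $x\in L^n(B)\cap L^{-m}(A)$, then for $k\geq n$ one has $L^{-k}(x)=L^{-(k-n)}(L^{-n}(x))$ with $L^{-n}(x)\in B$ a fixed vector, so $L^{-k}(x)\to 0$ because $L^{-j}|_B\to 0$ strongly, and symmetrically $L^k(x)\to 0$. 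Invariance is immediate from $L(L^n(B)\cap L^{-m}(A))\subset L^{n+1}(B)\cap L^{-(m-1)}(A)\subset D$ when $m\geq 1$ (the case $m=0$ is handled by $L(A)\subset A$), and the analogous inclusion for $L^{-1}$.

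For density, I would fix $h\in H$ and write $h=h_A+h_B$ using the splitting $X=A\oplus B$. The hypotheses force both projections of $h$ to be themselves homoclinic: $L^k(h_A)\to 0$ and $L^{-k}(h_B)\to 0$ come directly from hypothesis (2), and then $L^k(h_B)=L^k(h)-L^k(h_A)\to 0$ and $L^{-k}(h_A)=L^{-k}(h)-L^{-k}(h_B)\to 0$. My proposed approximants are obtained by decomposing the orbit along $A\oplus B$: write $L^{-k}(h_A)=a_k+b_k$ and $L^k(h_B)=c_k+d_k$ with $a_k,c_k\in A$ and $b_k,d_k\in B$, and set $x_k:=L^k(b_k)$ and $y_k:=L^{-k}(c_k)$. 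A direct rewriting gives $x_k=h_A-L^k(a_k)\in A\cap L^k(B)$ and $y_k=h_B-L^{-k}(d_k)\in B\cap L^{-k}(A)$, so $x_k+y_k\in L^k(B)\cap L^{-k}(A)\subset D$, with error estimates $\|x_k-h_A\|\leq\|L^k|_A\|\,\|a_k\|$ and $\|y_k-h_B\|\leq\|L^{-k}|_B\|\,\|d_k\|$.

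The main (and essentially only) obstacle is concluding $x_k+y_k\to h$ from these estimates. Continuity of the projections gives $a_k,b_k,c_k,d_k\to 0$ in norm, but one must still rule out blow-up of the operator norms $\|L^k|_A\|$ and $\|L^{-k}|_B\|$. This is where I would invoke the Banach--Steinhaus theorem: the strong convergence of $L^k|_A$ to $0$ makes $\{L^k(a)\}_{k\geq 0}$ bounded for each $a\in A$, forcing $(L^k|_A)_{k\geq 0}$ to be uniformly bounded in operator norm, and the same argument applies to $(L^{-k}|_B)_{k\geq 0}$. With these uniform bounds the estimates yield $x_k\to h_A$ and $y_k\to h_B$, so $h$ lies in the closure of $D$, and density is established.
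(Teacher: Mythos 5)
Your proposal is correct and follows essentially the same route as the paper: your approximants $x_k=L^k(b_k)=L^kP_BL^{-k}(h_A)$ and $y_k=L^{-k}(c_k)=L^{-k}P_AL^{k}(h_B)$ are exactly the elements of $L^k(B)\cap A$ and $B\cap L^{-k}(A)$ used in the paper's proof, with the same error terms $L^k(a_k)$, $L^{-k}(d_k)$ and the same Banach--Steinhaus argument controlling $\sup_k\|L^k|_A\|$ and $\sup_k\|L^{-k}|_B\|$. No gaps; the bookkeeping for the subspace, invariance, and $D\subset H$ claims is also sound.
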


\begin{proof}
To simplify write
$$
\hat{H}=\displaystyle\bigcup_{n,m\geq0}(L^n(B)\cap L^{-m}(A)).
$$
If $x\in L^n(B)\cap L^{-m}(A)$,
$L^r(x)\to0$ as $r\to\pm\infty$ so $x\in H$.
Consequently, $L^n(B)\cap L^{-m}(A)\subset H$ for all $n,m\geq0$ hence
$$
\hat{H}\subset H.
$$
On the other hand, if $x_1\in L^{n_1}(B)\cap L^{-m_1}(A)$ and $x_2\in L^{n_2}(B)\cap L^{-m_2}(A)$ with $n_i,m_i\geq 0$ ($i=1,2$)
and $\lambda\in \mathbb{C}$, then
$$
x_1+x_2\in L^{\max\{n_1,n_2\}}(B)\cap L^{-\max\{m_1,m_2\}}(A)
\quad\mbox{ and }\quad
\lambda x_1\in L^{n_1}(B)\cap L^{-m_2}(A),
$$
so $\hat{H}$ is a subspace.
Since
$$
L(\hat{H})=\displaystyle\bigcup_{n,m\geq0}(L^{n+1}(B)\cap L^{-m}(L(A)))\subset \displaystyle\bigcup_{n\geq1,m\geq0}(L^n(B)\cap L^{-m}(A))
\subset \hat{H}
$$
and
$$
L^{-1}(\hat{H})=\displaystyle\bigcup_{n,m\geq0}(L^{n}(L^{-1}(B))\cap L^{-m-1}(A))\subset \displaystyle\bigcup_{n\geq0,m\geq1}(L^n(B)\cap L^{-m}(A))
\subset \hat{H},
$$
$L(\hat{H})= \hat{H}$. Therefore, $\hat{H}$ is invariant.

It remains to prove that $\hat{H}$ is dense in $H$.
Let $P_A:X\to A$ and $P_B:X\to B$ be the associated projections.
Take $x\in H$.
For all $n\in \mathbb{N}$ one has,
$$
L^{-n}P_A(x)=P_AL^{-n}P_A(x)+P_BL^{-n}P_A(x)
$$
so
$$
P_A(x)=L^nP_AL^{-n}P_A(x)+L^nP_BL^{-n}P_A(x).
$$
Since $P_A(x),L^nP_AL^{-n}P_A\in A$, we have
$$
L^nP_BL^{-n}P_A(x)\in A.
$$
But also
$P_BL^{-n}P_A(x)\in B$ so
$$
L^nP_BL^{-n}P_A(x)\in L^n(B)
$$
thus
\begin{equation}
\label{pec}
L^nP_BL^{-n}P_A(x)\in L^n(B)\cap S,\quad\quad\forall n\geq0.
\end{equation}
Moreover,
$x=P_A(x)+P_B(x)$ so $L^{-n}x=L^{-n}P_A(x)+L^{-n}P_B(x)$. Since $x\in H$ and $L^{-n}|_B\to0$ strongly, we get
$$
\lim_{n\to\infty} L^{-n}P_A(x)=\lim_{n\to0}(L^{-n}x-L^{-n}P_B(x))=0.
$$
Now we observe that since $L^n|_A\to 0$ strongly, $\sup_{n\geq0}\|L^n|_A\|<\infty$ by the Banach-Steinhouse theorem so
\begin{eqnarray*}
\lim_{n\to\infty}\|P_A(x)-L^nP_BL^{-n}P_A(x)\| & = & \lim_{n\to\infty}\|L^nP_AL^{-n}P_A(x)\| \\
& \leq &
(\sup_{n\geq0}\|L^n|_A\|)\|P_A\|\lim_{n\to\infty}\|L^{-n}P_A(x)\| \\
& = & 0.
\end{eqnarray*}

Then, \eqref{pec} implies
\begin{equation}
\label{temer}
P_A(x)\in \overline{\left( \bigcup_{n\geq0}(L^n(B)\cap A)\right)}\subset
\overline{\hat{H}}.
\end{equation}

On the other hand, for all $m\in \mathbb{N}$ one has,
$$
L^mP_B(x)=P_AL^mP_B(x)+P_BL^mP_B(x)
$$
so
$$
P_B(x)=L^{-m}P_AL^mP_B(x)+L^{-m}P_BL^mP_B(x).
$$
Since $P_B(x),L^{-m}P_BL^mP_B\in B$, we have
$$
L^{-m}P_AL^mP_B(x)\in B.
$$
But also
$P_AL^mP_B(x)\in A$ so
$$
L^{-m}P_AL^mP_B(x)\in L^{-m}(A)
$$
thus
\begin{equation}
\label{pec'}
L^{-m}P_AL^mP_B(x)\in B\cap L^{-m}A,\quad\quad\forall m\geq0.
\end{equation}
Again
$x=P_A(x)+P_B(x)$ so $L^mx=L^mP_A(x)+L^mP_B(x)$.
Since $x\in H$ and $L^m|_A\to0$ strongly, we get
$$
\lim_{m\to\infty} L^mP_B(x)=\lim_{m\to0}(L^mx-L^mP_A(x))=0.
$$
Again we observe that $L^{-m}|_B\to 0$ strongly so $\sup_{n\geq0}\|L^{-m}|_B\|<\infty$ by the Banach-Steinhouse theorem thus
\begin{eqnarray*}
\lim_{m\to\infty}\|P_B(x)-L^{-m}P_AL^mP_B(x)\| & = & \lim_{m\to\infty}\|L^{-m}P_BL^mP_B\| \\
& \leq &
(\sup_{m\geq0}\|L^{-m}|_B\|)\|P_B\|\lim_{m\to\infty}\|L^mP_B(x)\| \\
& = & 0.
\end{eqnarray*}
Then, \eqref{pec'} implies
\begin{equation}
\label{temer'}
P_B(x)\in \overline{\left( \bigcup_{m\geq0}(B\cap L^{-m}(A))\right)}\subset
\overline{\hat{H}}.
\end{equation}
Finally, $\hat{H}$ is a subspace so $\overline{\hat{H}}$ also is and so $\overline{\hat{H}}+\overline{\hat{H}}=\overline{\hat{H}}$.
Then, \eqref{temer} and \eqref{temer'} imply
$$
x=P_A(x)+P_B(x)\in \overline{\hat{H}}+
\overline{\hat{H}}=\overline{\hat{H}}
$$
completing the proof.
\qed
\end{proof}

Clearly, a generalized hyperbolic splitting $X=S\oplus U$ satisfies the hypothesis of Theorem \ref{ferida}
with $A=S$ and $B=U$.
From this we obtain the following corollary.

\begin{cor}
\label{lavo1}
If $L\in GL(X)$ is generalized hyperbolic with splitting $X=S\oplus U$,
then
$\displaystyle\bigcup_{n,m\geq0}(L^n(U)\cap L^{-m}(S))$ is an invariant dense subspace of $H$.
\end{cor}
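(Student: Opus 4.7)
My plan is to derive Corollary \ref{lavo1} as an immediate application of Theorem \ref{ferida}, simply by checking that the generalized hyperbolic splitting $X = S \oplus U$ fits the hypotheses of that theorem with the identification $A = S$ and $B = U$. So the proof will consist entirely of verifying the two hypotheses, after which the conclusion is copied verbatim.

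First I would recall the definition of generalized hyperbolicity: $L(S) \subset S$ and $L^{-1}(U) \subset U$, together with $r(L|_S) < 1$ and $r(L^{-1}|_U) < 1$. The inclusions are exactly hypothesis (1) of Theorem \ref{ferida} with $A = S$, $B = U$, so this step is immediate.

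Next I would verify hypothesis (2), namely that $L^n|_S \to 0$ and $L^{-n}|_U \to 0$ strongly as $n \to \infty$. Here I would use Gelfand's spectral radius formula: since $r(L|_S) < 1$, we can choose $r(L|_S) < \beta < 1$, and then $\|L^n|_S\|^{1/n} \to r(L|_S) < \beta$, so $\|L^n|_S\| \leq \beta^n$ for all sufficiently large $n$. In particular $\|L^n|_S\| \to 0$, which gives convergence not merely strongly but even in operator norm; strong convergence follows a fortiori. The argument for $\|L^{-n}|_U\| \to 0$ is identical, using $r(L^{-1}|_U) < 1$.

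Finally, with both hypotheses of Theorem \ref{ferida} verified, that theorem yields directly that
$$\bigcup_{n,m \geq 0}\bigl(L^n(U) \cap L^{-m}(S)\bigr)$$
is an invariant dense subspace of $H$, which is exactly the conclusion of the corollary. I do not anticipate any obstacle: the entire content of the corollary is that a generalized hyperbolic splitting is an instance of the more general structural setup of Theorem \ref{ferida}, and the only non-trivial ingredient (strong convergence of the iterates on the stable/unstable parts) is furnished by the spectral radius hypothesis.
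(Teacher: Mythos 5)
Your proposal is correct and follows exactly the route the paper takes: the paper also obtains Corollary \ref{lavo1} by observing that a generalized hyperbolic splitting satisfies the hypotheses of Theorem \ref{ferida} with $A=S$ and $B=U$. Your explicit verification of hypothesis (2) via Gelfand's formula (norm convergence $\|L^n|_S\|\to 0$, $\|L^{-n}|_U\|\to 0$, hence strong convergence) is a valid filling-in of what the paper dismisses as ``clear.''
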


An interesting question is whether every linear homeomorphism under the conditions of Theorem \ref{ferida}
is generalized hyperbolic. However, the answer is negative by the following example.
\begin{exam}
\label{tucides}
There are a Banach space $X$ and $L\in GL(X)$ without the shadowing property but satisfying the hypothesis of Theorem \ref{ferida}.
\end{exam}

\begin{proof}
(Based on Example 2.1 in \cite{k}).
Choose any sequence $(\lambda_n)_{n\in\mathbb{N}}$ in the open unit complex disk such that
$$
0<\inf_{n\in\mathbb{N}}\|\lambda_n\|\leq \sup_{n\in\mathbb{N}}\|\lambda_n\|=1\quad\mbox{ and }\quad 1\notin\overline{\{\lambda_n:n\in\mathbb{N}\}}.
$$
Define $X=l^1$ i.e.
$$
X=\{ \xi=(\xi_k)_{k\in\mathbb{N}}\in \mathbb{C}^\mathbb{N}\mbox{ such that }\displaystyle\sum_{k=0}^\infty|\xi_k|<\infty\}.
$$ 
Define $L:X\to X$ by $L(\xi)=(L(\xi)_k)_{k\in\mathbb{N}}$ where
$$
L(\xi)_k =\lambda_k\xi_k,\quad\quad\forall k\in\mathbb{N}.
$$
We have $L\in GL(X)$ and $\L\|\leq1$.

We assert that $L^n\to 0$ strongly as $n\to\infty$.
Fix $\xi\in X$ and $\epsilon>0$.
Take $N\in\mathbb{N}$ such that
$$
\displaystyle\sum_{k=N+1}^\infty|\xi_k|<\dfrac{\epsilon}2.
$$
Since $|\lambda_i|<1$ for $1\leq i\leq N$, there is $n_0\in\mathbb{N}$ such that
$$
\displaystyle\sum_{k=1}^N|\lambda^n_k\xi_k|<\dfrac{\epsilon}2,\quad\quad\forall n\geq n_0.
$$
Then,
\begin{eqnarray*}
\|L^n(\xi)\| & \leq & \displaystyle\sum_{k=1}^N|\lambda_k^n\xi_k|+\displaystyle\sum_{k=N+1}^\infty|\lambda^n_k\xi_k|\\
& \leq & \dfrac{\epsilon}2+\displaystyle\sum_{k=N+1}^\infty|\xi_k|\\
& \leq & \dfrac{\epsilon}2+\dfrac{\epsilon}2\\
& = & \epsilon
\end{eqnarray*}
$\forall n\geq n_0$
proving the assertion.
It follows that $L$ satisfies the hypothesis of Theorem \ref{ferida} with $A=X$ and $B=\{0\}$.

It remains to prove that $L$ does not have the shadowing property.
Assume by contradiction that it does.
If there is $\lambda\in\sigma(L)$ with $|\lambda|=1$ we have $\lambda\in \partial\sigma(L)$ (for $\|L\|\leq 1$).
Since $\partial\sigma(L)\subset \sigma_a(L)$ (the approximated point spectrum of $L$, see [Dowson]),
$\lambda\in \sigma_a(L)$.
Now let $\delta>0$ be given by the shadowing property for $\epsilon=\dfrac{1}2$.
Since $\lambda\in\sigma_a(L)$, there is $\xi\in X$ unitary such that $\|L(\xi)-\lambda \xi\|\leq\delta$.
We have $\|L(\lambda^n \xi)-\lambda^{n+1}\xi\|=|\lambda^n|\|L(\xi)-\lambda \xi\|\leq\delta$ for every $n\in\mathbb{Z}$ hence there is $\eta\in A$ such that
$\|L^n(\eta)-\lambda^n\xi\|\leq \dfrac{1}2$ for all $n\in\mathbb{Z}$.
Then, $\|L^n(\eta)\|\geq \|\lambda^n\xi\|-\|L^n(\eta)-\lambda^n\xi\| \geq \dfrac{1}2$ for $n\geq0$ contradicting that $L^n\to0$ strongly as $n\to\infty$.
Therefore, $L$ does not have the shadowing property completing the proof.
\qed
\end{proof}

\begin{remark}
It follows in particular that the above example is not generalized hyperbolic (by Theorem \ref{thA-2}).
It is possible to modify it to get one with $A\neq \{0\}\neq B$
(to prove that this modification has not the shadowing property one uses Theorem \ref{agripino}).
\end{remark}

Let us use Theorem \ref{ferida} to prove the following result (see Item (1) of Theorem 2 in \cite{cgp}).

\begin{cor}
\label{lavo2}
Let $X$ is a Banach space and $L\in GL(X)$ be generalized hyperbolic with splitting $X=S\oplus U$.
Then, $L$ is hyperbolic if and only if $L(U)\cap S=\{0\}$.
\end{cor}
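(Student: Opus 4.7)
The plan is to combine Theorem \ref{pingos} with Corollary \ref{lavo1} and carry out a short linear-algebra computation. The forward direction is immediate: if $L$ is hyperbolic, then by definition $L(U)=U$, so $L(U)\cap S=U\cap S=\{0\}$.

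For the converse I will assume $L(U)\cap S=\{0\}$. By Theorem \ref{pingos} it suffices to prove $H=\{0\}$, and by Corollary \ref{lavo1} the subspace $\hat{H}=\bigcup_{n,m\geq 0}(L^n(U)\cap L^{-m}(S))$ is dense in $H$; so I plan to show $\hat{H}=\{0\}$, which will force $H=\{0\}$ and hence hyperbolicity.

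The core of the argument is to upgrade the single identity $L(U)\cap S=\{0\}$ to the two full-strength equalities $L(U)=U$ and $L(S)=S$. For the first, given $v\in L(U)$ I will decompose $v=v_S+v_U$ along the splitting; since $L^{-1}(U)\subseteq U$ yields $U\subseteq L(U)$, we have $v_U\in L(U)$, so $v_S=v-v_U\in L(U)$; but $v_S\in S$ too, whence $v_S\in L(U)\cap S=\{0\}$ and $v=v_U\in U$. For the second, given $s\in S$ I will decompose $L^{-1}(s)=a_S+a_U$; applying $L$ gives $L(a_U)=s-L(a_S)\in S$ (since $L(S)\subseteq S$), so $L(a_U)\in L(U)\cap S=\{0\}$, and the injectivity of $L$ forces $a_U=0$, that is, $L^{-1}(s)=a_S\in S$.

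Once $L(U)=U$ and $L(S)=S$ are in hand, a trivial iteration gives $L^n(U)=U$ and $L^{-m}(S)=S$ for all $n,m\geq 0$, so every piece in the union defining $\hat{H}$ collapses to $U\cap S=\{0\}$. Hence $\hat{H}=\{0\}$, density yields $H=\{0\}$, and Theorem \ref{pingos} produces hyperbolicity. The main obstacle is the upgrade step; it is a short computation, but one must insert the inclusion $U\subseteq L(U)$ at exactly the right moment in the first half and use the injectivity of $L$ decisively in the second. One could also finish more directly at this point, since the equalities $L(U)=U$ and $L(S)=S$ together with the spectral-radius bounds already built into generalized hyperbolicity give hyperbolicity by the very definition, bypassing $H$ altogether; but I prefer the density route since it lines up with the statement's encouragement to use Theorem \ref{ferida}.
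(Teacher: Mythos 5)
Your converse is correct and is essentially the paper's own argument: the paper likewise upgrades $L(U)\cap S=\{0\}$ to the two equalities $L(U)=U$ and $L(S)=S$ (it phrases the computation with the projections, showing $P_SL(U)\subset L(U)\cap S$ and $P_UL^{-1}(S)\subset U\cap L^{-1}(S)$, but this is the same content as your decomposition-plus-injectivity argument), and then concludes hyperbolicity directly from the splitting characterization. In other words, the ``more direct'' finish you mention in passing is exactly what the paper does; the detour through $\hat{H}$, density and Theorem \ref{pingos} is valid but adds nothing, since once $L(S)=S$ and $L(U)=U$ are in hand the spectral radius bounds already present in the generalized hyperbolic splitting give hyperbolicity by definition.

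The one step that is too quick is the forward direction. Hyperbolicity of $L$ means $\sigma(L)\cap S^1=\emptyset$, equivalently that \emph{some} splitting $X=S'\oplus U'$ exists with $L(S')=S'$, $L(U')=U'$ and the spectral radius bounds; it does not say ``by definition'' that the \emph{given} generalized hyperbolic splitting $X=S\oplus U$ satisfies $L(U)=U$. You need a short extra argument identifying the two splittings: for a hyperbolic operator $S'=E^{cs}$ and $U'=E^{cu}$, and since $S\subset E^{cs}$, $U\subset E^{cu}$ and $X=S\oplus U=E^{cs}\oplus E^{cu}$, one gets $S=E^{cs}$ and $U=E^{cu}$, whence $L(U)=U$. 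Alternatively, do what the paper does: hyperbolic implies expansive, so $H\subset E^c=\{0\}$ by Theorem \ref{pingos}, and $L(U)\cap S$ is one of the pieces of the dense subspace of $H$ from Corollary \ref{lavo1}, hence trivial. Either fix is two lines; with it your proof is complete.
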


\begin{proof}
If $L$ is hyperbolic, $H=\{0\}$ by Theorem \ref{pingos} so
$L(U)\cap S=\{0\}$ by Corollary \ref{lavo1}.

To prove the converse we observe that
given $x\in U$, 
$L(x)=P_SL(x)+P_UL(x)$ so
$L^{-1}P_SL(x)=x-L^{-1}P_UL(x)\in U$ thus
$P_SL(U)\subset L(U)$ hence
$$
P_SL(U)\subset L(U)\cap S.
$$
Replacing $U$ and $L$ by $S$ and $L^{-1}$ in the above argument we get
$$
P_UL^{-1}(S)\subset U\cap L^{-1}(S).
$$
Then, if $L(U)\cap S=\{0\}$, $P_SL(U)=\{0\}$ so $L(U)\subset U$ thus
$L(U)=U$ likewise
$L(S)=S$ then $L$ is hyperbolic.
\qed
\end{proof}

Let us give an application based on the following lemma.

\begin{lem}
\label{perseguido}
Let $X$ a Banach space and $W,R\in GL(X)$ be such that there is a splitting
$X=S\oplus U$ formed by closed subspaces $S$ and $U$ with the following properties:
\begin{enumerate}
\item
$W(S)\subset S$, $W^{-1}(U)\subset U$, $R(S)\subset S$ and $R^{-1}(U)\subset U$.
\item
$\|W^{-1}|_U\|\|R^{-1}\|<1$, $\|R\| \|W|_S\|<1$.
\end{enumerate}
Then, $L=R\circ W$ is generalized hyperbolic.
If additionally, $R(U)\cap S\neq\{0\}$, then $L$ is not hyperbolic.
\end{lem}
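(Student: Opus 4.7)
The plan is to verify directly the four defining conditions of generalized hyperbolicity for $L = R \circ W$ with respect to the given splitting $X = S \oplus U$, and then to derive the non-hyperbolicity statement by invoking Corollary \ref{lavo2} (which characterizes hyperbolicity among generalized hyperbolic operators by the condition $L(U) \cap S = \{0\}$).

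For the first claim, I would check the four items one by one. The invariance $L(S) \subset S$ follows from $L(S) = R(W(S)) \subset R(S) \subset S$, and similarly $L^{-1}(U) = W^{-1}(R^{-1}(U)) \subset W^{-1}(U) \subset U$ using hypothesis (1). For the contraction estimates I would use submultiplicativity of the operator norm together with hypothesis (2): since $W|_S$ maps $S$ to $S$ and $R|_S$ is the restriction of $R$, one has
$$
\|L|_S\| = \|R \circ W|_S\| \leq \|R\|\,\|W|_S\| < 1,
$$
so $r(L|_S) \leq \|L|_S\| < 1$. Dually, $R^{-1}|_U$ and $W^{-1}|_U$ both send $U$ into $U$, and $\|R^{-1}|_U\| \leq \|R^{-1}\|$, hence
$$
\|L^{-1}|_U\| = \|W^{-1} \circ R^{-1}|_U\| \leq \|W^{-1}|_U\|\,\|R^{-1}\| < 1,
$$
so $r(L^{-1}|_U) < 1$. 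This gives the generalized hyperbolic splitting $X = S \oplus U$ for $L$.

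For the second claim, by Corollary \ref{lavo2} it is enough to exhibit a nonzero element of $L(U) \cap S$. Fix any $0 \neq y \in R(U) \cap S$ and pick $u \in U$, $u \neq 0$, with $R(u) = y$. Set $v = W^{-1}(u)$; the inclusion $W^{-1}(U) \subset U$ from hypothesis (1) guarantees $v \in U$. Then
$$
L(v) = R(W(W^{-1}(u))) = R(u) = y \in S,
$$
and $y \neq 0$, so $L(U) \cap S \neq \{0\}$ and $L$ is not hyperbolic.

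There is no real obstacle here: the whole argument is a bookkeeping exercise in the definitions, together with the characterization already established in Corollary \ref{lavo2}. The only point where one has to pay attention is the mild asymmetry in hypothesis (2) (one factor uses $\|R^{-1}\|$, the other $\|W|_S\|$ rather than $\|R^{-1}|_U\|$ and $\|W\|$), which is handled transparently by the inequalities $\|R^{-1}|_U\| \leq \|R^{-1}\|$ and $\|R|_S\| \leq \|R\|$ coming from restriction.
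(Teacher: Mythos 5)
Your proof is correct and follows essentially the same route as the paper: the generalized hyperbolic splitting is verified by the identical inclusion and norm computations, and the non-hyperbolicity is obtained from $L(U)\cap S\neq\{0\}$ via Corollary \ref{lavo2}, which is just the packaged form of the paper's appeal to Theorems \ref{ferida} and \ref{pingos}. In fact your handling of the inclusion $R(U)\cap S\subset L(U)\cap S$ (using only $U\subset W(U)$ rather than the equality $W(U)=U$ that the paper implicitly asserts when it writes $R(W(U))\cap S=R(U)\cap S$) is slightly more careful than the original.
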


\begin{proof}
Note
$L(S)=R(W(S))\subset R(S)\subset S$ and $L^{-1}(U)=W^{-1}(R^{-1}(U))\subset W^{-1}(U)\subset U$.
In addition $\|L|_S\|\leq \|R\|\|W|_S\|<1$ and $\|L^{-1}|_U\|=\|W^{-1}|_U\|\|R^{-1}\|<1$.
Therefore,  $X=S\oplus U$ is a generalized hyperbolic splitting of $L$ and so $L$ is generalized hyperbolic.
Now assume $R(U)\cap S\neq\{0\}$.
We have $L(U)\cap S\subset H$ by Theorem \ref{ferida} and
$L(U)\cap S=R(W(U))\cap S=R(U)\cap S\neq\emptyset$ so $H\neq\{0\}$ thus
$L$ is not hyperbolic by Theorem \ref{pingos}. This ends the proof.
\qed
\end{proof}

An application of the above results is given below.

\begin{exam}
\label{lockdown}
There is a Banach space $X$ and $L\in GL(X)$ such that $L$ is generalized hyperbolic but not hyperbolic.
\end{exam}

\begin{proof}
Let $X$ be as in Example \ref{tucides}.
Fix $0<\alpha<1$ and define $W\in GL(X)$ by
$W(\xi)_k=\lambda_k\xi_k$ for $k\in\mathbb{Z}$ where
$\lambda_k=\alpha$ (for $k\leq0$) or $\frac{1}\alpha$ (for $k>0$).
We have the splitting $X=S\oplus U$ by closed subspaces $S$ and $U$ defined by
$$
S=\{\xi\in l^1:\xi_k=0,\,\ k>0\}\mbox{ and }U=\{\xi\in l^1:\xi_k=0,\,\, k\leq0\}.
$$
It follows from the definitions that $\|W^{-1}|U\|=\|W|_S\|=\alpha$.
Now define $R\in GL(X)$ by
$R(\xi)_k=\xi_{k+1}$ for $k\in\mathbb{Z}$.
It follows that $R$ is an isometry so $\|R\|=\|R^{-1}\|=1$ thus
$\|W^{-1}|_U\|\|R^{-1}\|=\|R\|\|W|_S\|=\alpha<1$.
Therefore, $L=R\circ W$ is generalized hyperbolic by Lemma \ref{perseguido}.
Since $e_0\in R(U)\cap S$ (where $e_0\in X$ denotes the sequence with $1$ in the zero entry and $0$ in the other entries), we have $R(U)\cap L\neq\{0\}$ so $L$ is not hyperbolic.
\qed
\end{proof}

\chapter{A strange operator and applications}

\noindent
Let $X$ be a Banach space.
Define
$$
l^\infty(X)=\{\xi=\xi(\xi_n)_{n\in\mathbb{Z}}\in X^\mathbb{Z}:\sup_{n\in\mathbb{Z}}\|\xi_n\|<\infty\}
$$
namely the set of bounded sequences in $X$.
It follows that $l^\infty(X)$ is a Banach space if endowed
with the operations
$\xi+\eta=(\xi_n+\eta_n)_{n\in\mathbb{Z}}$, $\lambda\xi=(\lambda\xi_n)_{n\in\mathbb{Z}}$
for $\xi,\eta\in l^\infty(X)$ and $\lambda\in\mathbb{C}$
with the norm
$$
\|\xi\|=\sup_{n\in\mathbb{Z}}\|\xi_n\|.
$$
Given $L\in GL(X)$ we define the map
$L_\infty:l^\infty(X)\to l^\infty(X)$ by $L_\infty(\xi)=(L_\infty(\xi)_n)_{n\in\mathbb{Z}}$ where
$$
L_\infty(\xi)_n=\xi_{n+1}-L(\xi_n).
$$
We first note that $L_\infty$ is a linear map.
Since
$$
\|L_\infty(\xi)\|=\sup_{n\in\mathbb{Z}}\|\xi_{n+1}-L(\xi_n)\|\leq (1+\|L\|)\|\xi\|
$$
$\forall \xi\in l^\infty(X),$
$L_\infty:l^\infty(X)\to l^\infty(X)$ is a bounded operator with norm
$$
\|L_\infty\|\leq 1+\|L\|.
$$
We also have
\begin{equation}
\label{helado}
\|L_\infty-L'_\infty\|\leq \|\leq\|L-L'\|,\quad\quad\forall L,L'\in GL(X).
\end{equation}
Since
$$
L_\infty(\xi)=0 \iff L(\xi_n)=\xi_{n+1},\,\,\quad\forall n\in\mathbb{Z} \iff \xi=(L^n(\xi_0))_{n\in\mathbb{Z}}
$$
we obtain the following characterization
 $$
 Ker(L_\infty)=\{(L^n(x))_{n\in\mathbb{Z}}:x\in E^c\}.
$$
From this we deduce the following result.
\begin{prop}
$L$ is expansive if and only if
$L_\infty$ is injective.
\end{prop}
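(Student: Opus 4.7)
The plan is to chain together three equivalences that are essentially already on the page, so the proof will be short and mostly bookkeeping.

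First I would note that $L_\infty$ is injective iff $\mathrm{Ker}(L_\infty)=\{0\}$, where $0$ here denotes the identically zero sequence in $l^\infty(X)$. Using the characterization
$$
\mathrm{Ker}(L_\infty)=\{(L^n(x))_{n\in\mathbb{Z}}:x\in E^c\}
$$
recorded immediately before the proposition, the injectivity of $L_\infty$ is equivalent to the implication: for every $x\in E^c$, the bi-infinite sequence $(L^n(x))_{n\in\mathbb{Z}}$ is the zero sequence. Since $L^0=I_X$, this sequence being zero forces $x=0$, so the condition is just $E^c=\{0\}$ (and conversely $E^c=\{0\}$ clearly makes the kernel trivial).

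Second, I would invoke Lemma \ref{ele}, which asserts exactly that $L\in GL(X)$ is expansive iff $E^c=\{0\}$. Combining the two equivalences
$$
L_\infty\text{ injective}\iff E^c=\{0\}\iff L\text{ expansive}
$$
yields the proposition.

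The only point that might deserve a sentence of justification is the first equivalence: one should remark that $x\mapsto(L^n(x))_{n\in\mathbb{Z}}$ is injective (because of the $n=0$ entry) and that its image lies in $l^\infty(X)$ precisely when $x\in E^c$, both of which are immediate from the definitions of $E^c$ and $l^\infty(X)$. There is no genuine obstacle; the work was done in setting up $E^c$ and computing $\mathrm{Ker}(L_\infty)$.
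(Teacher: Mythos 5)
Your proof is correct and is exactly the deduction the paper intends: the text states the kernel characterization $\mathrm{Ker}(L_\infty)=\{(L^n(x))_{n\in\mathbb{Z}}:x\in E^c\}$ and then says ``from this we deduce'' the proposition, leaving precisely the two equivalences you spell out (kernel trivial iff $E^c=\{0\}$ via the $n=0$ entry, and $E^c=\{0\}$ iff expansive via Lemma~\ref{ele}). No gaps; your filled-in version matches the paper's route.
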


Next we use $L_\infty$ to compute $Shad(L)$.
First we prove the following lemma.

\begin{lem}
\label{4abril}
Let $X$ be a Banach space and $L\in GL(X)$.
\begin{enumerate}
\item
If $c>0$ and $B(0,c)\subset L_\infty(B(0,1))$,
then $(1+\epsilon)c^{-1}$ is a shadowableness constant of $L$ for every $\epsilon>0$.
\item
If $K$ is a shadowableness constant of $L$, then
$B(0,K^{-1})\subset L_\infty(B(0,1))$.
\end{enumerate}
\end{lem}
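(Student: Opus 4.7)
The plan hinges on the observation that $L_\infty$ encodes the shadowableness recursion exactly: by the very definition of $L_\infty$, the identity $L_\infty(y)=z$ in $l^\infty(X)$ is the same as the system of equations $y_{n+1}=L(y_n)+z_n$, $n\in\mathbb{Z}$, that appears in the definition of a shadowableness constant. Hence $K$ is a shadowableness constant of $L$ if and only if every $z\in l^\infty(X)$ admits some $y\in l^\infty(X)$ with $L_\infty(y)=z$ and $\|y\|\leq K\|z\|$. With this translation both items of the lemma become elementary assertions about the image of the open unit ball of $l^\infty(X)$ under $L_\infty$.

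For Item (1) the plan is to argue by a one-line rescaling. Given a nonzero bounded sequence $z$, set $\alpha=c/((1+\epsilon)\|z\|)$, so that $\|\alpha z\|=c/(1+\epsilon)<c$ and therefore $\alpha z\in B(0,c)\subset L_\infty(B(0,1))$. This supplies some $\tilde y\in B(0,1)$ with $L_\infty(\tilde y)=\alpha z$, and by linearity of $L_\infty$ the sequence $y=\alpha^{-1}\tilde y$ satisfies both $L_\infty(y)=z$ and $\|y\|<\alpha^{-1}=(1+\epsilon)c^{-1}\|z\|$. This is exactly what is needed to conclude that $(1+\epsilon)c^{-1}$ is a shadowableness constant of $L$; the degenerate case $z=0$ is handled by $y=0$.

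For Item (2) the argument is essentially tautological once the translation above has been made. If $z\in B(0,K^{-1})$, then $\|z\|<K^{-1}$, and the hypothesis that $K$ is a shadowableness constant of $L$ furnishes some $y\in l^\infty(X)$ with $L_\infty(y)=z$ and $\|y\|\leq K\|z\|<1$. Thus $y\in B(0,1)$ and consequently $z\in L_\infty(B(0,1))$, proving the desired inclusion.

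I do not anticipate any deep obstacle; the whole proof is really the translation to the $L_\infty$ language. The only delicate point is bookkeeping of strict versus non-strict inequalities, since the notation $B(0,\cdot)$ refers to the \emph{open} ball: the extra slack provided by $\epsilon>0$ in Item (1) is precisely what lets me push $\alpha z$ strictly inside $B(0,c)$ while still retaining the clean bound $(1+\epsilon)c^{-1}\|z\|$ on $\|y\|$.
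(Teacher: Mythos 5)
Your proof is correct and follows essentially the same route as the paper's: the same translation of $L_\infty(y)=z$ into the recursion $y_{n+1}=L(y_n)+z_n$, the same rescaling by $c/((1+\epsilon)\|z\|)$ for Item (1), and the same direct verification for Item (2). Nothing to add.
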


\begin{proof}
To prove Item (1) we
fix $c>0$ such that
$B(0,c)\subset L_\infty(B(0,1))$ and $\epsilon>0$. 
We shall prove that $K=(1+\epsilon)c^{-1}$ is a shadowableness constant of $L$.
Take a bounded sequence $\xi=(z_n)_{n\in\mathbb{Z}}$ in $X$. Then, $\xi\in l^\infty(X)$ and we can assume $\xi\neq0$.
It follows that 
$$
\frac{c}{(1+\epsilon)\|\xi\|}\xi\in B(0,c)
$$
and then
$$
L_\infty(\eta)=\frac{c}{(1+\epsilon)\|\xi\|}
$$
for some $\eta\in B(0,1)$.
So,
$$
L_\infty\left(\frac{(1+\epsilon)\|\xi\|}{c}\eta\right)=\xi
$$
thus the sequence $(y_n)_{n\in\mathbb{Z}}=\frac{(1+\epsilon)\|\xi\|}{c}\eta$
satisfies
$y_{n+1}=L(y_n)+z_n$ for all $n\in\mathbb{Z}$
and
$$
\sup_{n\in\mathbb{Z}}\|y_n\|=\left\|\frac{(1+\epsilon)\|\xi\|}{c}\eta\right\|
\leq \frac{1+\epsilon}c\|\xi\|=K\sup_{n\in\mathbb{Z}}\|z_n\|.
$$
It follows that $K$ is a shadowableness constant proving Item (1).

To prove Item (2) fix a shadowableness constant $K$ of $L$ (notice that $K>0$).
Take
$\xi\in l^\infty(X)$ with $\|\xi\|<K^{-1}$.
Then,
$\xi=(z_n)_{n\in\mathbb{Z}}$ is bounded and so
there is a sequence $\eta=(y_n)_{n\in\mathbb{Z}}$ such that
$$
\|\eta\|=\sup_{n\in\mathbb{Z}}\|y_n\|\leq K\sup_{n\in\mathbb{Z}}\|z_n\|=K\|\xi\|<1
\mbox{ and }
y_{n+1}=L(y_n)+\xi_n,\quad\forall n\in\mathbb{Z}.
$$
Then,
$\eta\in B(0,1)$ and the equality above implies
$L_\infty(\eta)=\xi$.
Therefore, $B(0,K^{-1})\subset L_\infty(B(0,1)$ proving the result.
\qed
\end{proof}

\begin{theo}
\label{carrot}
For every Banach space $X$ one has
$$
Shad(L)=\inf\{c^{-1}:c>0\mbox{ and } B(0,c)\subset L_\infty(B(0,1))\},
\quad\forall L\in GL(X).
$$
\end{theo}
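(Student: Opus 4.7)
The plan is to deduce the equality directly from the two directions already encapsulated in Lemma \ref{4abril}, treating the finite and infinite cases separately.

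For the inequality $Shad(L)\le \inf\{c^{-1}:c>0,\ B(0,c)\subset L_\infty(B(0,1))\}$, I would fix any admissible $c>0$ from the right-hand side. By Lemma \ref{4abril}(1), for every $\epsilon>0$ the number $(1+\epsilon)c^{-1}$ is a shadowableness constant of $L$, so by the definition of $Shad(L)$ we have $Shad(L)\le (1+\epsilon)c^{-1}$. Letting $\epsilon\to 0^+$ gives $Shad(L)\le c^{-1}$, and taking the infimum over all admissible $c$ yields the desired bound.

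For the reverse inequality $\inf\{c^{-1}:\ldots\}\le Shad(L)$, I would take any shadowableness constant $K$ of $L$ (these exist exactly when $Shad(L)<\infty$, by Lemma \ref{l1-2}). By Lemma \ref{4abril}(2), $B(0,K^{-1})\subset L_\infty(B(0,1))$, so $c:=K^{-1}$ is admissible and hence $c^{-1}=K$ lies in the set on the right; thus $\inf\{c^{-1}:\ldots\}\le K$. Taking the infimum over all shadowableness constants $K$ of $L$ gives $\inf\{c^{-1}:\ldots\}\le Shad(L)$.

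Finally, I would dispatch the case $Shad(L)=\infty$, which by Lemma \ref{l1-2} corresponds to $L$ failing the shadowing property. If the set $\{c>0:B(0,c)\subset L_\infty(B(0,1))\}$ were nonempty, Lemma \ref{4abril}(1) would provide a finite shadowableness constant, contradicting $Shad(L)=\infty$; hence the set is empty and, under the convention $\inf\emptyset=\infty$, the right-hand side is also $\infty$. I do not anticipate any real obstacle here: the whole proof is a bookkeeping argument combining the two halves of Lemma \ref{4abril}, the only subtle point being the $(1+\epsilon)$-factor in part (1), which is precisely what forces the limit $\epsilon\to 0^+$ rather than allowing a direct identification of $c^{-1}$ as a shadowableness constant.
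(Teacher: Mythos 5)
Your proposal is correct and follows essentially the same route as the paper: both directions are obtained by feeding Lemma \ref{4abril}(1) and (2) into the definitions and letting $\epsilon\to 0^+$, with the infinite cases disposed of by the convention $\inf\emptyset=\infty$. The only cosmetic difference is that you fix an admissible $c$ and pass to the limit in $\epsilon$ before taking the infimum over $c$, while the paper chooses $c$ near the infimum first; the two orderings are equivalent.
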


\begin{proof}
Given $L\in GL(X)$ we have to prove $Shad(L)=I$, where
$$
I=\inf\{c^{-1}:c>0\mbox{ and } B(0,c)\subset L_\infty(B(0,1))\}.
$$
First we show
\begin{equation}
\label{pupa}
Shad(L)\leq I
\end{equation}
If $I=\infty$, then nothing to prove.
So, we can assume $I<\infty$.
Take $\epsilon>0$ and $c>(I+\epsilon)^{-1}$ such that
$B(0,c)\subset L_\infty(B(0,1))$.
It follows from Item (1) of Lemma \ref{4abril} that
$(1+\epsilon)c^{-1}$ is a shadowableness constant of $L$. Then,
$$
Shad(L)\leq (1+\epsilon)c^{-1}<(1+\epsilon)(I+\epsilon).
$$
Letting $\epsilon\to0$ we get
$Shad(L)\leq I$
proving \eqref{pupa}.

Next we prove
\begin{equation}
\label{papu}
I\leq Shad(L).
\end{equation}
Again if $Shad(L)=\infty$, then nothing to prove.
Take a shadowableness constant $K$ of $L$.
Then, $B(0,K^{-1})\subset L_\infty(B(0,1))$ by Item (2) of
Lemma \ref{4abril}
so $I\leq (K^{-1})^{-1}=K$ thus \eqref{papu} completing the proof.
\qed
\end{proof}

A direct corollary of the above theorem is given below.

\begin{cor}
\label{bangu}
If $X$ is a Banach space, then $L\in GL(X)$ has the shadowing property if and only if $L_\infty$ is onto.
\end{cor}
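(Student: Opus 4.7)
The plan is to reduce the corollary to Theorem \ref{carrot} combined with Lemma \ref{l1-2}, and then invoke the Open Mapping Theorem for the nontrivial direction. Recall that by Lemma \ref{l1-2}, $L$ has the shadowing property if and only if $Shad(L)<\infty$. So the statement reduces to showing that $Shad(L)<\infty$ if and only if $L_\infty:l^\infty(X)\to l^\infty(X)$ is surjective.

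For the forward implication, assume $Shad(L)<\infty$. Then by Theorem \ref{carrot} there exists $c>0$ with $B(0,c)\subset L_\infty(B(0,1))$. Given any nonzero $\xi\in l^\infty(X)$, the scaled vector $\frac{c}{2\|\xi\|}\xi$ lies in $B(0,c)$, so there is $\eta\in B(0,1)$ with $L_\infty(\eta)=\frac{c}{2\|\xi\|}\xi$. Linearity of $L_\infty$ then gives $L_\infty\bigl(\frac{2\|\xi\|}{c}\eta\bigr)=\xi$, showing that $L_\infty$ is onto.

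For the converse, suppose $L_\infty$ is surjective. Since $l^\infty(X)$ is a Banach space and $L_\infty$ is a bounded linear map (recall $\|L_\infty\|\leq 1+\|L\|$), the Open Mapping Theorem implies that $L_\infty$ is an open map. In particular, $L_\infty(B(0,1))$ is open and contains $L_\infty(0)=0$, so there exists $c>0$ with $B(0,c)\subset L_\infty(B(0,1))$. Theorem \ref{carrot} then yields $Shad(L)\leq c^{-1}<\infty$.

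The only potentially delicate point is making sure the Open Mapping Theorem applies, which requires $l^\infty(X)$ to be a Banach space and $L_\infty$ to be bounded and linear; both of these have been established in the preceding discussion. The rest is purely formal scaling. I would present the proof as two short paragraphs matching the two directions above.
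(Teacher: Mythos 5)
Your proof is correct and follows essentially the same route as the paper: the paper's argument is a compressed chain of equivalences whose one nontrivial link (surjectivity of $L_\infty$ implies $B(0,c)\subset L_\infty(B(0,1))$ for some $c>0$) is exactly the Open Mapping Theorem you invoke, combined with Theorem \ref{carrot} and the scaling argument for the converse. You have merely made explicit the details the paper leaves implicit.
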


\begin{proof}
$L_\infty$ is onto $\iff$ $\exists c>0$ such that $L_\infty(B(0,c))\subset L_\infty(B(0,1))$
$\iff$ the infimum in Theorem \ref{carrot} is finite $\iff$ $Shad(L)$ is finite $\iff$ $L$ has the shadowing property.
\qed
\end{proof}

Another interesting corollary is as follows.

\begin{cor}
\label{al}
If $X$ is a Banach space, then
$Shad(L)\geq \|L_\infty\|^{-1}$ (hence $Shad(L)\neq0$) for all $L\in GL(X)$.
\end{cor}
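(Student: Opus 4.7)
The plan is to derive the inequality directly from the characterization of $Shad(L)$ given by Theorem \ref{carrot}, namely
$$
Shad(L)=\inf\{c^{-1}:c>0\mbox{ and }B(0,c)\subset L_\infty(B(0,1))\}.
$$
By the established bound $\|L_\infty\|\leq 1+\|L\|<\infty$, it suffices to show that every $c>0$ with $B(0,c)\subset L_\infty(B(0,1))$ satisfies $c\leq\|L_\infty\|$; taking the infimum of $c^{-1}$ over such $c$ will then yield $Shad(L)\geq \|L_\infty\|^{-1}$, and positivity of $\|L_\infty\|^{-1}$ gives $Shad(L)\neq0$ (with the convention that if no such $c$ exists then $Shad(L)=\infty$ and the inequality is trivial).

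The first step is to fix an arbitrary $c>0$ with $B(0,c)\subset L_\infty(B(0,1))$ and pick any $\xi$ in the open ball $B(0,c)$. By the inclusion there is $\eta\in B(0,1)$ with $L_\infty(\eta)=\xi$, and then the operator bound yields
$$
\|\xi\|=\|L_\infty(\eta)\|\leq \|L_\infty\|\,\|\eta\|<\|L_\infty\|.
$$
The second step is to convert this pointwise estimate into a bound on $c$: since $\|\xi\|<\|L_\infty\|$ for every $\xi\in B(0,c)$, if one had $c>\|L_\infty\|$ one could choose $\xi\in B(0,c)$ with $\|\xi\|$ arbitrarily close to $c$ and in particular with $\|\xi\|\geq\|L_\infty\|$, contradicting the previous display. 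Hence $c\leq \|L_\infty\|$, and therefore $c^{-1}\geq\|L_\infty\|^{-1}$.

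Finally, taking the infimum over all admissible $c$ on the right-hand side of the Theorem \ref{carrot} identity gives $Shad(L)\geq \|L_\infty\|^{-1}>0$. I do not expect any real obstacle here; the only mildly delicate point is the passage from ``$\|\xi\|<\|L_\infty\|$ for every $\xi\in B(0,c)$'' to ``$c\leq\|L_\infty\|$'', which is handled by the standard observation that the norms of points in the open ball $B(0,c)$ have supremum exactly $c$ (assuming $X\neq\{0\}$; the case $X=\{0\}$ is vacuous).
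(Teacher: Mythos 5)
Your argument is correct and is essentially the paper's own proof: both derive $c\leq\|L_\infty\|$ from the inclusion $B(0,c)\subset L_\infty(B(0,1))$ (the paper phrases it as $\|L_\infty\|\geq\sup_{\eta\in B(0,c)}\|\eta\|=c$, you phrase it pointwise and then take a supremum) and then invoke Theorem \ref{carrot}. Your extra remarks on the degenerate cases are fine and do not change the substance.
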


\begin{proof}
If $c>0$ and $B(0,c)\subset L_\infty(B(0,1))$,
then
$$
\|L_\infty\|=\displaystyle\sup_{\|\xi\|\leq1}\|L_\infty(\xi)\|\geq \displaystyle\sup_{\eta\in B(0,c)}\|\eta\|=c
$$
so
$\|L_\infty\|^{-1}\leq c^{-1}$
thus Theorem \ref{carrot} applies.
\qed
\end{proof}

From this we get the following result from \cite{cgp}.

\begin{theo}
\label{ful}
The set of linear homeomorphism with the shadowing property of a Banach space is open with respect to the uniform topology.
\end{theo}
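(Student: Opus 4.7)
The plan is to exploit Corollary \ref{bangu}, which translates the shadowing property of $L$ into surjectivity of the bounded linear operator $L_\infty\colon l^\infty(X)\to l^\infty(X)$, and to combine it with the Lipschitz estimate $\|L_\infty-L'_\infty\|\le\|L-L'\|$ from \eqref{helado}. Fix $L\in GL(X)$ with the shadowing property. By Lemma \ref{l1-2} there is a shadowableness constant $K>0$, and item (2) of Lemma \ref{4abril} then gives $B(0,K^{-1})\subset L_\infty(B(0,1))$. A standard homogeneity/scaling argument produces a (not necessarily linear) selection $R\colon l^\infty(X)\to l^\infty(X)$ with $L_\infty\circ R=\mathrm{id}$ and $\|R(y)\|\le 2K\|y\|$ for every $y\in l^\infty(X)$.

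Next I would set $\delta=\min\{1/(4K),\,\|L^{-1}\|^{-1}/2\}$ and consider an arbitrary $L'\in GL(X)$ with $\|L-L'\|<\delta$; the second term in the minimum guarantees $L'\in GL(X)$ via a Neumann series applied to $I-L^{-1}(L-L')$, while the first ensures $r:=2K\|L_\infty-L'_\infty\|<1/2$. Given $y\in l^\infty(X)$, the idea is to solve $L'_\infty(x)=y$ by iteration: set $x_0=R(y)$ and $x_{n+1}=x_n+R(y-L'_\infty(x_n))$. Writing $e_n=y-L'_\infty(x_n)$ and using $L_\infty\circ R=\mathrm{id}$, a direct computation yields $e_{n+1}=-(L'_\infty-L_\infty)R(e_n)$, so both $\|e_n\|$ and $\|x_{n+1}-x_n\|\le 2K\|e_n\|$ decay geometrically with ratio $r$. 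Hence $(x_n)$ is Cauchy in the Banach space $l^\infty(X)$, its limit $x$ satisfies $L'_\infty(x)=y$, and Corollary \ref{bangu} then implies that $L'$ has the shadowing property.

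The main obstacle is conceptual rather than computational. Since $L_\infty$ is typically not injective (its kernel was computed earlier to be $\{(L^n(x))_{n\in\mathbb{Z}}:x\in E^c\}$, which is nontrivial whenever $L$ is not expansive), one cannot simply invert $L_\infty$ and run a Neumann series on an actual inverse. Instead one must commit to a single bounded-norm selection $R$ supplied by the open mapping theorem (packaged here through Lemma \ref{4abril}) and carry out the perturbation argument entirely at the level of surjectivity. Quantitatively the iteration in fact yields the explicit bound $Shad(L')\le 2K/(1-r)$, so that $Shad$ is not only finite but also locally bounded on the open neighborhood of $L$ produced.
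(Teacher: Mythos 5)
Your proposal is correct and follows essentially the same route as the paper: both reduce the statement, via Corollary \ref{bangu} and the Lipschitz estimate \eqref{helado}, to the openness of the set of surjective bounded operators on $l^\infty(X)$. The only difference is that the paper cites this openness as a known fact, whereas you prove it directly by the standard iteration with a bounded (nonlinear) right inverse obtained from Lemma \ref{4abril}, which as a bonus yields the quantitative local bound on $Shad(L')$.
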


\begin{proof}
Let $X$ a Banach space and $L\in GL(X)$ be with the shadowing property.
Then, $L_\infty:l^\infty(X)\to l^\infty(X)$ is onto by Corollary \ref{bangu}
and so there is $\epsilon>0$ such that
every bounded linear operator $\psi':l^\infty(X)\to l^\infty(X)$ with
$\|l_\infty-\psi'\|<\epsilon$ is onto.
Now take $L'\in GL(X)$ with $\|L-L'\|<\epsilon$.
Then, \eqref{helado} implies
$\|L_\infty-L'_\infty\|<\epsilon$ so $L_\infty'$ is onto thus
$L'$ has the shadowing property by Corollary \ref{bangu}.
This finishes the proof.
\qed
\end{proof}

Recall that a map of a metric space $f:X\to X$ is {\em transitive} if there is $x\in X$ such that
the sequence $(f^n(x))_{n\in \mathbb{N}}$ is dense in $X$.
Transitive linear operators are usually called {\em hypercyclic operators}.
To contrast with the above theorem we will present the proof in \cite{bm1} of the following result.

\begin{theo}
\label{pupa}
The set of transitive linear homeomorphisms of a Banach space has empty interior with respect to the uniform topology.
\end{theo}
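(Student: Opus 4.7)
The plan is to prove the stronger statement that the set of \emph{non}-hypercyclic operators is dense in $GL(X)$, so the set of hypercyclic operators has empty interior. The key tool is the lemma stated earlier in the excerpt: if $L\in Op(X)$ is hypercyclic, then $L^*$ has no eigenvalues. Consequently, it suffices to show that, given any $L\in GL(X)$ and any $\epsilon>0$, one can produce $L'\in GL(X)$ with $\|L-L'\|<\epsilon$ such that $(L')^*$ has an eigenvalue.

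The workhorse is a rank-one perturbation. Since $X$ is a complex Banach space, $\sigma(L^*)=\sigma(L)$ is a nonempty compact subset of $\mathbb{C}$, so $\partial\sigma(L^*)\neq\emptyset$. A standard fact gives $\partial\sigma(L^*)\subset\sigma_a(L^*)$, the approximate point spectrum (already exploited in Chapter~2 via Hedlund). Pick $\mu\in\partial\sigma(L^*)$; then there is a sequence $\varphi_n\in X^*$ with $\|\varphi_n\|=1$ and $\|L^*\varphi_n-\mu\varphi_n\|\to 0$. For each $n$, choose by Hahn--Banach a vector $z_n\in X$ with $\|z_n\|=1$ and $c_n:=\varphi_n(z_n)\geq 1/2$, and define
$$\psi_n:=\frac{\mu\varphi_n-L^*\varphi_n}{c_n}\in X^*, \qquad F_n(x):=\psi_n(x)\,z_n.$$
Then $F_n$ is a bounded linear operator of rank one with $\|F_n\|\leq \|\psi_n\|\,\|z_n\|\leq 2\|L^*\varphi_n-\mu\varphi_n\|\to 0$.

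A direct computation gives $F_n^*\varphi_n=\varphi_n(z_n)\psi_n=c_n\psi_n=\mu\varphi_n-L^*\varphi_n$, so
$$(L+F_n)^*\varphi_n = L^*\varphi_n + F_n^*\varphi_n = \mu\varphi_n,$$
i.e.\ $\mu$ is an eigenvalue of $(L+F_n)^*$. Since $GL(X)$ is open in $Op(X)$ and $\|F_n\|\to 0$, for all large $n$ we have $L+F_n\in GL(X)$ and $\|F_n\|<\epsilon$. By the contrapositive of the lemma, $L+F_n$ is not hypercyclic. Thus every $L\in GL(X)$ is a uniform limit of non-hypercyclic invertible operators, and the set of transitive linear homeomorphisms has empty interior.

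The only delicate point is making the rank-one perturbation genuinely small: the factor $1/c_n$ produced by solving $\varphi_n(z_n)\psi_n=\mu\varphi_n-L^*\varphi_n$ must not blow up. Choosing $z_n$ so that $\varphi_n(z_n)$ is bounded away from zero (which Hahn--Banach guarantees uniformly, since $\|\varphi_n\|=1$) is what makes the whole estimate collapse to the scale of $\|L^*\varphi_n-\mu\varphi_n\|$, which tends to $0$ because $\mu$ was chosen in $\sigma_a(L^*)$. Everything else---$F_n^*$ acting on $\varphi_n$, boundedness and invertibility of $L+F_n$---is a routine check.
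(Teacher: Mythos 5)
Your proof is correct and follows the same strategy as the paper: pick $\mu$ in the approximate point spectrum of $L^*$, perturb $L$ by a small rank-one operator so that $\mu$ becomes a genuine eigenvalue of the adjoint, and conclude non-transitivity from the fact (the Chapter~1 lemma) that the adjoint of a hypercyclic operator has no eigenvalues. The one genuine difference is in how the perturbation is realized: the paper first defines an operator $R$ on the dual $X^*$ and must then verify that $R$ is $w^*$-to-$w^*$ continuous in order to invoke the pre-adjoint theorem (Theorem 3.1.11 in Megginson) and obtain $Q:X\to X$ with $Q^*=R$. You instead write the rank-one operator $F_n(x)=\psi_n(x)z_n$ directly on $X$ and compute its adjoint by hand; this is the same operator (the paper's $Q$), but your route eliminates the pre-adjoint step entirely, which is a small but real simplification. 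One cosmetic slip: the existence of $z_n$ with $\|z_n\|=1$ and $\varphi_n(z_n)\ge 1/2$ is not Hahn--Banach (that would be needed for the reverse direction, norming a given vector); it is just the definition of $\|\varphi_n\|=\sup_{\|z\|=1}|\varphi_n(z)|=1$ together with multiplication by a unimodular scalar. Everything else, including the norm estimate $\|F_n\|\le 2\|L^*\varphi_n-\mu\varphi_n\|\to 0$ and the openness of $GL(X)$, is in order.
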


\begin{proof}
Let $X$ be a Banach space, $L\in GL(X)$ and $\epsilon$ be a positive number.
We shall find a nontransitive  $T\in GL(X)$ with $\|T-L\|<\epsilon$.
We can assume that every linear operator $T:X\to X$ with $\|T-L\|<\epsilon$ is a homeomorphism.

We know that the approximated point spectrum $\sigma_a(L^*)$ of the adjoint operator
$L^*:X^*\to X^*$ is never empty \cite{d}. So, we can choose $\lambda\in \sigma_a(L^*)$.
Then, there is $y^*\in X^*$ with $\|y^*\|=1$ such that
$\|L^*(y^*)-\lambda y^*\|<\frac{\epsilon}2$.
Choose $y\in X$ with $\|y\|=1$ such that
$|y^*(y)|>\frac{1}2$.
Define the map $R:X^*\to X^*$ by
$$
R(x^*)=\frac{x^*(y)}{y^*(y)}(\lambda y^*-L^*(y^*)),
\quad\quad x^*\in X^*.
$$
First we observe that $R$ is linear.
Also
$$
\|R(x^*)\|\leq 2\|\lambda y^*-L^*(y^*)\|\|y\|\|x^*\|
<\epsilon\|x^*\|
$$
proving that $R$ is bounded with $\|R\|<\epsilon$.
We also have that $R$ is w$^*$-to-$w^*$ continuous and then there is
a bounded linear operator $Q:X\to X$ such that $R=Q^*$ (see
Theorem 3.1.11 in \cite{m}).
It follows that $\|Q\|=\|Q^*\|=\|R\|<\epsilon$ and so
$T=L+Q\in GL(X)$.
Since
$$
T^*(y^*)=L^*(y^*)-Q^*(y^*)=L^*(y^*)+R(y^*)=L^*(y^*)+\lambda y^*-L(y^*)=\lambda y^*,
$$
we get $T^*(y^*)=\lambda y^*$.
Now suppose by contradiction that $T$ were transitive.
Then, there would exist $x\in X$ such that $(T^n(x))_{n\in \mathbb{N}}$ is dense in $X$.
Since $y^*\neq 0 $ (for $\|y^*\|=1$), $y^*$ is onto $\mathbb{C}$ and so
$(y^*(T^n(x)))_{n\in\mathbb{N}}$ would be dense in $\mathbb{C}$.
But
$y^*(T^n(x))=((T^*)^n(y^*))(x)=\lambda^n y^*(x)$ so the sequence $(y^*(T^n(x)))_{n\in\mathbb{N}}$ cannot be dense in $\mathbb{C}$. This is a contradiction which completes the proof.
\qed
\end{proof}

Summarizing, we have proved:
\begin{enumerate}
\item
Every $L\in GL(X)$ with the shadowing property is
{\em robustly shadowable} (i.e. every linear operator nearby $L$ with respect to the uniform topology
has the shadowing property).
\item
There are not
{\em robustly transitive linear homeomorphisms} (i.e. every linear homeomorphism can be approximated with respect to the uniform topology by a nontransitive operators).
\end{enumerate}
To complete the picture it is natural to ask if there are linear homeomorphisms which are {\em robustly expansive}
i.e. with the property that every nearby linear operator with respect to the uniform topology is expansive.
The similar notion of {\em robustly asymptotically expansive} can be considered too.
Indeed, such homeomorphisms exist e.g. the hyperbolic ones.
The question is if these are the sole ones namely,

\begin{ques}
Is every robustly expansive linear homeomorphism of a Banach space hyperbolic?
\end{ques}

\backmatter%%%%%%%%%%%%%%%%%%%%%%%%%%%%%%%%%%%%%%%%%%%%%%%%%%%%%%%
\printindex

%%%%%%%%%%%%%%%%%%%%%%%%%%%%%%%%%%%%%%%%%%%%%%%%%%%%%%%%%%%%%%%%%%%%%%

\end{document}